\newtheorem{lma}{Lemma}[section]
\newaliascnt{thmCt}{lma}
\newtheorem{thm}[thmCt]{Theorem}
\newaliascnt{corCt}{lma}
\newtheorem{cor}[corCt]{Corollary}
\newaliascnt{propCt}{lma}
\newtheorem{prop}[propCt]{Proposition}
\newtheorem*{thm*}{Theorem}
\newtheorem*{cor*}{Corollary}
\newtheorem*{prop*}{Proposition}
\theoremstyle{definition}
\newaliascnt{prgCt}{lma}
\newtheorem{prg}[prgCt]{}
\newaliascnt{dfnCt}{lma}
\newtheorem{dfn}[dfnCt]{Definition}
\newaliascnt{rmkCt}{lma}
\newtheorem{rmk}[rmkCt]{Remark}
\newaliascnt{rmksCt}{lma}
\newaliascnt{ntnCt}{lma}
\newtheorem{ntn}[ntnCt]{Notation}
\newaliascnt{ntnsCt}{lma}
\newaliascnt{qstCt}{lma}
\newtheorem{qst}[qstCt]{Question}
\newaliascnt{prblCt}{lma}
\newaliascnt{obsCt}{lma}
\newaliascnt{exaCt}{lma}
\newtheorem{exa}[exaCt]{Example}
\newaliascnt{exasCt}{lma}
\newcommand{\T}{\mathbb{T}}
\newcommand{\C}{\mathbb{C}}
\newcommand{\N}{\mathbb{N}}
\newcommand{\R}{\mathbb{R}}
\newcommand{\Z}{\mathbb{Z}}
\newcommand{\K}{\mathrm{K}}
\newcommand{\HH}{\mathrm{H}}
\DeclareMathOperator{\Mat}{Mat}
\DeclareMathOperator{\im}{im}
\DeclareMathOperator{\Th}{Th}
\DeclareMathOperator{\diag}{diag}
\DeclareMathOperator{\spectrum}{sp}
\newcommand{\CatCa}{\mathrm{C}^*}
\DeclareMathOperator{\Aff}{Aff}
\DeclareMathOperator{\Lat}{Lat}
\DeclareMathOperator{\AbGp}{AbGp}
\DeclareMathOperator{\F}{F}
\DeclareMathOperator{\Cu}{Cu}
\DeclareMathOperator{\AI}{AI}
\DeclareMathOperator{\A}{A}
\DeclareMathOperator{\UHF}{UHF}
\DeclareMathOperator{\AH}{AH}
\DeclareMathOperator{\Lsc}{Lsc}
\DeclareMathOperator{\ev}{ev}
\DeclareMathOperator{\Tr}{Tr}
\DeclareMathOperator{\Hom}{Hom}
\DeclareMathOperator{\id}{id}
\DeclareMathAlphabet{\mymathbb}{U}{bbold}{m}{n}
\begin{document}
\onehalfspacing
\title{On the Nielsen-Thomsen sequence}

\author{Laurent Cantier}
\email{lncantier@gmail.com}
\address{Laurent Cantier,\newline
Departamento de Matem\'aticas\\
Universidad de Zaragoza\\
C/Pedro Cerbuna 12\\
50009 Zaragoza\\
Spain}
\urladdr{www.laurentcantier.fr}

\thanks{\textit{Email address}: lncantier@gmail.com\\
The author was supported by the Spanish Ministry of Universities and the European Union-NextGenerationEU through a Margarita Salas grant and partially supported by MINECO (grant No. PID2023-147110NB-I00), and by the Departament de Recerca i Universitats de la Generalitat de Catalunya (grant No. 2021-SGR-01015).}

\keywords{Hausdorffized $\K_1$-group, de la Harpe-Skandalis determinant, Cuntz semigroup}

\begin{abstract} 
The Nielsen-Thomsen sequence plays a pivotal role in refining invariants for $\CatCa$-algebras beyond the Elliott classification framework. This paper revisits the sequence, introducing the concepts of \emph{Nielsen-Thomsen bases}, \emph{rotation maps} and \emph{diagonalisable morphisms}, to better understand its unnatural splitting. These insights enable novel comparison methods for *-homomorphisms at the level of the Hausdorffized algebraic $\K_1$-groups, and subsequently the Hausdorffized unitary Cuntz group. 

We apply our methods to classification via the Hausdorffized unitary Cuntz semigroup. In particular, we present a new proof of the non-isomorphism between two $\A\!\T$-algebras constructed by Gong, Jiang and Li. We also exhibit several pairs of non-unitarily equivalent *-homomorphisms with domain $C(\T)$.
\end{abstract}

\maketitle

%=============================================================
%=============================================================
%=============================================================
\section{Introduction}
%=============================================================
%=============================================================
The classification program for nuclear $\CatCa$-algebras has seen remarkable progress in the past decade, building on Elliott's groundbreaking conjecture that simple, separable, unital, nuclear $\CatCa$-algebras could be classified by $\K$-theoretic and tracial invariants. This vision was fully realized for the class of $\mathcal{Z}$-stable algebras satisfying the Universal Coefficient Theorem (UCT), through the combined efforts of many researchers, including the breakthrough works \cite{EGLN21} and \cite{GLN1,GLN2}. These developments culminated in the classification of simple, separable, unital, nuclear, $\mathcal{Z}$-stable $\CatCa$-algebras satisfying the UCT. See \cite{W18} for a general overview, and \cite{CGSTW21} for a remarkably detailed and innovative exposition on the matter. 
Nevertheless, classification outside the simple and $\mathcal{Z}$-stable setting presents additional challenges that have motivated the development of refined invariants. Already in early classification results, Nielsen and Thomsen presented a split-exact sequence, in the study and classification of *-homomorphisms between circle algebras. See in \cite{NT96}. This sequence, now referred to as the \emph{Nielsen-Thomsen sequence}, makes explicit the relationship between the Hausdorffized algebraic $\K_1$-group and the tracial state space, via the de la Harpe-Skandalis determinant. While their seminal work revealed its potential for distinguishing *-homomorphisms sharing identical Elliott invariants, several fundamental questions and applications, e.g. on the unnatural splitting of the sequence, have remained unexplored. 

The past ten years have seen significant advances in the classification of non-simple $\CatCa$-algebras. We recall Robert's classification of inductive limit of one-dimensional NCCW-complexes with trivial $\K_1$-groups, by means of the Cuntz semigroup. See \cite{R12}. Another approach began with classification of $\A\!\T$ algebras of real rank zero by means of the traditional Elliott invariant. See \cite{E93,EG96}. This second approach has been expanded over the years, resulting  in Gong, Jiang, and Li's classification of $\AH$ algebras with the ideal property, by means of a refined version of the Elliott invariant. See \cite{GJL20}. These works highlighted the crucial role played by the ideal structure and the Nielsen-Thomsen sequence for classification. Additionally, recent developments of the Cuntz semigroup and its refined unitary versions, have provided new perspectives on these challenges. See \cite{C23a,C23b,C25}.

In this paper, we explore the structural and internal properties of the Hausdorffized unitary Cuntz semigroup. Our investigation begins with a systematic study of the Nielsen-Thomsen sequence, to better understand the information encoded at the morphism level. To achieve this, we introduce new concepts such as Nielsen-Thomsen bases and rotation maps. These tools enable us to reinterpret the \textquoteleft unnatural\textquoteright\ splitting phenomenon, through matrix representations of *-homomorphisms at the level of the Hausdorffized $\K_1$-group. Our approach quantifies the effect of *-homomorphisms on given Nielsen-Thomsen bases, by measuring the disturbance caused by these maps. Consequently, we are able to construct  a metric $\mathfrak{d}$ to compare *-homomorphisms at the level of the Hausdorffized algebraic $\K_1$-group. While this framework provides a new perspective on distinguishing *-homomorphisms between $\CatCa$-algebras that agree on the traditional Elliott invariant, it also establishes, in turn, novel comparison methods for the Hausdorffized unitary Cuntz group and its morphisms. Finally, we apply our methods to distinguishing $\CatCa$-algebras and *-homomorphisms with domain $C(\T)$, by means of the Hausdorffized unitary Cuntz group. We gather all our results in the following theorem.

\begin{thm*}
(i) The (non-simple) $\A\!\T$-algebras constructed in \cite{GJL20} agree on the Elliott invariant, the Hausdorffized algebraic $\K_1$-group, the Cuntz semigroup and its unitary version. 

Yet, they are distinguished by the Hausdorffized unitary Cuntz semigroup.\\

(ii) There exists a family $\{\varphi_k\colon C(\T)\longrightarrow C[0,1]\otimes M_{2^\infty}\}_{k\in\N}$ of *-homomorphisms agreeing on the Elliott invariant, the Cuntz semigroup and its unitary version. 

Yet, we compute that $\mathfrak{d}(\overline{\K}_1(\varphi_k),\overline{\K}_1(\varphi_l))=\frac{\vert k-l\vert}{2}$ and hence, they are pairwise distinguished by the Hausdorffized algebraic $\K_1$-group.\\

(iii) There exist two *-homomorphisms $\varphi_u,\varphi_v\colon C(\T)\longrightarrow A$, where $A$ is an $\AI$-algebra, agreeing on the Elliott invariant, the Cuntz semigroup and its unitary version and the $\overline{\K}_1$-group.

Yet, they are distinguished by the Hausdorffized unitary Cuntz semigroup.
\end{thm*}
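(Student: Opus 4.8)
The plan is to treat all three statements as applications of the comparison machinery assembled above: the Nielsen--Thomsen basis of $\overline{\K}_1^{\alg}$, the rotation map $R_\varphi$ of a *-homomorphism $\varphi$, the induced matrix description of $\overline{\K}_1(\varphi)$ relative to chosen bases, the metric $\dd$ on such morphisms, and the functoriality of the Hausdorffized unitary Cuntz semigroup $\overline{\Cu}_1$. In each item the ``coarse'' agreement --- on the Elliott invariant, on $\Cu$, on $\Cu_1$, and where asserted on $\overline{\K}_1^{\alg}$ --- will be obtained either by quoting the relevant classification result or by reducing the *-homomorphism to diagonal form up to approximate unitary equivalence, as permitted by the results above on diagonalisable morphisms, whereas the distinction will always come from computing the off-diagonal rotation datum: the very piece of information hidden by the unnatural splitting of the Nielsen--Thomsen sequence and retained by $\dd$ and by $\overline{\Cu}_1$.

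For (i), I would write the two $\A\!\T$-algebras of \cite{GJL20} as inductive limits $A=\varinjlim A_n$ and $B=\varinjlim B_n$ of finite direct sums of building blocks $M_k(C(\T))$ with the ideal property and the connecting maps prescribed there; these two algebras are constructed so as to share the classical Elliott invariant while being non-isomorphic. Using the known description of $\Cu$, of $\Cu_1$ and of the Nielsen--Thomsen sequence for $\A\!\T$-algebras with the ideal property in terms of the graded ordered $\K$-groups, the ideal lattice, the traces and the de la Harpe--Skandalis determinant, together with the fact that $A$ and $B$ carry the same such data, I would deduce that they also agree on $\Cu$, on $\Cu_1$ and on $\overline{\K}_1^{\alg}$. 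The surplus of $\overline{\Cu}_1$ over $\Cu_1$ is exactly rotation-type data, now paired against the Cuntz classes of positive elements and not only against the class of the unit; the plan is then to read this datum off the connecting maps of the two inductive systems and to exhibit an element of $\overline{\Cu}_1(A)$ whose rotation value cannot be matched inside the corresponding component of $\overline{\Cu}_1(B)$. By the structural results on morphisms of $\overline{\Cu}_1$ proved above, this forces $\overline{\Cu}_1(A)\not\cong\overline{\Cu}_1(B)$, and in particular yields a new proof that $A\not\cong B$.

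For (ii) and (iii) I would use that a *-homomorphism with domain $C(\T)$ is nothing but a unitary $u=\varphi(z)$ in the codomain, and that up to the diagonal normal form such a unitary is determined, as far as $\Cu$ and $\Cu_1$ are concerned, by its spectral distribution alone. For (ii), take the codomain $D=C([0,1])\otimes M_{2^\infty}$; since $\K_1(D)=0$ every such $u$ lies in $U_0(D)$, and the de la Harpe--Skandalis determinant identifies the class of $\varphi(z)$ in $\overline{\K}_1^{\alg}(D)$ with an element of $\Aff T(D)\big/\overline{\rho(\K_0 D)}\cong C([0,1],\R)/\R$. I would then choose unitaries $u_k$, $k\in\N$, all with the same spectral distribution over $[0,1]$ and the same pairing with $\K$-theory and traces --- so that the Elliott invariant, $\Cu$ and $\Cu_1$ all coincide --- but with $\Delta(u_k)$ realising the class of $k\cdot g$ for a fixed function $g\in C([0,1],\R)$ scaled so that the disturbance it induces on the Nielsen--Thomsen basis has the right size; the formula for $\dd$ established above then gives $\dd(\overline{\K}_1(\varphi_k),\overline{\K}_1(\varphi_l))=\frac{|k-l|}{2}$ by a direct computation. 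For (iii), take $A$ an $\AI$-algebra whose tracial simplex is rich enough that the $\K_1$-level rotation data of the two maps is forced to agree --- so that neither the Elliott invariant, nor $\Cu$, nor $\Cu_1$, nor $\overline{\K}_1^{\alg}$ separates them --- and build $u=\varphi_u(z)$ and $v=\varphi_v(z)$ carrying the same $\overline{\K}_1$-picture but whose rotation values differ once they are paired against the Cuntz class of a suitable non-unit positive element; the diagonal normal form again secures the agreement on $\Cu$ and $\Cu_1$, while the structure theory of $\overline{\Cu}_1$ delivers the distinction.

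The main obstacle is the \emph{agreement} direction, not the distinction direction. In (i) one must verify that the \cite{GJL20} pair genuinely agrees on the unitary Cuntz semigroup $\Cu_1$, which is strictly finer than $\Cu$: this amounts to tracking the $\K_1$-information through both inductive systems and checking that the isomorphism of \cite{GJL20} at the level of the shared data is compatible with it. In (ii) and (iii) the delicate point is to arrange \emph{simultaneously} the same spectral distribution, the same $\K$-theory and trace pairing and the same $\Cu_1$-class while still forcing the determinant --- respectively the Cuntz-level rotation --- to take prescribed values; this is in essence a controlled lifting problem inside $U_0(D)\big/\overline{[U_0(D),U_0(D)]}$, to be solved by an explicit choice of eigenvalue functions. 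Once the examples are in place, the metric computation in (ii) and the non-isomorphism of the $\overline{\Cu}_1$-objects in (i) and (iii) follow readily from the rotation-map and diagonalisable-morphism results developed above.
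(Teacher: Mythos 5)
For parts (ii) and (iii) your route is essentially the paper's: the codomain's $\HH$-group is identified with $C([0,1],\R)/\{\text{constants}\}$ (quotient norm $\tfrac12(\max-\min)$), the paper takes $u_k=e^{2i\pi k\,\id_{[0,1]}}\otimes w$ so that $\overline{\Delta}(u_k)=[t\mapsto C+kt]$ and reads off $\mathfrak{d}=\tfrac{|k-l|}{2}$ exactly as you predict; in (iii) the $\overline{\K}_1$-agreement is arranged not by a richness-of-traces argument but simply by setting $v=e^{i\pi}u$ (the scalar only shifts the determinant by a constant, which dies in $\HH(A)$), and the distinction comes from restricting to the ideal of $C(\T)$ generated by the indicator of the arc $(1,e^{2i\pi/4})$, where the local determinants are $[4f]$ versus $[4g]$, so the fiber-diagram metric gives $\mathfrak{d}^*_{\Cu}\geq 1/8$. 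Be aware, though, that even here the agreement on $\Cu$ and $\Cu_{\K_1}$ is not extracted abstractly from ``same spectral distribution'': the paper proves $d_{\Cu}\leq 1/2^{n-1}$ at each finite stage, uses that the relevant ideals have trivial $\K_1$ so the fiber diagrams commute, and then passes to the limit; you flag this lifting/approximation issue but leave it unexecuted.

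The genuine gap is in (i), in the distinction step. Your plan is to ``exhibit an element of $\Cu_{\overline{\K}_1}(A)$ whose rotation value cannot be matched'' in $\Cu_{\overline{\K}_1}(B)$, but a single-element (or single-ideal) comparison cannot work: rotation values depend on the choice of Nielsen--Thomsen bases and, ideal by ideal, compatible sections exist on both sides, so any one value can be normalised away. The paper's obstruction is a coherence statement over the whole ideal lattice: any $\Cu^*$-isomorphism induces, by exactness and functoriality of the $\Cu_\K$-construction, a family of isomorphisms $\overline{\K}_1(\mathfrak{p}_j)\simeq\overline{\K}_1(\mathfrak{q}_j)$ commuting with $\overline{\K}_1$ of the inclusions for \emph{all} $j$ simultaneously; one must first prove that any scaled $\Cu$-isomorphism necessarily matches $\mathfrak{p}_j$ with $\mathfrak{q}_j$ (via $\K_0$ of the simple ideals, $\simeq\Z[1/p_j]$, and cancellation of compact elements), and the contradiction then comes from the dichotomy that the inclusions $\{\mathfrak{p}_j\hookrightarrow A\}_j$ are simultaneously Nielsen--Thomsen diagonalisable while the $\{\mathfrak{q}_j\hookrightarrow B\}_j$ are not --- the latter resting on the Gong--Jiang--Li estimate that for every section $s_1$ of $\overline{\K}_1(B)\twoheadrightarrow\K_1(B)$ there is $m$ with $\Vert\overline{\K}_1(\pi_m)\circ s_1(1_\Z)\Vert\geq 3$ (note that for a fixed $j$ no contradiction arises, since the needed $m$ may exceed $j$; the simultaneity is essential). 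Your proposal identifies neither this simultaneity obstruction nor the $\mathfrak{p}_j\mapsto\mathfrak{q}_j$ rigidity, and it also leaves the agreement of $\Cu$ and $\Cu_{\K_1}$ for the GJL pair (obtained in the paper by an approximate intertwining with $d^*_{\Cu}\leq 1/r_n$ at stage $n$) as an acknowledged but unproved step, so as it stands (i) is not established.
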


\textbf{Organization of the paper.}
%=============================================================
%=============================================================
The paper is organized as follows. Section 2 is devoted to variations of Cuntz semigroups and the comparison theory of their morphisms, building on recent developments in the theory of $\Cu$-semigroups. In Section 3, we introduce the necessary background on Nielsen-Thomsen bases and develop the theory of rotation maps for *-homomorphisms. This leads to the construction of a relevant metric to compare *-homomorphisms at the level of their Hausdorffized algebraic unitary group. Finally, in Section 4, we demonstrate the effectiveness of our methods by providing alternative proofs of the aforementioned classification results, illustrating how our framework naturally captures and explains these phenomena through the lens of the Nielsen-Thomsen sequence and its associated invariants.

\textbf{Acknowledgments.} The author would like to thank B. Jacelon for inspiring discussions around unitary elements of $\CatCa$-algebras and the de la Harpe-Skandalis determinant. Also, he is indebted to the referees that have helped to improve the quality and readability of the paper.

\section{Variations of the Cuntz semigroup}
\label{sec:Sec2}

\subsection{The Cuntz semigroup and its refined versions.}
The Cuntz semigroup was introduced in \cite{C78} and has been a powerful tool for classification since the milestone paper \cite{CEI08}. We refer the reader to \cite{GP26} for a detailed survey around the Cuntz semigroup, where they shall find all the basics they might need, and to \cite{APRT22} for state of the art results.

Let $(S,\leq)$ be an ordered monoid and let $x,y$ in $S$. We say that $x$ is \emph{way-below} $y$, and we write $x\ll y$, if for all increasing sequences $(z_n)_{n\in\N}$ in $S$, with supremum such that $\sup\limits_{n\in\N} z_n\geq y$, then there exists $k$ such that $z_k\geq x$. 

We say that a (positively) ordered monoid $S$ is an \emph{abstract Cuntz semigroup}, or a $\Cu$-semigroup, if $S$ satisfies the below axioms.

(O1) Every increasing sequence of elements in $S$ has a supremum. 

(O2) For any $x\in S$, there exists a $\ll$-increasing sequence $(x_n)_{n\in\N}$ in $S$ such that $\sup_{n} x_n= x$.

(O3) Addition and the compact containment relation are compatible.

(O4) Addition and suprema of increasing sequences are compatible.

We say that a map $\alpha:S\longrightarrow T$ is a $\Cu$-morphism, if $\alpha$ is an ordered monoid morphism preserving the compact-containment relation and suprema of increasing sequences.

The category $\Cu$ consists of $\Cu$-semigroups and $\Cu$-morphisms. Note that it is commonly assumed for a $\Cu$-semigroup to be positively ordered, which we do not assume here. 

\begin{dfn}[The Cuntz semigroup of a $\CatCa$-algebra] Let $A$ be a $\CatCa$-algebra and let $a,b\in A_+$. 

We say that $a$ \emph{Cuntz subequivalent} to $b$ in $A$, denoted $a \lesssim_{\Cu} b$, if for every $\epsilon > 0$,  there exists $ r\in A$ such that $\left\lVert rbr^*-a \right\rVert < \epsilon$. We antisymmetrize this subequivalence to obtain a an equivalence relation $\sim_{\Cu}$ called the \emph{Cuntz equivalence relation}.
The \emph{Cuntz semigroup} of $A$ is 
  \[
  \Cu(A):=((A \otimes \mathcal{K})_{+}/\sim_{\Cu},+,\leq)
  \]
where the addition is canonically defined and the order is induced by $\precsim_{\Cu}$.
\end{dfn}

It has been shown in \cite{CEI08} and \cite{APT18} that the functor $\Cu\colon \CatCa\longrightarrow \Cu$ is well-defined and continuous. 

\begin{prg}\textbf{The $\Cu_\K$-construction.} In \cite{C23b}, the author proposed a systematic construction for incorporating additional invariants into the Cuntz semigroup, resulting in refined versions that capture more structural information about $\CatCa$-algebras and their ideals. By leveraging categorical properties of the Cuntz semigroup and its target category $\Cu$, these refined invariants provide a robust context for classification. Here, we recall a process yielding two unitary variants of the Cuntz semigroup, and we refer the reader to \cite[Section 4.A]{C23b} for more details.

Let $A$ be $\CatCa$-algebra and let $\K\colon \CatCa\rightarrow \AbGp$ be a continuous functor. We view $\Cu(A)$ as a partially ordered set which induces a small category. (The objects are its elements and the morphisms are induced by the order.) Therefore, we can consider the functor 
\[
\begin{array}{ll}
	{\K_A}: \Cu(A)\longrightarrow \AbGp \\
	\hspace{1,6cm} x\longmapsto \K(I_x)\\
	\hspace{0,95cm}x\leq y\longmapsto \K(I_x\overset{\subseteq}\longrightarrow I_y)
\end{array}
\] 
where $I_x\in\Lat_f(A)$ is obtained through the isomorphism $\Lat_f(\Cu(A))\simeq \Lat_f(A)$. (See \cite[Proposition 5.1.10]{APT18} for the latter morphism.) To ease notations, we may write $\K(x)$ instead of $\K_A(x)$.

The \emph{$\Cu_\K$-construction of a $\CatCa$-algebra $A$} is an ordered monoid $\Cu_\K(A)$ consisting of pairs $(x,g)$, where $x\in \Cu(A)$ and $g\in \K(I_x)$. 
The addition and order are respectively given by 
\[
\left\{\begin{array}{ll}(x,g)+(y,h):=(x+y, \K(x\leq x+y)(g)+ \K(y\leq x+y)(h)) \text{ and}\\
(x,g)\leq(y,h), \text{ whenever } x\leq y \text{ in} \Cu(A) \text{ and }\K(x\leq y)(g)=h \text{ in } \K(y).  
\end{array}\right.
\]

The \emph{$\Cu_\K$-construction of a *-homomorphism $\phi\colon A\rightarrow B$} is an ordered monoid morphism $\Cu_\K(\phi)$ given by 
\[\Cu_\K(\phi)\colon(x,g)\mapsto (\Cu(\phi)(x),\K(I_x\overset{\phi}{\longrightarrow} ({I_x})_\phi)(g))\]
where $I_\phi\in \Lat(B)$ denotes the smallest ideal of $B$ containing $\phi(I)$. Therefore, $\Cu_\K(\phi)$ is entirely determined by the data $(\Cu(\phi),\{\overline{\K}_1(I\overset{\phi}{\longrightarrow} I_\phi)\}_{I\in\Lat_f(A)})$.
\end{prg}

We briefly recall some definitions and properties about $\Cu_\K$-constructions that will be of use later. Let us briefly mention that below, the category $\Cu^*$ is a subcategory of $\Cu$, except for the ordered monoids involved (which we recall, need not be positively ordered) satisfy extra axioms ensuring enough \textquoteleft positivity\textquoteright\ in the monoid. In particular any positively ordered $\Cu$-semigroup belongs to $\Cu^*$. It is of limited use to specify more on the matter and  we refer the reader to \cite{C23b} for details. 

\begin{thm}[{\cite[Theorems 4.2 - 4.5]{C23b}}]\label{thm:exactCUK} Let $\K\colon \CatCa\rightarrow \AbGp$ be a continuous functor.

(i) The assignment $\Cu_{\K}\colon \CatCa\longrightarrow \Cu^*$ is a well-defined continuous functor. 

(ii) Let $\phi\colon A\rightarrow B$ be a *-homomorphism between $\CatCa$-algebras and let  $I\in\Lat_f(A)$. 
Then for any $J\in\Lat(B)$ such that $J\supseteq I_\phi$, the following diagram is commutative with exact rows in $\Cu^*$
\[
D(I,\phi,J)\colon\xymatrix
{
0\ar[r]^{} & \Cu(I)\ar[d]_{\Cu(\phi)}\ar[r]^{i_1} & \Cu_{\K}(I) \ar[d] _{\Cu_{\K}(\phi)}\ar[r]^{j_1} & \K(I)\ar@/_{-1,2pc}/[l]^{q_1} \ar[d]^{\K(I\overset{\phi}{\rightarrow} I_\phi\subseteq J)}\ar[r]^{} & 0
\\
0\ar[r]^{} & \Cu(J)\ar[r]^{i_2} & \Cu_{\K}(J)\ar[r]^{j_2} & \K(J)\ar@/_{-1,2pc}/[l]^{q_2}\ar[r]^{} & 0
} 
\]
where $I_\phi$ is the smallest element of $\Lat(B)$ obtained by $I$ through $\phi$, i.e. $I_\phi:=\overline{B\phi(I)B}$.

(iii) The canonical diagram  $D(I,\phi,J)\overset{\iota}{\longrightarrow}D(A,\phi,B)$ is also commutative with exact rows.
\end{thm}

Lastly, let us introduce a couple of refined versions of the Cuntz semigroup obtained via the $\Cu_\K$-construction, applied to the unitary group $\K_1\colon \CatCa\rightarrow \AbGp$ and its Hausdorffized refinement $\overline{\K}_1\colon \CatCa\rightarrow \AbGp$. We refer the reader to \cite{C21a} and \cite[4.B - 4.11]{C23b} for more on these invariants.

\begin{dfn}[The unitary variants of the Cuntz semigroups of a $\CatCa$-algebra]\label{dfn:unitaryCu}  Let $A$ be a $\CatCa$-algebra. The above construction yields the following.

(i) The \emph{unitary Cuntz semigroup} of $A$, denoted by $\Cu_{\K_1}(A)$, is the merging of the Cuntz semigroup with the $\K_1$-group. 

(ii) The \emph{Hausdorffized unitary Cuntz semigroup} of $A$, denoted by $\Cu_{\overline{\K}_1}(A)$, is the merging of the Cuntz semigroup with the Hausdorffized algebraic $\K_1$-group.
\end{dfn}

We remark that the unitary Cuntz semigroup has been introduced and studied intensively by the author, and therebefore termed $\Cu_1$ instead of $\Cu_{\K_1}$. See \cite{C21a,C21b}. Originally, this invariant was intended to relax the trivial $\K_1$-assumption off of the classification achieved in \cite{R12}. While \cite{C23a} provided preliminary examples in this direction, \cite{C25} established that a Hausdorffized version was necessary, in hope of achieving a complete classification.

\begin{prg}\textbf{The category $\Cu_\mathfrak{w}$.}
The general procedure of merging a $\Cu$-semigroup $S$ with a functor $G\colon S\rightarrow \AbGp$ has been introduced in \cite{C23b} and referred to as a \textquoteleft webbing transformation\textquoteright. Roughly speaking, given $S,T$ and $\alpha_0\colon S\rightarrow T$ in the category $\Cu$, together with functors $G\colon S\rightarrow \AbGp,H\colon T\rightarrow \AbGp$ and a natural transformation $\eta\colon G\Rightarrow H\circ \alpha_0$, we obtain objects and morphisms in the category $\Cu^*$ in the following form
\[
S_G,T_H \quad\quad\text{ and }\quad\quad \alpha=(\alpha_0,\{\alpha_s\}_{s\in S})\colon S_G \rightarrow T_H
\]
where $\alpha_{s}\colon G(s)\rightarrow H(\alpha_0(s))$ is given by the natural transformation.

The webbing transformations form a subcategory $\Cu_\mathfrak{w}\subseteq\Cu^*\subseteq \Cu$ closed under inductive limits, which need not be full. Nevertheless, having classification in mind, it may be more adequate to consider the restriction $\Cu_\K\colon \CatCa\longrightarrow \Cu_\mathfrak{w}$ which is again a continuous functor, since any $\Cu_\K$-construction (of both $\CatCa$-algebras and *-homomorphisms) belongs to $\Cu_\mathfrak{w}$.
\end{prg}

\subsection{Comparison of Cu-morphisms and their refined versions}\label{subsection:2B} Classification of $\CatCa$-algebras may follow from classification of *-homomorphisms. In practice, this often requires methods to compare morphisms within the target category. 
In this part, we recall several comparison methods for $\Cu$-morphisms, and build upon them to exhibit such comparison methods for $\Cu_\mathfrak{w}$-morphisms between $\Cu_\K$-constructions. We highlight that all the subsequent can be done for broader webbing transformations and their morphisms, i.e. for the category $\Cu_\mathfrak{w}$.

Let $A,B$ be $\CatCa$-algebras and $\K\colon \CatCa\rightarrow \AbGp$ be a continuous functor. We consider  $\alpha:=(\alpha_0,\{\alpha_I\}_{I\in \Lat_f(A)})$ and $\beta:=(\beta_0,\{\beta_I\}_{I\in \Lat_f(B)})\colon \Cu_{\K}(A)\longrightarrow \Cu_{\K}(B)$ be $\Cu_\mathfrak{\K}$-morphisms. Note that $\alpha_0,\beta_0\colon \Cu(A)\rightarrow \Cu(B)$ are $\Cu$-morphisms and $\alpha_{I_x}\colon \K(x)\rightarrow \K(\alpha_0(x)), \beta_{I_x}\colon \K(x)\rightarrow \K(\beta_0(x))$ are group morphisms. We will refer to the above as the \emph{current setting}.

\begin{prg}\textbf{Finite-set comparison.} The first method is based on a finite-set comparison for $\Cu$-morphisms that we recall now. We refer the reader to \cite{C22,CV24} for more details. 

\begin{dfn}
Let $S,T$ be $\Cu$-semigroups and let $\alpha,\beta\colon S\rightarrow T$ be $\Cu$-morphisms. Let $F\subseteq S$ be a (finite) set. We say that $\alpha$ and $\beta$ \emph{compare on $F$}, and we write $\alpha\simeq_F\beta$,  if for any $f,g\in F$ such that $f\ll g$ (in $S$), we have that $\alpha(f)\leq \beta(g)$ and $\beta(f)\leq \alpha(g)$.
\end{dfn}

Observe that $\alpha=\beta$ if and only if $\alpha\simeq_F \beta$ for any finite set $F\subseteq S$ if and only if $\alpha\simeq_{\{f,g \mid f\ll g\}}\beta$. We now introduce the notion of fiber diagrams in the context of our current setting, allowing to make the finite-set comparison method specific for $\Cu_\mathfrak{w}$-morphisms between $\Cu_\K$-constructions.

\begin{dfn}[The fiber diagrams] Retain the current setting. Let $x,y\in \Cu(A)$ be such that $x\ll y$. We define the \emph{fiber diagram of $\{\alpha,\beta\}$ at coordinates $(x,y)$} as follows
\[
\xymatrix@!R@!C@R=15pt@C=30pt{
&\K(\alpha_0(y))\\
\K(\alpha_0(x))\ar[ur]^{\K(\leq)}\ar[dr]_{\K_\delta(\leq)}&\K(x)\ar[r]^{\!\!\!\!\!\!\!\!\!\!\beta_{I_x}}\ar[l]_{\quad\quad\alpha_{I_x}}&\K(\beta_0(x))\ar[dl]^{\K(\leq)}\ar[ul]_{\K_\delta(\leq)}
 \\
&\K(\beta_0(y))}  
\]
where the maps $\K_\delta(t\leq t')$ are defined to be $\K(t\leq t')$ for any $t,t'\in \Cu(B)$ such that $t\leq t'$, and the empty morphism (i.e., no arrows) otherwise.

We denote the above diagram by $\mathcal{F}_{\{\alpha,\beta\}}(x,y)$, or simply $\mathcal{F}(x,y)$, when the context is clear.
\end{dfn}

\begin{rmk}
The compositions $\K(\leq)\circ\alpha_{I_x}\colon \K(x)\rightarrow \K(\alpha_0(y))$ and $\K(\leq)\circ\beta_{I_x}\colon \K(x)\rightarrow \K(\beta_0(y))$ factorize through $\K(I_y)$. Hence the fiber diagram $\mathcal{F}(x,y)$ is equivalent to
\[
\xymatrix@!R@!C@R=25pt@C=30pt{
\K(\beta_0(x))\ar[d]_{\K_\delta(\leq)} & \K(x)\ar[l]_{\beta_{I_x}}\ar[r]^{\alpha_{I_x}}\ar[d]_{\K(\leq)} & \K(\alpha_0(x))\ar[d]^{\K_\delta(\leq)}\\
\K(\alpha_0(y)) & \K(y)\ar[l]^{\alpha_{I_y}}\ar[r]_{\beta_{I_y}} & \K(\beta_0(y))\\
}
\]
\end{rmk}

\begin{prop} Retain the current setting. Let $F\subseteq \Cu_\K(A)$ be a (finite) set. Let $F_0:=\{x\in \Cu(A)\mid (x,g)\in F, \text{ for some } g\in \K(x)\}$.

Then $\alpha\simeq_F\beta$ if and only if both of the the following conditions are satisfied.

(i) $\alpha_0\simeq_{F_0}\beta_0$

(ii) For any $x,y\in F_0$ such that $x\ll y$, and for any $g\in \K(x)$ such that $(x,g)\in F$, we have that 
\[
\left\{
\begin{array}{ll}
\K(\alpha_0(x)\leq\alpha_0(y))\circ\alpha_{I_x}(g)=\K(\beta_0(x)\leq\alpha_0(y))\circ\beta_{I_x}(g) \text{ in } \K(\alpha_0(y))\\
\K(\beta_0(x)\leq\beta_0(y))\circ\beta_{I_x}(g)=\K(\alpha_0(x)\leq\beta_0(y))\circ\alpha_{I_x}(g) \text{ in } \K(\beta_0(y))
\end{array}
\right..
\]
\end{prop}

\begin{proof}
This follows from the characterisation obtained \cite[Proposition 2.12]{C23b}, stating that $(x,g)\ll (y,h)$ if and only if $x\ll y \in S$ and $\K(x\leq y)(g)=h$. 
\end{proof}
\end{prg}

\begin{prg}\textbf{Metric comparison.} The second method is based on a metric comparison for $\Cu$-morphisms, sometimes referred to as \emph{$\Cu$-metrics}. 
Recall that whenever $X$ is a locally compact Hausdorff spaces of dimension at most one, it is known that $\Cu(C_0(X))\simeq \Lsc(X,\overline{\N})$. See \cite[Theorem 1]{R13}. More generally, it is known that $\Lsc(X,\overline{\N})$ is a $\Cu$-semigroup, for any compact Hausdorff space $X$, although it may differ from $\Cu(C(X))$. See \cite[Corollary~4.22]{V22} and \cite[Theorem~5.17]{APS11}. See also \cite{APS11,C22,C23a, R13} for more examples of Cuntz semigroups expressed as lower-semicontinuous functions.
 
For such $\Cu$-semigroups, we can construct the following $\Cu$-metric, which has been inspired by the works of \cite{CE08} and \cite{RS10}. We mention \cite{C22,C23a,C25} for an explicit use of this $\Cu$-metric, and \cite{C22,CV24} for more on $\Cu$-metrics. 
\begin{dfn}
Let $X$ be a (locally) compact Hausdorff space and let $T\in \Cu$. Consider $\Cu$-morphisms $\alpha, \beta\colon\Lsc(X,\overline{\N})\longrightarrow T$. We set 
\[  d_{\Cu}(\alpha,\beta):=\inf \Bigl\{ r>0\mid \forall  U\in\mathcal{O}(X),\, \alpha(\mymathbb{1}_{U})\leq\beta(\mymathbb{1}_{U_{r}})  \text{ and }  \beta(\mymathbb{1}_{U})\leq\alpha(\mymathbb{1}_{U_{r}})\Bigr\}
\] 
where $\mathcal{O}(X):=\{\text{Open sets of }X\}$ and $U_r:=\underset{x\in U}{\cup}B_r(x)$. If the infimum does not exist, we set the value to $\infty$.

This defines a metric on the set $\Hom_{\Cu}(\Lsc(X,\overline{\N}),T)$, referred to as the \emph{$\Cu$-metric}.
\end{dfn}

Let us retain the current setting. By a \emph{metric for the fiber diagrams of $(A,B)$}, we mean a metric $d$ on the set of fiber diagrams of $\{\alpha,\beta\}$, where $\alpha,\beta\in\Hom_{\Cu_\mathfrak{w}}(\Cu_\K(A),\Cu_\K(B))$. (Such metric always exists, e.g., take $d$ to be the trivial metric of the category $\AbGp$.)

\begin{dfn}[Norm of a fiber diagram] Retain the current setting. Let $d$ be a metric for the fiber diagrams of $(A,B)$. The \emph{norm of a fiber diagram $\mathcal{F}(x,y)$} associated to $d$ is defined by
\[
\Vert \mathcal{F}(x,y)\Vert_d:=\max_{(c,c')\in \Gamma}\{d(c,c')\}
\]
where $\Gamma$ is the set of pairs of composed morphisms in $\mathcal{F}(x,y)$ with same domain and codomain.
\end{dfn}

We are now ready to define several metrics to compare $\alpha$ and $\beta$ as follows. 

\begin{dfn}\label{dfn:metricK} Retain the current setting and assume that $\Cu(A)\simeq\Lsc(X,\overline{\N})$, for some locally compact Hausdorff space $X$. Let $d$ be a metric for the fiber diagrams of $(A,B)$. We consider
\[
d^*_{\Cu,d}(\alpha,\beta):=\inf_{} \Bigl\{r>\epsilon_0\mid \forall U\in \mathcal{O}(X), \, \Vert  \mathcal{F}(\mymathbb{1}_U,\mymathbb{1}_{U_r})\Vert_d\leq 4r  \Bigr\}
\]
where $\epsilon_0:=d_{\Cu}(\alpha_0,\beta_0)$.

This is a well-defined metric on the set $\Hom_{\Cu_{\mathfrak{w}}}(\Cu_\K(A),\Cu_\K(B))$. 
\end{dfn}

We can always choose $d=d_{\mathrm{triv}}$ to be the trivial metric, -i.e., the metric taking value $0$ whenever the morphisms are equal, and $\infty$ otherwise-. In that case, we may write $d^*_{\Cu}$ instead of $d^*_{\Cu,d_{\mathrm{triv}}}$. Also, the \textquoteleft factor 4\textquoteright\ appearing, is mostly for practical reasons, and is independent of the choice of $d$. See computations of \autoref{sec:Sec5}. 
Lastly, one could imagine a metric where the measures of the \textquoteleft$\Cu$-components\textquoteright\ and \textquoteleft the fiber diagrams components\textquoteright\ are uncorrelated but we do not pursue this idea here. We end the section with some useful computations.

\begin{prop}\label{prop:lwrbnd} Retain the current setting and assume that $\Cu(A)\simeq\Lsc(X,\overline{\N})$, for some locally compact Hausdorff space $X$. Let $d$ be a metric for the fiber diagrams of $(A,B)$.

(i) We have that $d_{\Cu}(\alpha_0,\beta_0)\leq d^*_{\Cu,d}(\alpha,\beta)$.

(ii) For any $x\in \Cu(A)$ and any $z\in \Cu(B)$ with $\alpha_0(x),\beta_0(x)\leq z$, we have that 
\[d(\K(\alpha_0(x)\leq z)\circ\alpha_I,\K(\beta_0(x)\leq z)\circ\beta_I)\leq 4d^*_{\Cu,d}(\alpha,\beta).\]  

(ii) Whenever $d:=d_{\mathrm{triv}}$ is the trivial metric, we have that
\[
d^*_{\Cu}(\alpha,\beta)=\inf_{} \Bigl\{r>\epsilon_0\mid \forall U\in \mathcal{O}(X),\,   \mathcal{F}(\mymathbb{1}_U,\mymathbb{1}_{U_r}) \text{ commutes}   \Bigr\}
\]
where $\epsilon_0:=d_{\Cu}(\alpha_0,\beta_0)$. 
\end{prop}

\begin{proof} All computations follow from construction and are left for the reader to check.
\end{proof}

\end{prg}

%=============================================================
\section{The Rotation map of *-homomorphisms}
\label{sec:Sec3}
In \cite{NT96}, Nielsen and Thomsen pointed out that the Hausdorffized algebraic $\K_1$-group contains information unseen by the Elliott invariant, and hence, is essential for classifying *-homomorphisms. They observed that this group splits between the $\K_1$-group and traces together with projections,  giving rise to the \emph{Nielsen-Thomsen sequence}. Notably, they showed that this sequence splits \textquoteleft unnaturally\textquoteright\ at the morphism level. In this section, we delve deeper into this phenomenon by introducing the notions of Nielsen-Thomsen bases, rotation maps and diagonalisable morphisms. As a consequence, we are then able to quantify how *-homomorphisms affect the splitting behavior via metrics that we construct. 

\subsection{Nielsen-Thomsen bases and the rotation map} Before diving into the matter, let us recall preliminary facts around the de la Harpe-Skandalis determinant and its link with the Nielsen-Thomsen sequence. Relevant references are \cite{dHS84,NT96,T95}. See also \cite[2.2]{S24}.

\begin{prg}\textbf{The de la Harpe-Skandalis determinant.}
For a unital $\CatCa$-algebra $A$, we denote the subgroup of $A$ generated by additive commutators $[a,b]:=ab-ba$ by $[A,A]$ and we define the \emph{universal trace on $A$} to be the quotient map $\Tr:A\longrightarrow A/\overline{[A,A]}$. We extend this (continuous) tracial linear map to $M_{\infty}(A)$ in a canonical way that we also denote $\Tr_A:M_{\infty}(A)\longrightarrow A/\overline{[A,A]}$. 

Following the seminal work of \cite{dHS84}, for any piece-wise smooth path $\xi:[0,1]\longrightarrow U_0^{\infty}(A)$, we define
\[
\widetilde{\Delta}_{}(\xi):=\frac{1}{2i\pi}\int_0^1 \Tr(\xi'(t)\xi^{-1}(t))dt.
\]
It is shown in \cite[Lemme 1]{dHS84}, that $\widetilde{\Delta}$ only depends on the homotopy class of $\xi$ and that its image belongs to the quotient $A_{\mathrm{sa}}/A_0$, where $A_0:=\{h\in A_{\mathrm{sa}} \mid \Tr_A(h)=0 \}$. On the other hand, it is well-known that $A_{\mathrm{sa}}/A_0$ can be identified with $\Aff T_1(A)$ by sending $[h]$ to $\mathrm{ev}_h\colon \tau\in T_1(A)\mapsto \tau(h)\in \R$, where $T_1(A)$ denotes the (bounded) tracial states of $A$.

Therefore, the application $\widetilde{\Delta}_{}$ naturally induces a group morphism $\underline{\Tr}_A:\K_0(A)\longrightarrow \Aff T_1(A)$ by sending $[p]\mapsto \mathrm{ev}_p$, for any projection $p\in M_{\infty}(A)$. (This is done via the identification $\K_0(A)\simeq \pi_1(U_0^{\infty}(A))$.) We now define the \emph{de la Harpe-Skandalis determinant} as
\[
\begin{array}{ll}
\Delta_{}:U_0^{\infty}(A)\longrightarrow \Aff T_1(A)/\im(\underline{\Tr}_A)\\
\hspace{1,55cm} u\longmapsto [\widetilde{\Delta}_{}(\xi_u)]
\end{array}
\]
where $\xi_u$ is any piece-wise smooth path connecting the identity to $u$. In particular, $\Delta_{}(e^{2i\pi h})=[\mathrm{ev}_h]$ for any self-adjoint $h\in M_{\infty}(A)$. 

Even though it was observed in the original work de la Harpe and Skandalis, that $\overline{\ker \Delta_{}}\subseteq \overline{DU_0^\infty(A)}$ (see \cite[Proposition 4]{dHS84}), there exist cases where  $DU_0^\infty(A)\subsetneq \ker \Delta\subsetneq \overline{DU_0^\infty(A)}$. A fortiori, the group morphism $\Delta$ is not continuous in general. However, Thomsen introduced a non-stable version of the determinant to obtain a continuous version. Let $\overline{\im (\underline{\Tr}_A)}$ denote the closure of the image of $\underline{\Tr}_A$ in $\Aff T_1(A)$. We now define the  \emph{non-stable de la Harpe-Skandalis determinant}
\[
\begin{array}{ll}
\overline{\Delta}_{}:U_0^{\infty}(A)\longrightarrow \Aff T_1(A)/\overline{\im (\underline{\Tr}_A)}\\
\hspace{1,78cm} u\longmapsto [\Delta_{}(u)]
\end{array}
\]
to be the composition of $\Delta_{}$ with the  projection of $\Aff T_1(A)/\im (\underline{\Tr}_A)\ \relbar\joinrel\twoheadrightarrow \Aff T_1(A)/\overline{\im (\underline{\Tr}_A)}$. It is shown in \cite[Theorem 3.2]{T95} that $\overline{\Delta}_{}$ is a surjective continuous group homomorphism satisfying that 
\[
U_0^\infty(A)/\overline{DU_0^\infty(A)}\overset{\overline{\Delta}_{}}{\simeq} \Aff T_1(A)/\overline{\im (\underline{\Tr}_A)}.
\]

For convenience, we may write $\mathrm{H}(A):=\Aff T_1(A)/\overline{\im (\underline{\Tr}_A)}$.\\
\end{prg}

\begin{prg}\textbf{The Nielsen-Thomsen sequence.}
Let $A$ be a unital $\CatCa$-algebra. It is known that $DU(A)\subseteq DU_0(M_3(A))\subseteq U^\infty_0(A)$. We deduce that $\overline{DU^\infty(A)}$ is a normal subgroup of $U^\infty_0(A)$ and we obtain a canonical extension 
\[
0\longrightarrow U^\infty_0(A)/\overline{DU^\infty(A)}\overset{i}{\longrightarrow} U^\infty(A)/\overline{DU^\infty(A)}\overset{\pi}{\longrightarrow} U^\infty(A)/\overline{U^\infty_0(A)}\longrightarrow 0
\]

The middle term is often referred to as the \emph{Hausdorffized algebraic $\K_1$-group} that we will denote by $\overline{\K}_1$ to ease the notations.

It is readily checked that $U^\infty_0(A)/\overline{DU^\infty(A)}$ is a divisible subgroup of $U^\infty(A)/\overline{DU^\infty(A)}$. Therefore, the latter extension splits.
Finally, by using the non-stable de la Harpe-Skandalis determinant, we obtain the extension referred to as the \emph{Nielsen-Thomsen sequence}
\[
0\longrightarrow \mathrm{H}(A)\overset{i}{\longrightarrow} \overline{\K}_1(A)\overset{\pi}{\longrightarrow}\K_1(A)\longrightarrow 0
\]
where $\mathrm{H}(A):=\Aff T_1(A)/\overline{\im (\underline{\Tr}_A)}$.

It is worth mentioning that under the stable rank one assumption, we may work in the $\CatCa$-algebra itself instead of matrices, since we have that $\mathrm{H}(A)\simeq U_0(A)/\overline{DU(A)}$, $\overline{\K}_1(A)\simeq U(A)/\overline{DU(A)}$ and $\K_1(A)\simeq U(A)/U_0(A)$. See \cite[Theorem 2.10]{R87} and \cite[Proposition 4]{LR95} together with \cite[Corollary 3.4]{T95}. See also \cite[Proposition 2.5]{C25}.

We have seen that the Hausdorffized algebraic $\K_1$-group of any $\CatCa$-algebra $A$ is entirely recovered by the original Elliott invariant, since we have an isomorphism $\overline{\K}_1(A)\simeq \mathrm{H}(A)\oplus \K_1(A)$.
  Nevertheless, Nielsen and Thomsen showed in \cite{NT96} that the sequence splits \textquoteleft unnaturally\textquoteright\, at the level of morphisms. In other words, given a *-homomorphism $\phi\colon A\longrightarrow B$, the data of $\mathrm{H}(\phi)$ and $\K_1(\phi)$, where $\mathrm{H}(\phi)\colon \Aff T_1(A)/\overline{\im (\underline{\Tr}_A)}\longrightarrow \Aff T_1(B)/\overline{\im (\underline{\Tr}_B)}$ is the induced morphism by $\phi$, is (in general) not enough to recover $\overline{\K}_1(\phi)$. We can restate the above as follows.

(i) For any $\CatCa$-algebra $A$, the group $\overline{\K}_1(A)$ is split. 

(ii) For any *-homomorphism $\phi\colon A\longrightarrow B$, the group morphism $\overline{\K}_1(\phi)$ need not be diagonalisable. 

As a conclusion, it is necessary to incorporate the functor $\overline{\K}_1$, to classify a larger class of *-homomorphisms and $\CatCa$-algebras, that cannot be classified via the original Elliott invariant. 
\end{prg}

\begin{prg}We shed new light on the unnaturalness of the Nielsen-Thomsen sequence through \emph{Nielsen-Thomsen bases}. These bases provide a systematic way to analyse the splitting behavior of the sequence at the morphism level. We are able to explicitly compute a matrix representation of *-homomorphisms at the level of the $\overline{\K}_1$-group in terms of their value on traces, $\K_1$-group and what we refer to as \emph{rotation maps with respect to the bases}. This approach not only clarifies why the sequence splits unnaturally, but also allows us to quantify and interpret the deviations caused by *-homomorphisms. 
\end{prg}

\begin{ntn} For any unitary element $u$ of a $\CatCa$-algebra $A$, we shall denote its respective unitary equivalence classes by $\overline{u}:=[u]_{\overline{\K}_1(A)}$ and $[u]:=[u]_{\K_1(A)}$.
\end{ntn}
\begin{dfn}[Nielsen-Thomsen bases]
Let $A$ be a unital $\CatCa$-algebra. A \emph{Nielsen-Thomsen basis of $A$} is a set $\{c_k \in U^\infty(A)\}_{k\in \K_1(A)}$ of elements in $U^\infty(A)$ indexed by $\K_1(A)$ such that the mapping 
\[
\begin{array}{ll}
s_A\colon \K_1(A)\longrightarrow \overline{\K}_1(A)\\
\hspace{1,45cm} k\longmapsto \overline{c_k}
\end{array}
\]
is a group morphism satisfying $\pi_A\circ s_A=\id_{\K_1(A)}$. By convention, we always fix $c_0:=1_A.$
\end{dfn}
The next proposition follows from standard results regarding split-extensions of abelian groups.
\begin{prop} Let $A$ be a unital $\CatCa$-algebra. Then

(i) $A$ admits Nielsen-Thomsen bases.

(ii) Any Nielsen-Thomsen basis $\mathcal{C}:=\{c_k\}_{k\in\K_1(A)}$ of $A$ induces a unique group isomorphism $\nu_\mathcal{C}\colon \overline{\K}_1(A)\simeq \mathrm{H}(A)\oplus\K_1(A)$ sending $\overline{u}\mapsto (\overline{\Delta}_{}(
\begin{smallmatrix} u & 0 \\
0 & c_{[u]}^*
\end{smallmatrix}
),[u])$. 

(ii') Any Nielsen-Thomsen basis of $A$ induces a unique retract $r\colon \overline{\K}_1(A)\longrightarrow \mathrm{H}(A)$.
\end{prop}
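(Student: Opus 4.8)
The plan is to derive all three statements from the splitting of the Nielsen--Thomsen sequence together with Thomsen's identification $U^\infty_0(A)/\overline{DU^\infty(A)}\overset{\overline{\Delta}}{\simeq}\mathrm{H}(A)$ recalled above, keeping in mind throughout that $\overline{\K}_1(A)=U^\infty(A)/\overline{DU^\infty(A)}$ is \emph{abelian} (since $\overline{DU^\infty(A)}$ contains the commutator subgroup), so that the group law on $\overline{\K}_1(A)$ is represented by block-diagonal sums of unitaries and $-\overline{v}=\overline{v^*}$.

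For (i), the sequence $0\to\mathrm{H}(A)\overset{i}{\to}\overline{\K}_1(A)\overset{\pi}{\to}\K_1(A)\to0$ splits because $\mathrm{H}(A)\simeq U^\infty_0(A)/\overline{DU^\infty(A)}$ is a divisible subgroup of $\overline{\K}_1(A)$, as noted above. I would fix a group-theoretic section $s\colon\K_1(A)\to\overline{\K}_1(A)$ of $\pi$ with $s(0)=0$; since the quotient map $U^\infty(A)\to\overline{\K}_1(A)$ is surjective, choose for each $k\in\K_1(A)$ a unitary $c_k\in U^\infty(A)$ with $\overline{c_k}=s(k)$, taking $c_0:=1_A$ (legitimate as $s(0)=0=\overline{1_A}$). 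Then $s_A(k):=\overline{c_k}=s(k)$ is a group morphism with $\pi_A\circ s_A=\id_{\K_1(A)}$, so $\{c_k\}_k$ is a Nielsen--Thomsen basis.

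For (ii), a basis $\mathcal{C}$ yields the section $s_\mathcal{C}:=s_A$. Since $i$ is injective with image $\ker\pi$ and $\pi\circ s_\mathcal{C}=\id$, one obtains the internal direct sum decomposition $\overline{\K}_1(A)=i(\mathrm{H}(A))\oplus s_\mathcal{C}(\K_1(A))$, whose associated isomorphism onto $\mathrm{H}(A)\oplus\K_1(A)$ is $g\mapsto(i^{-1}(g-s_\mathcal{C}(\pi(g))),\pi(g))$; this is the unique isomorphism with $\nu_\mathcal{C}\circ i$ equal to the inclusion of the first summand and $\nu_\mathcal{C}\circ s_\mathcal{C}$ equal to the inclusion of the second, hence it is canonically attached to $\mathcal{C}$. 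It then remains to match this with the stated formula. For $g=\overline{u}$ one has $\pi(g)=[u]$, and since $\pi_A\circ s_A=\id$ forces $[c_{[u]}]=[u]$, the element $g-s_\mathcal{C}([u])=\overline{u}-\overline{c_{[u]}}$ equals the class in $\overline{\K}_1(A)$ of the block unitary $\begin{smallmatrix} u&0\\ 0&c_{[u]}^*\end{smallmatrix}$, which has trivial $\K_1$-class, i.e.\ lies in $U^\infty_0(A)$, so $\overline{\Delta}$ is defined on it. By the very construction of the Nielsen--Thomsen sequence, the map $i$ is $(\overline{\Delta})^{-1}$ followed by the inclusion $U^\infty_0(A)/\overline{DU^\infty(A)}\hookrightarrow\overline{\K}_1(A)$; hence $i^{-1}(g-s_\mathcal{C}([u]))=\overline{\Delta}(\begin{smallmatrix} u&0\\ 0&c_{[u]}^*\end{smallmatrix})$, which is exactly the first coordinate in the statement. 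This simultaneously shows the formula is well defined on $\overline{\K}_1(A)$: another unitary lift $u'$ of $\overline{u}$ has $[u']=[u]$ and changes the block unitary only by a factor in $\overline{DU^\infty(A)}$, which $\overline{\Delta}$ annihilates; so $\nu_\mathcal{C}$ is the unique group isomorphism given by that formula.

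For (ii'), I would set $r:=\mathrm{pr}_{\mathrm{H}(A)}\circ\nu_\mathcal{C}$, that is $r(\overline{u})=\overline{\Delta}(\begin{smallmatrix} u&0\\ 0&c_{[u]}^*\end{smallmatrix})$; it is a group morphism, and $r\circ i=\id_{\mathrm{H}(A)}$ because $\nu_\mathcal{C}\circ i$ is the inclusion of the first summand, so $r$ is a retract of $i$, and it is the unique retract with kernel $s_\mathcal{C}(\K_1(A))$, hence canonically determined by $\mathcal{C}$. I do not expect a genuine obstacle: the only point requiring real care is the interface between the purely group-theoretic splitting and Thomsen's analytic isomorphism --- namely checking that the block-diagonal representative genuinely lies in $U^\infty_0(A)$ so that $\overline{\Delta}$ applies to it and computes $i^{-1}$, and that the resulting assignment does not depend on the chosen unitary lift of $\overline{u}$.
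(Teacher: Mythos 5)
Your proposal is correct and is essentially the argument the paper has in mind: the paper simply declares these to be standard facts about split extensions of abelian groups, and you carry out exactly that standard argument (divisibility of $\mathrm{H}(A)\simeq U_0^\infty(A)/\overline{DU^\infty(A)}$ gives the splitting, the internal direct sum attached to the section gives $\nu_\mathcal{C}$ and the retract, and the block-diagonal representative together with Thomsen's identification via $\overline{\Delta}$ yields the stated formula). The points you single out for care --- that $\mathrm{diag}(u,c_{[u]}^*)$ lies in $U_0^\infty(A)$ and that the value is independent of the unitary lift --- are indeed the only substantive checks, and you handle them correctly.
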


Conversely, it is readily checked that any group isomorphism $\nu\colon \overline{\K}_1(A)\simeq \mathrm{H}(A)\oplus\K_1(A)$ or equivalently, any retract $r\colon \overline{\K}_1(A)\longrightarrow \mathrm{H}(A)$
 of $i\colon \mathrm{H}(A)\longrightarrow \overline{\K}_1(A)$, induces a unique Nielsen-Thomsen basis.
 
\begin{dfn} Let $A,B$ be $\CatCa$-algebras and $\mathcal{C},\mathcal{D}$ be Nielsen-Thomsen bases of $A,B$. 

For any group morphism $\beta\colon \overline{\K}_1(A)\longrightarrow \overline{\K}_1(B)$, we define the \emph{matrix of $\beta$ in bases $\mathcal{C}\mathcal{D}$}, in symbols $\Mat_{\mathcal{C}\mathcal{D}}(\beta)$, to be the unique 2-by-2 matrix (with group morphisms entries) such that the following diagram commutes
\[
\xymatrix{
\overline{\K}_1(A)\ar[rr]^{\nu_{\mathcal{C}}}\ar[d]_{\beta}&&  \HH(A)\oplus \K_1(A)\ar[d]^{\Mat_{\mathcal{C}\mathcal{D}}(\beta)}\\
\overline{\K}_1(B)\ar[rr]_{\nu_{\mathcal{D}}}&&  \HH(B)\oplus\K_1(B)
}
\]
where $\nu_{\mathcal{C}},\nu_{\mathcal{D}}$ are the unique isomorphisms induced by the respective Nielsen-Thomsen bases. 

We say that $\beta$ is \emph{Nielsen-Thomsen diagonalisable} if, there exist Nielsen-Thomsen bases such that the matrix of $\beta$ is diagonal.
\end{dfn}

\begin{thm}[Rotation map]
Let $\phi\colon A\longrightarrow B$ be a *-homomorphism between unital $\CatCa$-algebras. Let $\mathcal{C}=\{c_k\}_{k\in\K_1(A)},\mathcal{D}=\{d_k\}_{k\in\K_1(B)}$ be Nielsen-Thomsen bases of $A,B$. 

Let $s_A,r_B$ be the respective section and retract induced by the bases. We compute that
\[
\begin{array}{ll} 
r_B\circ \overline{\K}_1(\phi)\circ s_A\colon \K_1(A)\longrightarrow \mathrm{H}(B)\\
\hspace{3,4cm}k\longmapsto \overline{\Delta}_{}
\begin{psmallmatrix} \phi(c_k) & 0 \\
0 & d_{\K_1(\phi)(k)}^*
\end{psmallmatrix}
\end{array}
\] 
which we write $R_{\mathcal{C}\mathcal{D}}(\phi)$ and refer to as the \emph{rotation map of $\phi$ with respect to the bases $\mathcal{C}\mathcal{D}$}.

Furthermore, we have that
\[
\Mat_{\mathcal{C}\mathcal{D}}(\overline{\K}_1(\phi))=
\begin{pmatrix}\mathrm{H}(\phi)  & R_{\mathcal{C}\mathcal{D}}(\phi)\\
0 & \K_1(\phi)
\end{pmatrix}
\]
\end{thm}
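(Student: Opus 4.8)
The plan is to unwind the definition of the matrix $\Mat_{\mathcal{C}\mathcal{D}}(\overline{\K}_1(\phi))$ by chasing the commutative square that defines it, identifying each of the four entries separately through the canonical isomorphisms $\nu_{\mathcal{C}}$ and $\nu_{\mathcal{D}}$. Writing an element of $\overline{\K}_1(A)$ as $\overline{u}$, we have $\nu_{\mathcal{C}}(\overline{u})=(\overline{\Delta}\left(\begin{smallmatrix} u & 0 \\ 0 & c_{[u]}^* \end{smallmatrix}\right),[u])$; the $(1,1)$-component of that pair is exactly $r_A(\overline{u})$ and the $(2,1)$-component is $\pi_A(\overline{u})=[u]$. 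So the matrix entries are obtained by restricting $\nu_{\mathcal{D}}\circ\overline{\K}_1(\phi)\circ\nu_{\mathcal{C}}^{-1}$ to the two summands $\HH(A)$ and $\K_1(A)$ and reading off the two components of the output.

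First I would handle the $\K_1$-row, which is the easy part: applying $\pi_B$ to $\overline{\K}_1(\phi)(\overline{u})$ gives $[\phi(u)]=\K_1(\phi)([u])$ by functoriality of $\pi$, so the $(2,1)$-entry is $\K_1(\phi)$; and since $i_B(\HH(B))=\ker\pi_B$, the bottom-left entry vanishes, giving the $(2,2)$-entry $=\K_1(\phi)$ and the $(2,1)$-entry (in the $\HH(A)\to\K_1(B)$ slot) equal to $0$. Next, the top-left entry: restrict to the summand $i_A(\HH(A))\subseteq\overline{\K}_1(A)$. An element here is $i_A([\ev_h])=\overline{e^{2i\pi h}}$ with $[e^{2i\pi h}]=0$, and $\overline{\K}_1(\phi)$ sends it to $\overline{\phi(e^{2i\pi h})}=\overline{e^{2i\pi\phi(h)}}$; applying $r_B$ (equivalently, composing with $\nu_{\mathcal{D}}$ and projecting to $\HH(B)$, noting $[\phi(e^{2i\pi h})]=0$ so the correcting factor $d_0^*=1_B$ drops out) yields $[\ev_{\phi(h)}]=\HH(\phi)([\ev_h])$. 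This identifies the $(1,1)$-entry as $\HH(\phi)$.

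The substantive computation is the $(1,2)$-entry, i.e. the rotation map. Here I would take $k\in\K_1(A)$, lift it via $s_A$ to $\overline{c_k}\in\overline{\K}_1(A)$, apply $\overline{\K}_1(\phi)$ to get $\overline{\phi(c_k)}$, and then apply $r_B$. By the formula for $\nu_{\mathcal{D}}$, the $\HH(B)$-component of $\nu_{\mathcal{D}}(\overline{\phi(c_k)})$ is $\overline{\Delta}\left(\begin{smallmatrix} \phi(c_k) & 0 \\ 0 & d_{[\phi(c_k)]}^* \end{smallmatrix}\right)$, and $[\phi(c_k)]=\K_1(\phi)([c_k])=\K_1(\phi)(k)$, so this is precisely $R_{\mathcal{C}\mathcal{D}}(\phi)(k)$ as claimed; this also verifies the displayed formula $r_B\circ\overline{\K}_1(\phi)\circ s_A = R_{\mathcal{C}\mathcal{D}}(\phi)$. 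Assembling the four entries into the $2\times2$ matrix gives the upper-triangular form, and this assembly is forced by the defining commutative square: once the action on each of the two summands is known, the matrix is uniquely determined.

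The main obstacle I anticipate is purely bookkeeping with the de la Harpe–Skandalis determinant: one must check that the block-diagonal tricks behave well under $\phi$ (e.g. $\overline{\Delta}\circ\text{(amplification by a }\phi\text{-image block)}$ is compatible with $\HH(\phi)$ and with relabelling the basis index by $\K_1(\phi)$), and that $r_B$ as defined via $\nu_{\mathcal{D}}$ genuinely agrees with the first-coordinate projection — both of which are straightforward from $\overline{\Delta}(e^{2i\pi h})=[\ev_h]$, the functoriality of $\overline{\Delta}$ under $\phi$, and the convention $c_0=d_0=1$. I would state these compatibilities as small observations before the four-entry computation so that each entry falls out cleanly, and relegate the genuinely routine verifications (well-definedness, independence of smooth-path choices) to references to \cite{dHS84,T95} and the preceding proposition.
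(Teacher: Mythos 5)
Your proposal is correct and follows essentially the same route as the paper: it exploits the naturality of the Nielsen--Thomsen sequence under $\phi$ (i.e.\ $\overline{\K}_1(\phi)\circ i_A=i_B\circ\HH(\phi)$ and $\pi_B\circ\overline{\K}_1(\phi)=\K_1(\phi)\circ\pi_A$) together with the defining formulas for $s_A$, $r_B$ and $\nu_{\mathcal{C}},\nu_{\mathcal{D}}$ to read off the four matrix entries, with the rotation entry computed exactly as in the paper via $[\phi(c_k)]=\K_1(\phi)(k)$. Your version is merely more element-wise (e.g.\ checking the $(1,1)$-entry on classes $\overline{e^{2i\pi h}}$, which indeed exhaust $i_A(\HH(A))$ since every class in $\Aff T_1(A)/\overline{\K_0(A)}$ is of the form $[\ev_h]$), where the paper argues directly from the naturality identities.
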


\begin{proof}
We know that $\phi$ induces a morphism $(\HH(\phi),\overline{\K}_1(\phi),\K_1(\phi))$ between the Nielsen-Thomsen sequences of $A$ and $B$. (Observe that this triple is a priori not compatible with the chosen splits.) This is equivalent to have that $\overline{\K}_1(\phi)\circ i_A=i_B\circ\HH(\phi)$ and $\K_1(\phi)\circ\pi_A=\pi_B\circ\overline{\K}_1(\phi)$. 

As a result, we obtain that
\[
\left\{\begin{array}{ll}
\HH(\phi)=r_B\circ i_B\circ\HH(\phi)=r_B\circ \overline{\K}_1(\phi)\circ i_A\\
\K_1(\phi)=\K_1(\phi)\circ \pi_A\circ s_A=\pi_B\circ\overline{\K}_1(\phi)\circ s_A
\end{array}
\right.
\]
from which the computations of the entries of $\Mat_{\mathcal{C}\mathcal{D}}(\overline{\K}_1(\phi))$ follow.
\end{proof}

\begin{rmk}
(i) The rotation map quantifies how $\phi$ \textquoteleft affects\textquoteright\ the Nielsen-Thomsen bases chosen.

(ii) It can be shown that rotation maps of $\phi$ with respect to any bases all agree on $\mathrm{Tor}(\K_1(A))$. However, we will be interested in $\A\!\T$-algebras, which have torsion-free $\K_1$-groups, and hence, we do not pursue these ideas here.
\end{rmk}

We obtain an immediate corollary from standard theory of split-extensions of abelian groups.

\begin{cor}\label{prop:commutsquare} Let $A,B$ be $\CatCa$-algebras. Let $\beta\colon \overline{\K}_1(A)\longrightarrow \overline{\K}_1(B)$ be a morphism. The following are equivalent

(i) $\beta$ is Nielsen-Thomsen diagonalisable.

(ii) There exist a section $s_A\colon \K_1(A)\rightarrow \overline{\K}_1(A)$ and a retract $r_B\colon \overline{\K}_1(B)\rightarrow \HH(B)$ such that $r_B\circ\beta\circ s_A =0$ and $\beta\circ i_A\subseteq \im(i_B)$.

If moreover $\beta=\overline{\K}_1(\phi)$, for some *-homomorphism $\phi\colon A\longrightarrow B$, the above are also equivalent to

(iii) The triple $(\HH(\phi),\overline{\K}_1(\phi),\K_1(\phi))$ is split.

(iv) There exist sections $s_A,s_B$ of $\pi_A,\pi_B$ respectively, such that the following square commutes.
\[
\xymatrix{
\overline{\K}_1(A)\ar[r]_{\pi_A}\ar[d]_{\overline{\K}_1(\phi)}& \K_1(A)\ar[d]^{\K_1(\phi)}\ar@/^{-1,2pc}/[l]_{s_A}\\
 \overline{\K}_1(B)\ar[r]^{\pi_b}& \K_1(B)\ar@/_{-1,2pc}/[l]^{s_B}}
\]
\end{cor}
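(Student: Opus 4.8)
The plan is to run the equivalences around the central notion that "diagonalisable = the extension morphism respects the splittings", translating freely between the three languages in play: Nielsen-Thomsen bases, sections/retracts of the sequence, and (2-by-2)-matrix representations. The backbone is the matrix formula from the preceding theorem: in bases $\mathcal{C}\mathcal{D}$ we have $\Mat_{\mathcal{C}\mathcal{D}}(\beta) = \begin{psmallmatrix} \HH(\phi) & R_{\mathcal{C}\mathcal{D}}(\phi) \\ 0 & \K_1(\phi) \end{psmallmatrix}$ when $\beta = \overline{\K}_1(\phi)$, and more generally (for an arbitrary morphism $\beta$) the matrix is upper-triangular precisely when $\beta\circ i_A \subseteq \im(i_B)$, with upper-right entry $r_B \circ \beta \circ s_A$. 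So "$\beta$ is diagonal in some bases" should be unwound to "the $(1,2)$-entry vanishes for some choice of bases" together with "$\beta$ maps $\im(i_A)$ into $\im(i_B)$", since the $(2,1)$-entry is automatically zero whenever $\beta$ descends to $\K_1$ (which, for a morphism of extensions, it does — but for a bare morphism $\beta\colon\overline{\K}_1(A)\to\overline{\K}_1(B)$ one must first observe that $\beta$ induces a map on $\K_1$, equivalently $\beta(\im i_A)\subseteq\im i_B$, which is exactly the second clause of (ii)).

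\emph{(i) $\Leftrightarrow$ (ii):} Given bases making $\Mat_{\mathcal{C}\mathcal{D}}(\beta)$ diagonal, read off directly that the off-diagonal entries vanish: the upper-right entry is $r_B\circ\beta\circ s_A$ (by the same bookkeeping as in the theorem's proof, with $\overline{\K}_1(\phi)$ replaced by $\beta$), which gives $r_B\circ\beta\circ s_A = 0$, while the lower-left entry being zero is equivalent to $\beta\circ i_A\subseteq\im(i_B)$ — this is what makes $\beta$ descend to $\K_1$ in the first place. Conversely, from the data in (ii), use the correspondence recorded just after Proposition (that retracts/sections correspond bijectively to Nielsen-Thomsen bases) to promote $s_A$ to a basis $\mathcal{C}$ of $A$ and $r_B$ to a basis $\mathcal{D}$ of $B$; then $\Mat_{\mathcal{C}\mathcal{D}}(\beta)$ is upper-triangular (by $\beta\circ i_A\subseteq\im i_B$) with zero upper-right entry (by $r_B\circ\beta\circ s_A = 0$), hence diagonal.

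\emph{(i)–(ii) $\Leftrightarrow$ (iii) $\Leftrightarrow$ (iv) when $\beta=\overline{\K}_1(\phi)$:} Here $\beta\circ i_A = i_B\circ\HH(\phi)\subseteq\im(i_B)$ holds automatically, so the second clause of (ii) is free and the content collapses to: there exist compatible splittings along which the off-diagonal rotation term dies. That the triple $(\HH(\phi),\overline{\K}_1(\phi),\K_1(\phi))$ is split (iii) means precisely that one can pick sections $s_A, s_B$ of $\pi_A,\pi_B$ with $\overline{\K}_1(\phi)\circ s_A = s_B\circ\K_1(\phi)$ — which is exactly the commuting square in (iv), so (iii) $\Leftrightarrow$ (iv) is a restatement. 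For (iii) $\Rightarrow$ (ii): given such $s_B$, let $r_B$ be the associated retract ($r_B = \nu_{\mathcal{D}}$'s first component for the basis $\mathcal{D}$ coming from $s_B$); then $r_B\circ i_B = \id$ and $r_B\circ s_B = 0$, so $r_B\circ\overline{\K}_1(\phi)\circ s_A = r_B\circ s_B\circ\K_1(\phi) = 0$. For (ii) $\Rightarrow$ (iii): given $s_A$ with retract $r_A$ of $i_A$, and $r_B$ with section $s_B$ of $\pi_B$, one checks $\overline{\K}_1(\phi)\circ s_A$ and $s_B\circ\K_1(\phi)$ agree after applying $\pi_B$ (both give $\K_1(\phi)$, using $\pi_A\circ s_A = \id$ and $\pi_B\circ\overline{\K}_1(\phi) = \K_1(\phi)\circ\pi_A$) and after applying $r_B$ (the first is $0$ by hypothesis, the second is $r_B\circ s_B\circ\K_1(\phi) = 0$); since $(r_B,\pi_B)$ jointly separates points of $\overline{\K}_1(B)$ along the split extension, the two maps coincide, giving (iv) hence (iii).

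The main obstacle I anticipate is purely bookkeeping rather than conceptual: keeping straight which clause of (ii) is automatic in the $\beta=\overline{\K}_1(\phi)$ case versus genuinely needed for an abstract $\beta$, and making sure the bijective correspondence "bases $\leftrightarrow$ sections $\leftrightarrow$ retracts" is invoked in the right direction each time (a section of $\pi_A$ lives on the $A$-side, a retract of $i_B$ on the $B$-side, and the matrix entry $r_B\circ\beta\circ s_A$ mixes the two). Once the dictionary is fixed, every implication is a one-line diagram chase using $\pi\circ s = \id$, $r\circ i = \id$, $r\circ s = 0$, and the two intertwining identities $\beta i_A = i_B\HH(\phi)$, $\K_1(\phi)\pi_A = \pi_B\beta$; no new ideas beyond the preceding theorem are required.
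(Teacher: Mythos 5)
Your proposal is correct, and it follows the route the paper intends: the paper's proof simply defers to ``standard results about split-extensions of abelian groups,'' and your argument supplies exactly that bookkeeping — identifying the matrix entries of $\beta$ in the bases induced by $s_A$ and $r_B$ (so that the $(1,2)$-entry is $r_B\circ\beta\circ s_A$ and the vanishing of the $(2,1)$-entry is $\beta\circ i_A\subseteq\im(i_B)=\ker\pi_B$), and then the one-line chases with $\pi\circ s=\id$, $r\circ i=\id$, $r\circ s=0$ and the naturality identities for $\overline{\K}_1(\phi)$. No gaps; your handling of which clause of (ii) becomes automatic when $\beta=\overline{\K}_1(\phi)$ is exactly the point the statement is making.
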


\begin{prg}\textbf{Application to unitary elements of a $\CatCa$-algebra.} Let $A$ be a unital $\CatCa$-algebra. It is well-known that its unitary elements are in bijective correspondence with $\Hom_{\CatCa,1}(C(\T),A)$ by sending $u\in U(A)$ to $\varphi_u\colon C(\T)\longrightarrow A$, where $\varphi_u(\id_\T)=u$. We also recall that $\K_1(C(\T))\simeq \Z$ is generated by $[\id_\T]=1_{\Z}$.

We consider the \emph{canonical Nielsen-Thomsen basis of $C(\T)$} to be the indexed set $\mathcal{C}_0:= \{c_k\}_{k\in\Z}$, where $c_0:=1_A$, and for any $k\geq 1$, we fix
\[
 c_k:=\underset{\text{k-times}}{\begin{psmallmatrix} \id_\T & & \\
&\ddots&\\
&&\id_\T
\end{psmallmatrix}}\quad\text{ and }\quad c_{-k}:=\underset{\text{k-times}}{\begin{psmallmatrix} \id_\T^* & & \\
&\ddots&\\
&&\id_\T^*
\end{psmallmatrix}}
.
\]
\begin{dfn} Let $A$ be a unital $\CatCa$-algebra. Fix a Nielsen-Thomsen basis $\mathcal{C}:=\{c_k\}_{k\in\K_1(A)}$  of $A$. We define the \emph{extended de la Harpe-Skandalis determinant with respect to $\mathcal{C}$} as

\[
\begin{array}{ll}
\overline{\Delta}_{\mathcal{C}}\colon U^\infty(A)\longrightarrow \HH(A) \\
\hspace{1,65cm}u\longmapsto \overline{\Delta}_{}(
\begin{smallmatrix} u & 0 \\
0 & c_{[u]}^*
\end{smallmatrix})
\end{array}.
\]
\end{dfn}

\begin{rmk} 
(i) Observe that $R_{\mathcal{C}_0\mathcal{C}}(\varphi_u)(1_\Z)=\overline{\Delta}_{\mathcal{C}}(u)$. In particular, we compute that 
\[
\Mat_{\mathcal{C}_0\mathcal{C}}(\overline{\K}_1(\varphi_u))=
\begin{pmatrix}\mathrm{H}(\varphi_u)  & 1_\Z \mapsto \overline{\Delta}_{\mathcal{C}}(u)\\
0 & 1_\Z \mapsto[u]
\end{pmatrix}.
\]

(ii) Whenever $u\in U^\infty_0(A)$, we recover the original determinant. That is, 
$\overline{\Delta}_{\mathcal{C}}(u)=\overline{\Delta}_{}(u)$.
\end{rmk}

The above yields an example of a *-homomorphism that is \emph{not} Nielsen-Thomsen diagonalisable.

\begin{exa}[Non-diagonalisable morphism] Let $u\in C([0,1])$ be the unitary element defined by $t\mapsto \exp(2i\pi t)$ and let $\varphi_u\colon C(\T)\rightarrow C([0,1])$ be its associated *-homomorphism. In other words, $\varphi_u$ is the *-homomorphism sending $\id_\T\mapsto u$, defined via functional calculus. 

Let $\mathcal{C}:=\{c_k\}_{k\in \Z}$ be a Nielsen-Thomsen basis for $C(\T)$. By the triviality of $\K_1(C([0,1]))$, there exists a unique Nielsen-Thomsen basis $\mathcal{D}_0$ of $C([0,1])$ given by $\{d_0:=1_{[0,1]}\}$. 

 We aim to prove  that the morphism $R_{\mathcal{C}\mathcal{D}_0}(\varphi_u)\colon \K_1(C(\T))\rightarrow \HH(C([0,1]))$ is non-trivial. First, it is known that $\K_1(C(\T))\simeq \Z$ and that $\HH(C([0,1]))\simeq C_\T([0,1])$. Therefore, it is enough to show that $R_{\mathcal{C}\mathcal{D}_0}(\varphi_u)(1_\Z)\neq 1_{[0,1]}$. 

We observe that $\varphi_u(c_1)=c_1\circ u$, which gives us that $R_{\mathcal{C}\mathcal{D}_0}(\varphi_u)(1_\Z)= \overline{\Delta}(c_1\circ u)$. Then on, it is readily checked that $\overline{\Delta}(c_1\circ u)=1_{[0,1]}$ if and only if $c_1\circ u=1_{[0,1]}$ if and only if $c_1=1_{\T}$, which in turn, implies that $[c_1]_{\K_1}=0_\Z$. Nevertheless, we have that $[c_1]_{\K_1}=1_\Z$ (by definition), from which we deduce that $R_{\mathcal{C}\mathcal{D}_0}(\varphi_u)(1_\Z)\neq 1_{[0,1]}$, for any Nielsen-Thomsen basis $\mathcal{C}$. We conclude that $\overline{\K}_1(\varphi_u)$ is not Nielsen-Thomsen diagonalisable.
\end{exa}
\end{prg}

\subsection{Comparison of *-homomorphisms at the level of the $\overline{\K}_1$-group} The category of abelian groups can be equipped with the discrete topology, which induces the trivial metric $d_{\mathrm{triv}}$ on the set of morphisms. 
While this metric is the finest possible (as it arises from the discrete topology), it is also somewhat \textquoteleft pathological\textquoteright, in the sense that it only distinguishes whether two group morphisms are the same or different. In what follows, we exploit the matrix representations in Nielsen-Thomsen bases to define a more informative, and thus more useful, metric $\mathfrak{d}$ that compares *-homomorphisms at the level of their $\overline{\K}_1$-groups. For the rest of the section, we consider the following setting.

Let $A,B$ be $\CatCa$-algebras. Let $\mathcal{C},\mathcal{D}$ be Nielsen-Thomsen bases of $A,B$ respectively. Let $\phi,\psi\colon A\longrightarrow B$ be *-homomorphisms. Recall that 
\[
\Mat_{\mathcal{C}\mathcal{D}}(\overline{\K}_1(\phi))=
\begin{pmatrix}\mathrm{H}(\phi)  & R_{\mathcal{C}\mathcal{D}}(\phi)\\
0 & \K_1(\phi)
\end{pmatrix} \quad\text{ and }\quad  \Mat_{\mathcal{C}\mathcal{D}}(\overline{\K}_1(\psi))=
\begin{pmatrix}\mathrm{H}(\psi)  & R_{\mathcal{C}\mathcal{D}}(\psi)\\
0 & \K_1(\psi)
\end{pmatrix}.
\]

We aim to construct a metric $\mathfrak{d}$ which compares $\overline{\K}_1(\phi)$ and $\overline{\K}_1(\psi)$, based on entry-wise comparison. A priori, such a comparison depends on the Nielsen-Thomsen bases involved. However, we will see that any Nielsen-Thomsen bases chosen yield equivalent metrics.
  
\begin{prg}\textbf{Comparison of $\HH(\phi)$ and $\HH(\psi)$.} As stated earlier, we freely identify $\Aff T_1(A)\simeq A_{\mathrm{sa}}/A_0$ as complete order unit spaces. Consequently, any element of $\Aff T_1(A)$ is of the form $\widehat{h}:=\ev_h$, where $h\in A_{\mathrm{sa}}$ and, its norm is given by $\Vert \widehat{h}\Vert= \sup_{\tau\in T_1(A)}\{\vert \tau(h)\vert\}$. Furthermore, $\HH(A)$ is equipped with the quotient norm given by $\Vert [\widehat{h}]\Vert:=\inf_{\widehat{p}\in \overline{\im (\underline{\Tr}_A)}}\{\Vert \widehat{h}-\widehat{p}\Vert\}$. As a result, we obtain the metrics 
\[
\left\{\begin{array}{ll}
\vspace{0,2cm}d(\Aff T_1(\phi),\Aff T_1(\psi))= \sup_{h\in A_{\mathrm{sa}}\cap A_1}\{\Vert\widehat{\phi(h)}- \widehat{\psi(h)}\Vert\} \\
d(\HH(\phi),\HH(\psi)):= \sup\limits_{h\in A_{\mathrm{sa}}\cap A_1}\{\Vert[\widehat{\phi(h)}]-[\widehat{\psi(h)}]\Vert\}
\end{array}
\right.
\]
where $A_1$ denotes the closed unit ball of $A$. Observe that
$d(\HH(\phi),\HH(\psi))\leq d(\Aff T_1(\phi),\Aff T_1(\psi))$.
\end{prg}

\begin{prg}\textbf{Comparison of $\K_1(\phi)$ and $\K_1(\psi)$.} We use the trivial metric on this entry. That is, 
\[
d_{\mathrm{triv}}(\K_1(\phi),\K_1(\psi))=\left\{ \begin{array}{ll} 0 \text{ whenever } \K_1(\phi)=\K_1(\psi)\\ \infty \text{ else } \end{array}\right..
\]
\end{prg}

\begin{prg}\textbf{Comparison of the rotation maps of $\phi$ and $\psi$.}
Recall that the rotation maps of $\phi$ and $\psi$ are group morphisms, relying on the Nielsen-Thomsen bases chosen. Explicitly, we have
\[
\begin{array}{ll}
R_{\mathcal{C}\mathcal{D}}(\phi)\colon\K_1(A)\longrightarrow \HH(B) \hspace{2,5cm}\text{ and }\hspace{1cm} R_{\mathcal{C}\mathcal{D}}(\psi)\colon\K_1(A)\longrightarrow \HH(B) \\
\hspace{2cm}k\longmapsto \overline{\Delta}_{}
\begin{psmallmatrix} \phi(c_k) & 0 \\
0 & d_{\K_1(\phi)(k)}^*
\end{psmallmatrix}
\hspace{4,5cm} k\longmapsto \overline{\Delta}_{}
\begin{psmallmatrix} \psi(c_k) & 0 \\
0 & d_{\K_1(\psi)(k)}^*
\end{psmallmatrix}
\end{array}.
\]

Recall that the above codomain is equipped with a metric. As a result, whenever $\K_1(A)$ is countably generated, we are able construct a metric to compare these maps. More concretely, we fix a countable generating set $\mathcal{S}\overset{\iota}{\simeq} \N$ and we define
\[
d_{\mathcal{S},\mathcal{C}\mathcal{D}}(\phi,\psi):=\sum_{k\in \mathcal{S}}\frac{\Vert R_{\mathcal{C}\mathcal{D}}(\phi)(k) - R_{\mathcal{C}\mathcal{D}}(\psi)(k)\Vert}{2^{\iota(k)}(1+\Vert R_{\mathcal{C}\mathcal{D}}(\phi)(k) - R_{\mathcal{C}\mathcal{D}}(\psi)(k)\Vert)
}.
\]

While it is true that any countable generating set induces the \emph{point-wise convergence topology}, it need not be true that they all generate equivalent metrics. A similar statement is true for different pairs of Nielsen-Thomsen bases. It worth mentioning that, whenever $\K_1(A)$ is finitely generated, any finite generating set $\mathcal{S}$ induces a metric equivalent to
\[
d_{\mathcal{S},\mathcal{C}\mathcal{D}}(\phi,\psi):=\max_{k\in \mathcal{S}}\Vert R_{\mathcal{C}\mathcal{D}}(\phi)(k) - R_{\mathcal{C}\mathcal{D}}(\psi)(k)\Vert.
\]

To ease notations, we may write $d_{\mathcal{C}\mathcal{D}}$, whenever $\K_1(A)$ is finitely generated.
\end{prg}

\begin{lma}\label{lma:key}
Let $A,B$ be $\CatCa$-algebras. Let $\mathcal{C}:=\{c_k\}_{k\in \K_1(A)},\mathcal{D}$ be Nielsen-Thomsen bases of $A,B$ respectively. Let $\phi,\psi\colon A\longrightarrow B$ be *-homomorphisms. Assume moreover that $K_1(\phi)=\K_1(\psi)$. 

For any $k\in\K_1(A)$, we compute
 \[
(R_{\mathcal{C}\mathcal{D}}(\phi)-R_{\mathcal{C}\mathcal{D}}(\psi))(k)=\overline{\Delta}_{}
\begin{psmallmatrix} \phi(c_k) & 0 \\
0 & \psi(c_k)^*
\end{psmallmatrix}.
\] 
In particular, the above morphism only depends on the basis chosen for $A$.
\end{lma}

\begin{proof}
Since we have $d_{\K_1(\phi)(k)}=d_{\K_1(\psi)(k)}$, the result follows from the fact that $\overline{\Delta}(u)+\overline{\Delta}(u^*)=0$.
\end{proof}

Gathering all the above, we are now ready to construct a (non-trivial) metric to compare the *-homomorphisms at the level of the Hausdorffized algebraic unitary group. Most importantly, we will see that the topology induced by this metric does not depend on the Nielsen-Thomsen bases.

\begin{dfn}\label{dfn:adequatemetric} Let $\phi,\psi\colon A\longrightarrow B$ be *-homomorphisms between $\CatCa$-algebras $A,B$. Let $\mathcal{C},\mathcal{D}$ be Nielsen-Thomsen bases of $A,B$ and let $\mathcal{S}$ be a generating set of $\K_1(A)$. We set
\[
\mathfrak{d}_{\mathcal{S},\mathcal{C}\mathcal{D}}(\overline{\K}_1(\phi),\overline{\K}_1(\psi)):= d(\HH(\phi),\HH(\psi))+d_{\mathcal{S},\mathcal{C}\mathcal{D}}(\phi,\psi)+d_{\mathrm{triv}}(\K_1(\phi),\K_1(\psi)).
\]

This is a well-defined metric on the set $\Hom(\overline{\K}_1(A),\overline{\K}_1(B))$. To ease notations, we may write $\mathfrak{d}_{\mathcal{C}\mathcal{D}}$, whenever $\K_1(A)$ is finitely generated.
\end{dfn}

\begin{thm}
Let $\mathcal{C},\mathcal{C}'$ be Nielsen-Thomsen bases of $A$ and $\mathcal{D},\mathcal{D}'$ be Nielsen-Thomsen bases of $B$. Assume that $\K_1(A)$ is finitely generated. 
For any *-homomorphisms $\phi,\psi\colon A\longrightarrow B$, we have
\[
\mathfrak{d}_{\mathcal{C}\mathcal{D}}(\overline{\K}_1(\phi),\overline{\K}_1(\psi))\leq 2\mathfrak{d}_{\mathcal{C}'\mathcal{D}'}(\overline{\K}_1(\phi),\overline{\K}_1(\psi)).
\]

In particular, $\mathfrak{d}_{\mathcal{C}\mathcal{D}}$ and $\mathfrak{d}_{\mathcal{C}'\mathcal{D}'}$ are equivalent metrics.
\end{thm}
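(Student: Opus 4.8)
The plan is to prove the inequality entry by entry, using the decomposition of $\mathfrak{d}_{\mathcal{C}\mathcal{D}}$ into its three summands. The first summand $d(\HH(\phi),\HH(\psi))$ and the last summand $d_{\mathrm{triv}}(\K_1(\phi),\K_1(\psi))$ do not involve the Nielsen-Thomsen bases at all, so they are literally the same on both sides of the inequality; all the work is in comparing $d_{R,\mathcal{C}\mathcal{D}}(\phi,\psi)$ with $d_{R,\mathcal{C}'\mathcal{D}'}(\phi,\psi)$. The first thing I would observe is that we may assume $\K_1(\phi)=\K_1(\psi)$: otherwise $d_{\mathrm{triv}}(\K_1(\phi),\K_1(\psi))=\infty$ and both sides are infinite, so the inequality holds trivially. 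Under that assumption, \autoref{lma:key}(i) tells us that $d_{R,\mathcal{C}\mathcal{D}}(\phi,\psi)$ actually only depends on the basis $\mathcal{C}$ of $A$ (not on $\mathcal{D}$), so it suffices to compare $d_{R,\mathcal{C}\mathcal{D}}$ with $d_{R,\mathcal{C}'\mathcal{D}'}$ via the intermediate quantity indexed by $(\mathcal{C}',\mathcal{D})$ or directly.

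Next I would invoke \autoref{lma:key}(ii), which gives the pointwise identity
\[
R_{\mathcal{C}\mathcal{D}}(\phi,\psi)(k)-R_{\mathcal{C}'\mathcal{D}'}(\phi,\psi)(k)=\widehat{\phi(h_k)}-\widehat{\psi(h_k)},
\]
where $\widehat{h_k}=\overline{\Delta}_{}\begin{psmallmatrix} c_k & 0 \\ 0 & (c_k')^*\end{psmallmatrix}\in\HH(A)$. Taking norms in $\HH(B)$ and using the triangle inequality,
\[
\Vert R_{\mathcal{C}\mathcal{D}}(\phi,\psi)(k)\Vert \leq \Vert R_{\mathcal{C}'\mathcal{D}'}(\phi,\psi)(k)\Vert + \Vert \widehat{\phi(h_k)}-\widehat{\psi(h_k)}\Vert.
\]
The second term on the right is bounded, uniformly in $k$, by $d(\HH(\phi),\HH(\psi))$ — this is exactly the supremum over self-adjoint elements of $A$ defining that metric, and $h_k$ (a representative of the class $\widehat{h_k}$) is such an element, so $\Vert\widehat{\phi(h_k)}-\widehat{\psi(h_k)}\Vert = d([\widehat{\phi(h_k)}],[\widehat{\psi(h_k)}]) \le d(\HH(\phi),\HH(\psi))$ after passing to the quotient metric on $\HH(B)$. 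Taking the supremum over $k\in\K_1(A)$ then yields
\[
d_{R,\mathcal{C}\mathcal{D}}(\phi,\psi)\leq d_{R,\mathcal{C}'\mathcal{D}'}(\phi,\psi)+d(\HH(\phi),\HH(\psi)).
\]

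Finally I would assemble the three pieces: adding $d(\HH(\phi),\HH(\psi))$ and $d_{\mathrm{triv}}(\K_1(\phi),\K_1(\psi))$ to both sides of the last inequality gives
\[
\mathfrak{d}_{\mathcal{C}\mathcal{D}}(\overline{\K}_1(\phi),\overline{\K}_1(\psi))\leq d_{R,\mathcal{C}'\mathcal{D}'}(\phi,\psi)+2d(\HH(\phi),\HH(\psi))+d_{\mathrm{triv}}(\K_1(\phi),\K_1(\psi))\leq 2\,\mathfrak{d}_{\mathcal{C}'\mathcal{D}'}(\overline{\K}_1(\phi),\overline{\K}_1(\psi)),
\]
since each of $d_{R,\mathcal{C}'\mathcal{D}'}$, $d(\HH(\phi),\HH(\psi))$, $d_{\mathrm{triv}}$ is nonnegative and $d(\HH(\phi),\HH(\psi))$ already appears in $\mathfrak{d}_{\mathcal{C}'\mathcal{D}'}$. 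Equivalence of the metrics follows by symmetry (swap the roles of the primed and unprimed bases to get the reverse inequality with the same constant $2$). The main subtlety to be careful about is the passage from the order-unit-space norm on $\Aff T_1(B)$ to the quotient metric on $\HH(B)=\Aff T_1(B)/\overline{\K_0(B)}$: one must check that $\Vert\widehat{\phi(h_k)}-\widehat{\psi(h_k)}\Vert$ in the statement of \autoref{lma:key}(ii) is really measured in the quotient (it is, since $\overline{\Delta}_{}$ lands in $\HH$), so that the bound by $d(\HH(\phi),\HH(\psi))$ is legitimate rather than only a bound by $d(\Aff T_1(\phi),\Aff T_1(\psi))$ — though even the cruder bound would still give equivalence, just with a worse constant, so this is not a serious obstacle.
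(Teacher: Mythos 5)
Your proposal is correct and follows essentially the same route as the paper: invoke \autoref{lma:key}(ii) pointwise, apply the triangle inequality to get $d_{R,\mathcal{C}\mathcal{D}}(\phi,\psi)\leq d_{R,\mathcal{C}'\mathcal{D}'}(\phi,\psi)+d(\HH(\phi),\HH(\psi))$, and then absorb the extra $d(\HH(\phi),\HH(\psi))$ term using that it is already a summand of $\mathfrak{d}_{\mathcal{C}'\mathcal{D}'}$. Your explicit reduction to the case $\K_1(\phi)=\K_1(\psi)$ and the remark about measuring the correction term in the quotient $\HH(B)$ rather than in $\Aff T_1(B)$ are minor refinements of the same argument, not a different proof.
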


\begin{proof}
If $\phi$ and $\psi$ disagree on the $\K_1$-group, both values are infinite and we are done. Otherwise, $d_{\mathrm{triv}}(\K_1(\phi),\K_1(\psi))=0$. In such case, fix $k\in \K_1(A)$. By \autoref{lma:key}, we compute that 
\begin{align*}
(R_{\mathcal{C}\mathcal{D}}(\phi)-R_{\mathcal{C}\mathcal{D}}(\psi))(k)-(R_{\mathcal{C}'\mathcal{D}'}(\phi)-R_{\mathcal{C}'\mathcal{D}'}(\psi))(k)&=\overline{\Delta}_{}
\begin{psmallmatrix} \phi(c_k) & 0 \\
0 & \psi(c_k)^*
\end{psmallmatrix} -
\overline{\Delta}_{}
\begin{psmallmatrix} \phi(c'_k) & 0 \\
0 & \psi(c'_k)^*
\end{psmallmatrix}\\
&=\overline{\Delta}_{}
\begin{psmallmatrix} \phi(c_k) & 0 \\
0 & \phi(c'_k)^*
\end{psmallmatrix} -
\overline{\Delta}_{}
\begin{psmallmatrix} \psi(c_k) & 0 \\
0 & \psi(c'_k)^*
\end{psmallmatrix}\\
&=\HH(\phi)(z_k)-\HH(\psi)(z_k)
\end{align*}
where $z_k:=\overline{\Delta}_{}
\begin{psmallmatrix} c_k & 0 \\
0 & (c'_k)^*
\end{psmallmatrix}\in\HH(A)$. 

Since $\K_1(A)$ is finitely generated, say by $\mathcal{S}$, we easily deduce that $d_{\mathcal{C}\mathcal{D}}(\phi,\psi)\leq (\HH(\phi)-\HH(\psi))(z_l) +(R_{\mathcal{C}'\mathcal{D}'}(\phi)-R_{\mathcal{C}'\mathcal{D}'}(\psi))(l)$, for some $l\in \mathcal{S}$. This yields the inequality \[d_{\mathcal{C}\mathcal{D}}(\phi,\psi)\leq d(\HH(\psi),\HH(\phi))+d_{\mathcal{C}'\mathcal{D}'}(\phi,\psi).\] The result now follows by adding $d_{\mathcal{C}'\mathcal{D}'}(\phi,\psi)$ on the right side of the inequality, and $d(\HH(\psi),\HH(\phi))$ on both sides.
\end{proof}

Even though the Nielsen-Thomsen bases involved will often be unambiguous, we may abusively denote any such (equivalent) metrics by $\mathfrak{d}$. In the last section of the manuscript, we exhibit examples satisfying that $0<\mathfrak{d}(\overline{\K}_1(\phi),\overline{\K}_1(\psi))<\infty$, showcasing the usefulness of $\mathfrak{d}$.

\begin{prg}\textbf{Application to the Hausdorffized unitary Cuntz semigroup.}\label{prg:metricsFinal} Applying the comparison methods obtained in \autoref{subsection:2B} with the above metrics, yields explicit ways to compare *-homomorphisms at the level of unitary Cuntz semigroups constructed in \autoref{dfn:unitaryCu}. 

Let $A,B$ be $\CatCa$-algebras and let $\phi,\psi\colon A\rightarrow B$ be *-homomorphisms. Assume moreover that $\Cu(A)\simeq \Lsc(X,\overline{\N})$ for some locally compact Hausdorff space $X$. From \autoref{dfn:metricK}, we obtain the following metrics.
\[
\left\{
\begin{array}{ll}
\vspace{0,2cm}d^*_{\Cu}(\Cu_{\K_1}(\phi),\Cu_{\K_1}(\psi)):=\inf_{} \Bigl\{r>\epsilon_0\mid \forall U\in \mathcal{O}(X),  \, \mathcal{F}(\mymathbb{1}_U,\mymathbb{1}_{U_r}) \text{ commutes}   \Bigr\}\\
\mathfrak{d}^*_{\Cu}(\Cu_{\overline{\K}_1}(\phi),\Cu_{\overline{\K}_1}(\psi)):=\inf_{} \Bigl\{r>\epsilon_0\mid  U\in \mathcal{O}(X), \, \Vert  \mathcal{F}(\mymathbb{1}_U,\mymathbb{1}_{U_r})\Vert_\mathfrak{d}\leq 4r \Bigr\}
\end{array}
\right.
\]
where $\epsilon_0:=d_{\Cu}(\alpha_0,\beta_0)$ and the fiber diagrams respectively involve $\K_1$-groups and $\overline{\K}_1$-groups. 

For notational purposes, we write $d^*_{\Cu}$ to mean $d^*_{\Cu,d_\mathrm{triv}}$. Similarly, we write $\mathfrak{d}^*_{\Cu}$ to mean $d^*_{\Cu,\mathfrak{d}}$. 
An analogous approach, using the finite-set comparison, can be done to remove the assumption on the Cuntz semigroup of $A$. More broadly, we can use the $\Cu$-metrics and the theory of comparison developed in \cite[Section 5]{CV24} to construct similar metrics. We do not pursue such developments here.

  We end this part by summarizing and relating all the comparisons of *-homomorphisms at the level of the Cuntz semigroup and its refined versions that we have considered so far. 
  
\begin{prop} Let $X$ be a locally compact Hausdorff space. Let $B$ be a $\CatCa$-algebra and let $\phi,\psi\colon C(X)\longrightarrow B$ be *-homomorphisms. We have
\begin{align*}
d_{\Cu}(\Cu(\phi),\Cu(\psi)) &\leq d^*_{\Cu}(\Cu_{\K_1}(\phi),\Cu_{\K_1}(\psi)) \\
&\leq \mathfrak{d}^*_{\Cu}(\Cu_{\overline{\K}_1}(\phi),\Cu_{\overline{\K}_1}(\psi))\\& \leq d^*_{\Cu}(\Cu_{\overline{\K}_1}(\phi),\Cu_{\overline{\K}_1}(\psi))\\
&\leq d_U(\phi,\psi).
\end{align*}
Here, $d_U(\phi,\psi):=\sup\limits_{F\subset{\rm Lip^1(X)}}\inf\limits_{w\in\mathcal{U}(A)}\sup\limits_{f\in F} \Vert w\phi(f)w^*-\psi(f)\Vert$ is the unitary distance between $\phi$ and $\psi$. See \cite[Definition 3.1]{JSV21}. 
\end{prop} 

\begin{proof} The result follows from the observations that 
$
d_{\mathrm{triv}}(\K_1(\phi),\K_1(\psi))\leq \mathfrak{d}(\overline{\K}_1(\phi),\overline{\K}_1(\psi))\leq d_{\mathrm{triv}}(\overline{\K}_1(\phi),\overline{\K}_1(\psi))
$
and that $d_{\mathrm{triv}}(\K_1(\phi),\K_1(\psi))=0$ if and only if $\mathfrak{d}(\overline{\K}_1(\phi),\overline{\K}_1(\psi)) <\infty$. 
\end{proof}
\end{prg}

\begin{prg}\textbf{Recovering tracial states from the Cuntz semigroup.} The relationship between the tracial states and the Cuntz semigroup of a $\CatCa$-algebra has a long history. See e.g. \cite{BPT08, C78, CP79, ERS11}. Based on \cite[Theorem 4.4]{ERS11}, it would seem that the Cuntz semigroup \textquoteleft contains\textquoteright\ the information of tracial states. However, we ought to make the latter statement more precise. As a first step, let us state the following theorem.
 
For the reader's convenience, we recall that a \emph{functional on a Cu-semigroup} is a monoid morphism  preserving suprema of increasing sequences, whose codomain is $\overline{\N}$. Also, any tracial state on a $\CatCa$-algebra $A$ uniquely corresponds to a functional on the Cuntz semigroup of $A$. See \cite{ERS11} for the explicit correspondance and more on the matter.

\begin{thm}\label{thm:Cufinernose}
Let $\phi,\psi\colon A\longrightarrow B$ be *-homomorphisms between $\CatCa$-algebras. Assume that $\phi$ and $\psi$ agree at the level of the Cuntz semigroup. 

Then $\Aff T_1(\phi)=\Aff T_1(\psi)$. A fortiori, we have $\HH(\phi)=\HH(\psi)$.
\end{thm}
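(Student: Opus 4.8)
The plan is to route everything through the functorial identification $\F(\Cu(\cdot))\simeq\mathrm{T}(\cdot)$ of \cite{ERS11} between functionals on the Cuntz semigroup and (densely defined, lower semicontinuous) traces. By hypothesis $\Cu(\phi)=\Cu(\psi)$ as $\Cu$-morphisms $\Cu(A)\to\Cu(B)$, so precomposing any functional $\lambda\colon\Cu(B)\to\overline{\R}_+$ with $\Cu(\phi)$ or with $\Cu(\psi)$ produces the same functional on $\Cu(A)$; that is, $\F(\Cu(\phi))=\F(\Cu(\psi))$. Transporting this equality through the equivalence of \cite{ERS11}, which is natural in the $\CatCa$-algebra, yields $\mathrm{T}(\phi)=\mathrm{T}(\psi)$ as maps $\mathrm{T}(B)\to\mathrm{T}(A)$; concretely $\tau\circ\phi=\tau\circ\psi$ for every trace $\tau$ on $B$, and in particular for every bounded tracial state $\tau\in T_1(B)$. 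Equivalently, and more elementarily, one can bypass \cite{ERS11}: for $a\in A_+$ use the standard formula $\tau(a)=\int_0^\infty d_\tau((a-t)_+)\,dt$, where $d_\tau$ is the dimension function associated with $\tau$ and depends only on the Cuntz class; since $[\phi((a-t)_+)]=\Cu(\phi)[(a-t)_+]=\Cu(\psi)[(a-t)_+]=[\psi((a-t)_+)]$ in $\Cu(B)$, we get $d_\tau((\phi(a)-t)_+)=d_\tau((\psi(a)-t)_+)$ for all $t$, and integrating in $t$ gives $\tau(\phi(a))=\tau(\psi(a))$.

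Next I would upgrade this to $\Aff T_1(\phi)=\Aff T_1(\psi)$. Given $h\in A_{\mathrm{sa}}$, write $h=h_+-h_-$ with $h_\pm\in A_+$; for any $\tau\in T_1(B)$ the previous step gives $\tau(\phi(h_\pm))=\tau(\psi(h_\pm))$, hence $\tau(\phi(h))=\tau(\psi(h))$, i.e. $\widehat{\phi(h)}=\widehat{\psi(h)}$ in $\Aff T_1(B)$. Since $\Aff T_1(\phi)$ is precisely the map $\widehat{h}\mapsto\widehat{\phi(h)}$ (and similarly for $\psi$) and $h\in A_{\mathrm{sa}}$ was arbitrary, this is exactly the assertion $\Aff T_1(\phi)=\Aff T_1(\psi)$.

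Finally, $\HH$ is obtained from $\Aff T_1$ by passing to the quotient by $\overline{\K_0(\cdot)}$: for any *-homomorphism $\chi\colon A\to B$ one has $\Aff T_1(\chi)(\overline{\K_0(A)})\subseteq\overline{\K_0(B)}$ by functoriality of $\underline{\Tr}$ together with continuity (since $\widehat{\chi(p)}=\underline{\Tr}_B(\K_0(\chi)[p])$ for a projection $p$), and $\HH(\chi)$ is the induced map on quotients. Applying this to $\chi=\phi$ and $\chi=\psi$ and invoking $\Aff T_1(\phi)=\Aff T_1(\psi)$ gives $\HH(\phi)=\HH(\psi)$, as claimed.

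The step requiring the most care is the first one: one must pin down the contravariant direction and the explicit form of the equivalence of \cite{ERS11} — namely that a trace $\tau$ corresponds to the functional $[a]\mapsto d_\tau(a):=\lim_n\tau(a^{1/n})$ and that $d_{\tau\circ\chi}=d_\tau\circ\Cu(\chi)$ — and then check that equality of the induced maps on the full cone of lower semicontinuous traces restricts to the subcone of bounded tracial states underlying $\Aff T_1$. The self-contained integral argument avoids this unbounded-trace bookkeeping altogether and is arguably the cleanest route; the remaining verifications are routine.
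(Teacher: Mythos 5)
Your proposal is correct and follows essentially the same route as the paper: the paper's proof is exactly your ``self-contained integral argument,'' using the correspondence of \cite{ERS11} between tracial states and functionals together with the formula $\tau(\phi(a))=\int_0^{\infty}\lambda_\tau([(\phi(a)-t)_+])\,dt$, the compatibility of *-homomorphisms with functional calculus, and $\Cu(\phi)=\Cu(\psi)$. Your additional steps (extending from $A_+$ to $A_{\mathrm{sa}}$ via $h=h_+-h_-$ and passing to the quotient by $\overline{\K_0(\cdot)}$ to get $\HH(\phi)=\HH(\psi)$) are simply the routine verifications the paper leaves implicit.
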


\begin{proof}
Let $\tau\in T_1(A)$. By \cite[Theorem 4.4]{ERS11}, $\tau$ induces a unique functional $\lambda_\tau$ on $\Cu(A)$. For any $a\in A_+$, we compute the following.
\begin{align*}
\tau(\phi(a))&=\int_0^{\infty} \lambda_\tau([(\phi(a)-t)_+])dt=\int_0^{\infty} \lambda_\tau([\phi((a-t)_+)])dt\\
&=\int_0^{\infty} \lambda_\tau\circ\Cu(\phi)([(a-t)_+])dt=\int_0^{\infty} \lambda_\tau\circ\Cu(\psi)([(a-t)_+])dt\\
&=\tau(\psi(a)).
\end{align*}

We have used the formula of \cite[Proposition 4.2]{ERS11} at the first and last equality. The second equality follows from the fact that *-homomorphisms preserve continuous functional calculus, while the fourth equality follows from the hypothesis at hand.
\end{proof}

In the aim of simplifying the comparison of *-homomorphisms at the level of the Hausdorffized unitary Cuntz semigroup, given by the metric $\mathfrak{d}^*_{\Cu}$, we wonder whether an approximate version of the above theorem remains true. Concretely, we ask the following.

\begin{qst} Retain the setting of the latter theorem. Is there a $\Cu$-metric $d_{\Cu}$ (together with a positive constant $C>0$) satisfying that $d(\Aff T_1(\phi),\Aff T_1(\psi))\leq C d_{\Cu}(\Cu(\phi),\Cu(\psi))$?

In a weaker fashion, can we obtain that $d(\HH(\phi),\HH(\psi))\leq C d_{\Cu}(\Cu(\phi),\Cu(\psi))$?
\end{qst}

In the particular case where $A=C([0,1])$ and $B$ is a $\CatCa$-algebra of stable rank one, both of the above have an affirmative answer, following from the classification obtained in \cite{R12}. It is worth mentioning that a theory of $\Cu$-metrics has been developed in \cite[Section 5]{CV24} and that this line of investigation is part of an on-going research. See \cite{C26}.
\end{prg}

\section{Distinguishing \texorpdfstring{$\CatCa$}{C*}-algebras and *-homomorphisms}\label{sec:Sec5}
In this part, we apply our comparison methods, previously set, to distinguish concrete examples of $\CatCa$-algebras and *-homomorphisms that neither the original Elliott invariant nor the (unitary) Cuntz semigroup are able to distinguish. We remark that these examples deals with $\A\!\T$-algebras and their morphisms. They highlight the neatness and applicability of our methods to classification problems.

\subsection{The Gong-Jiang-Li example revisited}
In \cite{GJL20}, the authors have built (non-simple) $\A\!\T$-algebras $A$ and $B$ that are distinguished by a refined version of the Elliott invariant. More particularly, these $\CatCa$-algebras differ from an internal property, called \emph{uniformly varied determinants} of connecting maps, which in turn implies a system of splitting between specific corner algebras. In the original manuscript, both the invariant and the distinguishing arguments are quite long to unravel. Also, it is stated without any proof that these $\CatCa$-algebras cannot be distinguished by their Cuntz semigroup. 

We aim to distinguish these $\CatCa$-algebras via the Hausdorffized unitary Cuntz semigroup and our methods, providing a more conceptual and streamlined proof of the non-isomorphism between $A$ and $B$. Let us start by recalling their constructions.

$\bullet$\,\,\textbf{Construction of the blocks.} 

Let $(p_n)_{n\geq 1}$ denote the sequence of all the prime numbers in increasing order. 

Let $(k_n)_{n\geq 1}$ be a strictly increasing sequence of natural numbers such that $k_1\geq 2$.\\
The building blocks of the inductive system for $A$ and $B$ are defined by
\begin{align*}
&A_1 = B_1 = C(\T)\\
&A_2 = B_2 = M_{p_1^{k_1}}(C[0,1]) \oplus M_{p_1^{k_1}}(C(\T))\\
&A_3 = B_3 = M_{p_1^{k_1}p_1^{k_2}}(C[0,1]) \oplus M_{p_1^{k_1}p_2^{k_2}}(C[0,1])\oplus M_{p_1^{k_1}p_2^{k_2}}(C(\T))\\
&A_4 = B_4 = M_{p_1^{k_1}p_1^{k_2}p_1^{k_3}}(C[0,1]) \oplus M_{p_1^{k_1}p_2^{k_2}p_2^{k_3}}(C[0,1]) \oplus M_{p_1^{k_1}p_2^{k_2}p_3^{k_3}}(C[0,1])\oplus M_{p_1^{k_1}p_2^{k_2}p_3^{k_3}}(C(\T))\\
&\quad\vdots
\end{align*}

Let $n\in\N$. Write $[n,i]:=\prod_{j=1}^{i}p_j^{k_j}\prod_{j=i+1}^{n-1}p_i^{k_j}$, for any $1\leq i\leq n-1$ and $[n,n]:=[n,n-1]$. (Notice that $[n+1,i]=p_i^{k_n}[n,i]$ for any $1\leq i\leq n-1$.) 

We remark that these notations yield the following induction formula.
\[
    		A_n=B_n=\bigoplus\limits_{i=1}^{n-1}M_{[n,i]}(C[0,1])\oplus M_{[n,n]}(C(\T)).
\]

$\bullet$\,\,\textbf{Construction of the connecting maps.} 

Let $(t_n)_{n\geq 1}$ be a countable dense subset of $[0,1]$.

Let $(z_n)_{n\geq 1}$ be a countable dense subset of $\T$. \\
Let $n \in\N$. The connecting maps $\phi_{nn+1}: A_n\longrightarrow A_{n+1}$ and $\psi_{nn+1}: B_n\longrightarrow B_{n+1}$ of the inductive systems are defined also inductively, via partial *-homomorphisms as follows.

$\bullet$ \emph{The $n-1$ first partial *-homomorphisms for $\phi_{nn+1}$ and $\psi_{nn+1}$}. Let $1\leq i\leq n-1$.
\[
\begin{array}{ll}
\phi_{nn+1}^i=\psi_{nn+1}^i\colon M_{[n,i]}(C[0,1])\longrightarrow M_{[n+1,i]}(C[0,1])\\
\hspace{4,8cm} f\longmapsto 
\begin{psmallmatrix}
f\\
&\ddots\\
&&f\\
&&&f(t_n)
\end{psmallmatrix}
\end{array}
\]

$\bullet$ \emph{The $n$-th partial $^*$-homomorphism for $\phi_{nn+1}$ and $\psi_{nn+1}$}. Let $\exp\colon [0,1]\rightarrow \T$ be the exponential map sending $t\mapsto e^{2i\pi t}$.
\[
\begin{array}{ll}
\phi_{nn+1}^n\colon M_{[n,n]}(C(\T))\longrightarrow M_{[n+1,n]}(C[0,1]) \\
\hspace{3cm} f\longmapsto 
\begin{psmallmatrix}
f\circ\, \exp \\
&f\circ (\exp^{-1})\\
&&f\circ\,\exp(1/r_n)\\
&&&\ddots \\
&&&&f\circ\,\exp(r_n-1/r_n)
\end{psmallmatrix}
\end{array}
\]
where $r_n:=p_n^{k_n}-1$. Now let $l_n:=4^n[n,n+1]$.
\[
\begin{array}{ll}
\psi_{nn+1}^n\colon M_{[n,n]}(C(\T))\longrightarrow M_{[n+1,n]}(C[0,1]) \\
\hspace{3cm} f\longmapsto 
\begin{psmallmatrix}
f\circ \,\exp^{l_n} \\
&f\circ\, \exp(0)\\
&&f\circ\,\exp(1/r_n)\\
&&&\ddots \\
&&&&f\circ\,\exp(r_n-1/r_n)
\end{psmallmatrix}
\end{array}
\]

$\bullet$ \emph{The $(n+1)$-th partial $^*$-homomorphism for $\phi_{nn+1}$ and $\psi_{nn+1}$}. 
\[
\begin{array}{ll}
\phi_{nn+1}^{n+1}=\psi_{nn+1}^{n+1}\colon M_{[n,n]}(C(\T))\longrightarrow M_{[n+1,n+1]}(C(\T)) \\
\hspace{4,4cm} f\longmapsto 
\begin{psmallmatrix}
f \\
&f(z_n)\\
&&\ddots \\
&&&f(z_n)
\end{psmallmatrix}
\end{array}
\]
We finally define
\[
A:=\lim\limits_{\underset{n}{\longrightarrow}}(A_n,\phi_{nn+1})\quad\quad\text{ and } \quad\quad
B:=\lim\limits_{\underset{n}{\longrightarrow}}(B_n,\psi_{nn+1})
\]
where
$
\phi_{nn+1}=(\phi_{nn+1}^1,..., \phi_{nn+1}^{n-1},(\phi_{nn+1}^{n}, \phi_{nn+1}^{n+1}))$ and $\psi_{nn+1}=(\psi_{nn+1}^1,..., \psi_{nn+1}^{n-1},(\psi_{nn+1}^{n}, \psi_{nn+1}^{n+1}))
$.

\begin{prop}
Both $A$ and $B$ are separable unital stable rank one $\CatCa$-algebras.
\end{prop}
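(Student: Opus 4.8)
The statement asserts three separate properties of the inductive limits $A = \varinjlim (A_n,\phi_{nn+1})$ and $B = \varinjlim(B_n,\psi_{nn+1})$: separability, unitality, and stable rank one. The plan is to verify each at the level of the building blocks $A_n = B_n = \bigoplus_{i=1}^{n-1} M_{[n,i]}(C([0,1])) \oplus M_{[n,n]}(C(\T))$ and then pass to the limit using the appropriate permanence properties.

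First, separability. Each block $A_n$ is a finite direct sum of matrix algebras over $C([0,1])$ and $C(\T)$, which are separable $\CatCa$-algebras (being unital and finitely generated over a separable abelian $\CatCa$-algebra). A countable inductive limit of separable $\CatCa$-algebras is separable, since $A = \overline{\bigcup_n \phi_{n\infty}(A_n)}$ and the union of the images of countable dense subsets of the $A_n$ is a countable dense subset of $A$. So this part is routine.

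Second, unitality. Each $A_n$ is unital (a finite direct sum of unital algebras). The connecting maps $\phi_{nn+1}$ are unital: inspecting the partial $^*$-homomorphisms $\phi^i_{nn+1}$, $\phi^n_{nn+1}$, $\phi^{n+1}_{nn+1}$, each sends the constant function $\mymathbb{1}$ to a direct sum of copies of $\mymathbb{1}$ (point evaluations of $\mymathbb{1}$ give $1$), so the block-diagonal maps preserve units and the multiplicity matrices have row sums equal to the matrix sizes. Hence $A$ is a limit of a unital inductive system with unital connecting maps, and is therefore unital (the common image of the $1_{A_n}$ is the unit of $A$). Same argument for $B$.

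Third — and this is the main point — stable rank one. The key fact is that having stable rank one is preserved under countable inductive limits of $\CatCa$-algebras (by Rieffel's results on topological stable rank, $\sr$ does not increase under inductive limits in the relevant sense; more precisely $\sr\!\left(\varinjlim A_n\right) \le \liminf_n \sr(A_n)$ when the connecting maps are unital). Thus it suffices to show each block $A_n$ has stable rank one. Each block is a finite direct sum of algebras of the form $M_k(C(X))$ with $X \in \{[0,1], \T\}$, and $\sr$ of a finite direct sum is the maximum of the summands' stable ranks, so it is enough to check $\sr(M_k(C([0,1]))) = 1$ and $\sr(M_k(C(\T))) = 1$. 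For a compact metric space $X$, one has $\sr(M_k(C(X))) = \lceil \dim X / (2k) \rceil + 1$, which for $X$ of covering dimension $1$ equals $1$ as soon as $k \ge 1$. Since $[0,1]$ and $\T$ are one-dimensional and all matrix sizes $[n,i]$ are at least $1$, every block has stable rank one, and the limit does too. The only subtlety to be careful about is the precise statement of the inductive-limit permanence for stable rank one with unital (non-injective) connecting maps; I would cite Rieffel's original paper or its standard restatement, and note that the maps here are unital so no pathology arises.

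\textbf{Main obstacle.} None of the three claims is genuinely hard; the only place requiring a moment's care is invoking the correct permanence theorem for stable rank one under inductive limits (making sure the hypothesis — unital connecting maps, or equivalently that we are not in a situation where $\sr$ can jump) is met, which it visibly is here since the explicit connecting maps are unital. Everything else is bookkeeping on the explicit building blocks.
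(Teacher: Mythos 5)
Your proof is correct and takes essentially the same route as the paper's, which simply observes that the blocks are separable and unital, the connecting maps are unital, and stable rank one is preserved by inductive limits since interval and circle algebras have stable rank one. One small correction to your justification of the last point: the formula you quote, $\sr(M_k(C(X))) = \lceil \dim X/(2k)\rceil + 1$, is misstated --- for one-dimensional $X$ it would give the value $2$, contradicting your own conclusion; the correct statements (Rieffel) are $\sr(C(X)) = \lfloor \dim X/2\rfloor + 1$ and $\sr(M_k(A)) = \lceil (\sr(A)-1)/k\rceil + 1$, which combine to give $\sr(M_k(C(X))) = 1$ whenever $\dim X \leq 1$, as needed for $X=[0,1]$ and $X=\T$.
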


\begin{proof}
Since all $\CatCa$-algebras of the inductive systems are separable and unital, together with the fact that all morphisms are also unital, we easily obtain that $A$ and $B$ are unital separable $\CatCa$-algebras. In addition, the stable rank one property is preserved by inductive limits and any interval or circle algebra has stable rank one. 
\end{proof}

Next, we dive into the lattice of ideals of both $A$ and $B$. More particularly, we exhibit the set of simple ideals of these $\CatCa$-algebras together with another set of peculiar ideals that will be of use to distinguish $A$ and $B$ by means of the Hausdorffized unitary Cuntz semigroup. 

\begin{lma} 
\label{lma:ideals}
Let $j\geq 1$. We consider the following inductive systems
\[
\left\{
    \begin{array}{ll}
    		\mathfrak{s}_{j}:=\lim\limits_{\underset{n> j}\longrightarrow}(M_{[n,j]}(C[0,1])),{\phi^j_{nm}}),\quad\quad\mathfrak{p}_{j}:=\lim\limits_{\underset{n\geq j}\longrightarrow}(I_{n,j},{\phi_{nm}}_{\mid I_{n,j}})\quad\text{ and } \quad\mathfrak{a}_{j}:=\lim\limits_{\underset{n\geq j}\longrightarrow}(I^c_{n,j},{\phi_{nm}}_{|I^c_{n,j}})\\
    		\mathfrak{t}_{j}:=\lim\limits_{\underset{n> j}\longrightarrow}(M_{[n,j]}(C[0,1])),{\psi^j_{nm}}),\quad\quad\mathfrak{q}_{j}:=\lim\limits_{\underset{n\geq j}\longrightarrow}(I_{n,j},{\psi_{nm}}_{\mid I_{n,j}})\quad\text{ and } \quad\mathfrak{b}_{j}:=\lim\limits_{\underset{n\geq j}\longrightarrow}(I^c_{n,j},{\psi_{nm}}_{|I^c_{n,j}})
    \end{array}
\right.
\]
where  $I_{n,j}=\overset{n-1}{\underset{i\geq j}\bigoplus}M_{[n,i]}(C[0,1])\,\oplus\, M_{[n,n]}(C(\T))$ and $I^c_{n,j}=\overset{j-1}{\underset{i=1}\bigoplus}M_{[n,i]}(C[0,1])$ are complementary of one another in $A_n$ and $B_n$.

(i) The sets of simple ideals of $A$ and $B$ are respectively $\{\mathfrak{s}_{j}\}_{j\geq 1}$ and $\{\mathfrak{t}_{j}\}_{j\geq 1}$. 

(ii) We have $\mathfrak{p}_{j}=\overset{j-1}{\underset{i=1}{\oplus}}\mathfrak{s}_i$ and $\mathfrak{q}_{j}=\overset{j-1}{\underset{i=1}{\oplus}}\mathfrak{t}_i$.

(iii)  Both $\mathfrak{a}_{j}$ and $\mathfrak{b}_{j}$ are ideals of $A$ and $B$ respectively and 
\[
    \left\{
    \begin{array}{ll}
    		A=\mathfrak{a}_j\oplus \mathfrak{p}_{j}\\
    		B=\mathfrak{b}_j\oplus \mathfrak{q}_{j}  
    		\end{array}
\right..
\]
\end{lma}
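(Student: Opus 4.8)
The plan is to analyse the inductive systems blockwise, exploiting the fact that all the partial $*$-homomorphisms defined above preserve the decomposition of $A_n$ into the summand $I_{n,j}$ (indices $i\geq j$, plus the circle block) and its complement $I^c_{n,j}$ (indices $i<j$). First I would verify that for each $n$ the connecting map $\phi_{nn+1}$ restricts to both $I_{n,j}\to I_{n+1,j}$ and $I^c_{n,j}\to I^c_{n+1,j}$: this is immediate from the explicit formulas, since the partial maps $\phi^i_{nn+1}$ for $i\le n-1$ send $M_{[n,i]}(C([0,1]))$ into $M_{[n+1,i]}(C([0,1]))$ (same index $i$), while $\phi^n_{nn+1}$ and $\phi^{n+1}_{nn+1}$ move the circle block $M_{[n,n]}(C(\T))$ into blocks of index $n$ and $n+1$, all of which lie in $I_{n+1,j}$ as long as $n\ge j$. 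Hence $\mathfrak{a}_j$, $\mathfrak{p}_j$ (and $\mathfrak{b}_j$, $\mathfrak{q}_j$) are genuine inductive limits of (two-sided, closed) ideals, so they are closed two-sided ideals of $A$ (resp.\ $B$), proving the ``ideal'' assertions in (i) and (iii). The identifications $\mathfrak{p}_j=\bigoplus_{i=1}^{j-1}\mathfrak{s}_i$ and $A=\mathfrak{a}_j\oplus\mathfrak{p}_j$ then follow because, at each finite stage, $A_n=I^c_{n,j}\oplus I_{n,j}$ is a direct-sum decomposition compatible with the connecting maps, and $\varinjlim$ commutes with finite direct sums; likewise $I^c_{n,j}=\bigoplus_{i=1}^{j-1}M_{[n,i]}(C([0,1]))$ splits as a direct sum of the $j-1$ blocks that become the simple ideals $\mathfrak{s}_1,\dots,\mathfrak{s}_{j-1}$.

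Next I would establish (i), that $\{\mathfrak{s}_j\}_{j\ge1}$ is exactly the set of simple ideals. For simplicity of each $\mathfrak{s}_j$: the system defining $\mathfrak{s}_j$ consists of single matrix blocks $A_n^j=M_{[n,j]}(C([0,1]))$ (for $n>j$) with connecting maps given by the partial map $\phi^j_{nn+1}$, which adds a point-evaluation at $t_n$. Since $(t_n)_n$ is dense in $[0,1]$, a standard argument (every nonzero ideal of $M_{[n,j]}(C([0,1]))$ corresponds to an open subset of $[0,1]$, and point evaluations at a dense set eventually detect every such open set) shows the limit is simple — this is the usual simplicity criterion for such ``Goodearl-type'' interval algebras. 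To see there are no other simple ideals: any ideal $I$ of $A$ corresponds to a lower-semicontinuous choice, at each stage $n$, of an open subset in each interval/circle block; minimality forces all but one block to be zero, and the compatibility with connecting maps (each interval block of $A_n$ maps by a point-evaluation into the next interval block of the same index, and the circle block spreads into several blocks) forces the surviving choice to be one of the $\mathfrak{s}_j$'s. I would phrase this via the lattice isomorphism $\Lat(A)\simeq\Lat(\Cu(A))$ together with the description of $\Cu$ of an AT-algebra as $\varinjlim\Lsc(\cdot,\overline{\mathbb N})$.

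The main obstacle, and the step requiring the most care, is the no-other-simple-ideals part of (i): one must rule out ideals supported ``partially'' on the circle block or spread across infinitely many blocks, and show the point-evaluation connecting maps genuinely collapse any sub-open-set to the whole space in the limit. I would handle this by working at the level of primitive ideals / the ideal lattice of the inductive limit, using that $\Prim(A)$ is the inverse limit of the $\Prim(A_n)$ along the maps dual to $\phi_{nn+1}$, and checking that the only closed points (equivalently, minimal nonzero ideals) surviving in the limit are the $\mathfrak{s}_j$. Once (i) is in hand, parts (ii) and (iii) are the essentially formal direct-sum bookkeeping described above, and I would close by remarking that the same verification applies verbatim to $B$ with $\psi$ in place of $\phi$, since the partial maps $\psi^i_{nn+1}$ for $i\le n-1$ and $\psi^{n+1}_{nn+1}$ coincide with those of $\phi$, and $\psi^n_{nn+1}$ differs only in the circle block and still lands inside $I_{n+1,j}$.
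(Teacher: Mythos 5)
Your plan is correct and is essentially the argument the paper intends: the paper's proof is only a pointer to the analogous \cite[Theorem 3.2]{C23a}, which rests on the same blockwise analysis you spell out (invariance of $A_n=I^c_{n,j}\oplus I_{n,j}$ under the connecting maps, inductive limits commuting with finite direct sums, Goodearl-type simplicity of each block limit from density of the evaluation points $(t_n)$, and the coherent-sequence description of ideals of an inductive limit to rule out further simple ideals). Two small corrections for a write-up: what your argument actually identifies with $\bigoplus_{i=1}^{j-1}\mathfrak{s}_i$ is $\mathfrak{a}_j=\varinjlim I^c_{n,j}$ rather than $\mathfrak{p}_j$ (item (ii) of the statement has the two labels interchanged relative to the definitions, as the later use of $\K_1(\mathfrak{q}_j)\simeq\Z$ confirms), so state the corrected identity explicitly; and phrase the ``no other simple ideals'' step through the ideal lattice ($\Lat(A)\simeq\varprojlim\Lat(A_n)$ under preimage maps, or $\Lat(\Cu(A))$) rather than through ``$\Prim(A)=\varprojlim\Prim(A_n)$'', since preimages of primitive ideals need not be primitive.
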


\begin{proof} This is done similarly as in the proof of \cite[Theorem 3.2]{C23a}.
\end{proof}

Not only has it been proved in \cite{GJL20} that $A$ and $B$ agree at the level of the (original) Elliott invariant, but also it has been stated that they agree at the level of the Cuntz semigroup. Even though the latter statement is true, we would like to offer a rigorous proof, via an approximate intertwining argument. 

Approximate intertwinings in the category $\Cu$ have been firstly developed in \cite{C22}, for specific $\Cu$-semigroups. Subsequently, this technique has been generalized in \cite{CV24}, to the entire category $\Cu$, and its subcategory $\Cu^*$. Although the theorems have been stated in greater generality, using the finite-set comparison, we restate them in our specific context, as follows.

\begin{thm}[{\cite[Theorem 3.16]{C22} - \cite[Theorem 3.17]{CV24}}]
Let $(S_i,\sigma_{ij}),(T_i,\tau_{ij})$ be inductive sequences in the category $\Cu^*$. 

Assume that there are $\Cu^*$-morphisms $\alpha_i\colon S_i\rightarrow T_i$ and $\beta_i\colon T_i\rightarrow S_{i+1}$ together with a metric $d^*$ such that 
\[d^*(\beta_i\circ\alpha_i,\sigma_{ii+1})<1/2^i\quad\text{and}\quad d^*(\alpha_{i+1}\circ\beta_i,\tau_{ii+1})<1/2^i.\]

Then there exists a $\Cu^*$-isomorphism between $\underset{\rightarrow}{\lim}(S_i,\sigma_{ij})\simeq \underset{\rightarrow}{\lim}(T_i,\tau_{ij})$.
\end{thm}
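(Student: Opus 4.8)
The plan is to run Elliott's two-sided approximate intertwining argument, transported to the category $\Cu^*$; the fact that $\beta_i$ lands in $S_{i+1}$ rather than in $S_i$ is precisely the shift that lets one assemble the staircase of triangles that commute up to the errors $1/2^i$. First I would record, using only the triangle inequality for $d^*$ and the fact that pre- and post-composition do not increase $d^*$, that the ``natural'' squares relating the $\alpha$'s, the $\beta$'s and the connecting maps commute up to a summable error: for instance $d^*(\tau_{ii+1}\circ\alpha_i,\ \alpha_{i+1}\circ\sigma_{ii+1})$ is bounded by $d^*(\tau_{ii+1}\circ\alpha_i,\ \alpha_{i+1}\circ\beta_i\circ\alpha_i)+d^*(\alpha_{i+1}\circ\beta_i\circ\alpha_i,\ \alpha_{i+1}\circ\sigma_{ii+1})<1/2^i+1/2^i$, and symmetrically for $\beta$.

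Next I would pass to the inductive limits. Writing $S:=\underset{\rightarrow}{\lim}(S_i,\sigma_{ij})$ and $T:=\underset{\rightarrow}{\lim}(T_i,\tau_{ij})$, I would define $\alpha_\infty\colon S\to T$ on the canonical image of $S_i$ by $[x]\mapsto[\alpha_i(x)]$ and, symmetrically, $\beta_\infty\colon T\to S$ by $[y]\mapsto[\beta_i(y)]$. The content is that these assignments are well defined: compatibility with the connecting maps follows from the approximately-commuting squares of the previous step, since $d^*$ is one of the $\Cu^*$-metrics of \cite{CV24} (induced by a comparison set with generating image), so that a small value of $d^*$ forces two $\Cu^*$-morphisms to agree after a uniformly small ``shrinking'' of the generators; for a way-below pair $x\ll y$ this slack absorbs all the errors accumulated past a sufficiently late stage. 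One then extends off the generating set using axiom (O2), and checks that order, addition and suprema of increasing sequences are preserved, which is inherited from the $\alpha_i,\beta_i$ via the continuity of $\Cu^*$-inductive limits.

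Finally I would verify that $\alpha_\infty$ and $\beta_\infty$ are mutually inverse. On the image of $S_i$ one has $\beta_\infty\alpha_\infty[x]=[\beta_i\alpha_i(x)]$; since $d^*(\beta_i\circ\alpha_i,\ \sigma_{ii+1})<1/2^i$ with the errors summable, $[\beta_i\alpha_i(x)]=[\sigma_{ii+1}(x)]=[x]$ in $S$, and as the images of the $S_i$ are sup-dense in $S$ this gives $\beta_\infty\circ\alpha_\infty=\id_S$; the same computation on the other side yields $\alpha_\infty\circ\beta_\infty=\id_T$. Hence $\alpha_\infty$ is the desired $\Cu^*$-isomorphism.

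The hard part will be the middle step: at no finite stage is any square literally commutative, so there is no honest morphism of which to take a limit, and the whole argument rests on the compact-containment relation providing, for each way-below pair, enough room to absorb the tail of the error series --- which is exactly where the hypothesis that $d^*$ arises from a comparison with a generating image is indispensable. The residual work, namely showing that the generator-level estimates really assemble into an additive, order-preserving, sup-preserving morphism between the limits rather than merely a monotone assignment on a generating set, is routine bookkeeping, but it is where the $\Cu^*$-inductive-limit formalism of \cite{CV24} carries the load.
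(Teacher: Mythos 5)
The paper does not prove this statement at all: it is recalled verbatim (in a loosened, metric formulation) from \cite[Theorem 3.16]{C22} and \cite[Theorem 3.17]{CV24}, so the only comparison available is with those proofs. Your overall strategy --- Elliott's two-sided approximate intertwining transported to $\Cu^*$ --- is indeed the strategy of the cited proofs, but as written your argument has two genuine gaps. First, your opening step silently assumes that $d^*$ is contractive under \emph{pre}-composition (to pass from $d^*(\alpha_{i+1}\circ\beta_i,\tau_{ii+1})<1/2^i$ to a bound on $d^*(\alpha_{i+1}\circ\beta_i\circ\alpha_i,\tau_{ii+1}\circ\alpha_i)$). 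For the metrics actually in play here --- Thomsen-type metrics $d_{\mathrm{N}}$, where the distance is a supremum over a fixed comparison set $\mathrm{N}\subseteq\Th(\cdot)$ --- post-composition is contractive, but pre-composition is not in general, because the comparison set of the new domain is not carried into the old one. This step needs either a hypothesis on $d^*$ or the finite-set/generating-image bookkeeping of \cite{CV24}.

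Second, and more seriously, the map $\alpha_\infty\colon[x]\mapsto[\alpha_i(x)]$ is simply not well defined, and the summability of the errors cannot repair this the way you invoke it. In a $\Cu^*$-limit there is no metric on \emph{elements}, so there is no Cauchy-sequence argument available: for a fixed $x\in S_i$ the elements $\tau_{j\infty}\alpha_j\sigma_{ij}(x)$ differ by a discrepancy of order $2^{-i}$ that never disappears as $j\to\infty$, and likewise $[\beta_i\alpha_i(x)]=[\sigma_{ii+1}(x)]$ in $S$ is false in general --- closeness of morphisms does not identify their values on a fixed element. The actual proofs convert each estimate $d^*<2^{-i}$ into one-sided inequalities along way-below pairs (of the shape $\beta_i\alpha_i(x')\le\sigma_{ii+1}(x)$ and $\sigma_{ii+1}(x')\le\beta_i\alpha_i(x)$ for suitable $x'\ll x$), and then \emph{define} the limit morphisms order-theoretically, as suprema over $\ll$-approximants pushed along the tail of the system, where the accumulated shift $\sum_{k\ge j}2^{-k}$ tends to $0$; only for maps so constructed can one verify additivity, preservation of $\ll$ and suprema, and mutual inverseness. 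You correctly identify this as ``the hard part,'' but the proposal asserts rather than performs it, and the naive definition it rests on would fail; so the proof is incomplete precisely at the step that constitutes the content of the cited theorems.
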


We may abusively write $a\leq x,y$ to mean that $a\leq x$ and $a\leq y$. Similarly, we may write $x,y\leq a$.

\begin{lma}\label{lma:computeCu} Let $n\geq 1$. We compute that 
\[
d_{\Cu}(\Cu(\phi_{nn+1}^n),\Cu(\psi_{nn+1}^n))=d^*_{\Cu}(\Cu_{\K_1}(\phi_{nn+1}^n),\Cu_{\K_1}(\psi_{nn+1}^n))\leq 1/r_n.
\]
\end{lma}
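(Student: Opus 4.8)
The plan is to compute both Cuntz-semigroup distances directly from the definition of the $\Cu$-metric on $\Hom_{\Cu}(\Lsc(X,\overline{\N}),T)$, using the fact that $\Cu(C(\T)) \simeq \Lsc(\T,\overline{\N})$ and that both $\phi_{nn+1}^n$ and $\psi_{nn+1}^n$ are finite direct sums of point-evaluations and of composites with $\exp$ or $\exp^{l_n}$. First I would recall that for a $\Cu$-morphism $\alpha\colon \Lsc(\T,\overline{\N})\to T$ arising from a *-homomorphism $f\mapsto \bigoplus_i f\circ \lambda_i$ with $\lambda_i\colon Y\to \T$ continuous maps (here $Y$ is a point or $[0,1]$), one has $\alpha(\mymathbb{1}_U) = \sum_i \mymathbb{1}_{\lambda_i^{-1}(U)}$, so that the $\Cu$-distance between two such morphisms reduces to a purely geometric statement about how far the preimages move when $U$ is thickened to $U_r$. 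Concretely, $d_{\Cu}(\Cu(\phi_{nn+1}^n),\Cu(\psi_{nn+1}^n)) \le r$ exactly when, after thickening any open $U\subseteq\T$ by $r$ (in the metric of $[0,1]$ pulled back appropriately, or directly in $\T$), the preimage patterns of the two morphisms are each contained in the other's thickened version.

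The second step is the key observation that $\phi_{nn+1}^n$ and $\psi_{nn+1}^n$ differ only in their \emph{first} two diagonal entries: $\phi$ has $f\circ\exp$ and $f\circ(\exp^{-1})$, while $\psi$ has $f\circ\exp^{l_n}$ and $f\circ\exp(0)$ (a point evaluation at $1\in\T$); all the remaining entries $f\circ\exp(j/r_n)$ for $1\le j\le r_n-1/r_n$ are \emph{identical} in both maps. So the distance is governed entirely by comparing the pair $\{f\circ\exp,\ f\circ\exp^{-1}\}$ with the pair $\{f\circ\exp^{l_n},\ \ev_1\}$ at the level of preimages of open sets. Here is where the exponents matter: $\exp^{l_n}$ wraps $[0,1]$ around $\T$ a total of $l_n$ times, so $(\exp^{l_n})^{-1}(U)$ for a small arc $U$ is a union of $l_n$ tiny intervals spread through $[0,1]$ at spacing $1/l_n$; whereas $\exp^{-1}(U)$ is a single interval (and $\exp(-t)$ its mirror image). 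The point is that these configurations are mutually subordinate after a thickening by roughly $1/r_n$ — intuitively because within any window of size $1/r_n$ in $[0,1]$, the entry $f\circ\exp(j/r_n)$ already "hits" every arc of length $\sim 1/r_n$ of the circle, so the discrepancy between the wrapped map and the single turn is absorbed by the common diagonal entries once we thicken by $1/r_n$. I would make this precise by checking: for any arc $U\subseteq\T$, $\mymathbb{1}_{\exp^{-1}(U)}\le \sum_{j}\mymathbb{1}_{\exp^{-1}(\{j/r_n\}\in U_{1/r_n})}$-type bounds, i.e. the common evaluation points at spacing $1/r_n$ provide a net fine enough to dominate either configuration after an $r=1/r_n$ thickening, and symmetrically.

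For the equality $d_{\Cu}(\Cu(\phi_{nn+1}^n),\Cu(\psi_{nn+1}^n)) = d^*_{\Cu}(\Cu_{\K_1}(\phi_{nn+1}^n),\Cu_{\K_1}(\psi_{nn+1}^n))$, I would invoke \autoref{prop:lwrbnd}(ii): with the trivial metric on fiber diagrams, $d^*_{\Cu}$ equals $\inf\{r>\epsilon_0 \mid \mathcal{F}(\mymathbb{1}_U,\mymathbb{1}_{U_r}) \text{ commutes for all }U\}$ where $\epsilon_0 = d_{\Cu}(\Cu(\phi_{nn+1}^n),\Cu(\psi_{nn+1}^n))$. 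So it suffices to show that at $r=\epsilon_0$ (or any $r\ge\epsilon_0$ already achieved above), the relevant fiber diagrams in $\K_1$ commute. Since the domain is $\Lsc(\T,\overline{\N})$ and the $\K_1$-group of the domain ideal $C(\T)$ is $\Z$, and both partial maps send the canonical generator $[\id_\T]$ to the same element (the multiplicity-weighted sum of winding numbers: $+1$ and $-1$ contribute $0$ from $\phi$'s first two entries; $l_n$ and $0$ contribute $l_n$ from $\psi$'s — wait, this needs care), I would need to verify that the $\K_1$-components actually match on the relevant ideals; if they do not match globally, the $\K_1$-compatibility is only required \emph{locally} on the small ideals $I_U$ corresponding to proper open arcs $U\subsetneq\T$, where $\K_1(C_0(U))=0$, so the fiber diagrams commute trivially. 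This last point — that the winding-number discrepancy lives only on the \emph{full} circle ideal and not on any proper sub-arc ideal, hence does not obstruct commutativity of $\mathcal{F}(\mymathbb{1}_U,\mymathbb{1}_{U_r})$ for the $U$'s that matter — is what I expect to be the subtle step, and it is precisely the mechanism by which passing from $\Cu$ to $\Cu_{\K_1}$ does not enlarge the distance here even though the maps are genuinely $\K_1$-distinct. The routine part is the geometric preimage estimate giving the bound $1/r_n$; the delicate part is the bookkeeping of which ideals carry nontrivial $\K_1$ and confirming the fiber-diagram commutativity is automatic on the proper-arc ideals.
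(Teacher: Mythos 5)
Your handling of the $\Cu$-distance is essentially the paper's argument: both rest on the fact that the two partial maps share all the evaluation entries at the points $\exp(j/r_n)$, which form a $1/r_n$-net of $\T$, so that after thickening an open $U$ by $1/r_n$ the at most two differing entries ($f\circ\exp$, $f\circ\exp^{-1}$ versus $f\circ\exp^{l_n}$, $f\circ\exp(0)$) are absorbed; the paper makes this precise via the count $k_U\leq \Cu(\phi_{nn+1}^n)(\mymathbb{1}_U)\leq k_U+2$ together with $k_{U_{1/r_n}}\geq k_U+2$ unless $U_{1/r_n}=\T$.

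The gap is in the step you yourself flag as the subtle one, namely the equality with $d^*_{\Cu}$. Your proposed mechanism --- that the fiber-diagram condition is ``only required on the ideals $I_U$ for proper open arcs $U\subsetneq\T$'', where $\K_1(C_0(U))=0$ --- is not what the definition asks: by \autoref{dfn:metricK} and \autoref{prop:lwrbnd}(ii), commutativity of $\mathcal{F}(\mymathbb{1}_U,\mymathbb{1}_{U_r})$ is required for \emph{every} open $U\subseteq\T$, including $U=\T$, where the source group is $\K_1(C(\T))\simeq\Z$ and your source-triviality argument says nothing. That is precisely the case you leave unresolved when you assert that the winding-number discrepancy ``does not obstruct commutativity for the $U$'s that matter''. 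The missing observation --- which is the paper's entire argument for this half --- sits on the target side: the codomain of both partial maps is $M_{[n+1,n]}(C([0,1]))$, and every ideal of it has trivial $\K_1$; the paper simply notes that $\Cu(\phi_{nn+1}^n)(\mymathbb{1}_{U_{1/r_n}})$ and $\Cu(\psi_{nn+1}^n)(\mymathbb{1}_{U_{1/r_n}})$ dominate $\mymathbb{1}_{[0,1]}$, so the relevant target ideals are the whole algebra and all fiber diagrams commute trivially, for every $U$; hence $d^*_{\Cu}$ collapses to $\epsilon_0=d_{\Cu}$. This also dissolves the worry you raise about the generator: since the target $\K_1$-groups vanish, $\K_1(\phi_{nn+1}^n)=\K_1(\psi_{nn+1}^n)=0$ and there is no ``$0$ versus $l_n$'' discrepancy in $\K_1$ at all --- the wrapping exponent $l_n$ only becomes visible at the level of determinants, i.e.\ of $\overline{\K}_1$, which is exactly why the paper later needs $\Cu_{\overline{\K}_1}$ rather than $\Cu_{\K_1}$ to distinguish $A$ from $B$. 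So the bound $1/r_n$ is fine, but the equality step as you argue it would not go through without replacing the proper-arc reduction by the target-triviality argument.
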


\begin{proof}
First, let us compute the $\Cu$-distance.  Let $U\subseteq \T$ be an open set. Let $k_U\in\N$ be the number of elements of the set $\{\exp(k/r_n)\mid 1\leq k\leq r_n-1\}$ belonging to $U$. We compute that
\[
k_U\leq\Cu(\phi_{nn+1}^n)(\mymathbb{1}_U)(t),\Cu(\psi_{nn+1}^n)(\mymathbb{1}_U)(t)\leq k_U+2.
\]
Further, we either have that $U_{1/r_n}=\T$ or else, $k_{U_{1/r_n}}= k_U+2$. In both cases, we obtain the following inequalities, for any open set $U\subseteq \T$.
\[
\Cu(\phi_{nn+1}^n)(\mymathbb{1}_{U})\leq \Cu(\psi_{nn+1}^n)(\mymathbb{1}_{U_{1/r_n}})\quad\text{and}\quad \Cu(\psi_{nn+1}^n)(\mymathbb{1}_{U})\leq \Cu(\phi_{nn+1}^n)(\mymathbb{1}_{U_{1/r_n}}).
\]
Secondly, let us compute the $\Cu^*$-distance.  Let $U\subseteq \T$ be an open set. We easily see that $k_{U_{1/r_n}}>0$. Therefore, we compute that for any $U\subseteq \T$
\[
\Cu(\phi)(\mymathbb{1}_{U_{1/r_n}}),\Cu(\psi)(\mymathbb{1}_{U_{1/r_n}})\geq \mymathbb{1}_{[0,1]}.
\]

Let us write $x_A=\Cu(\phi)(\mymathbb{1}_U),y_A=\Cu(\phi)(\mymathbb{1}_{U_{1/r_n}})$, and similarly we write $x_B=\Cu(\psi)(\mymathbb{1}_U),y_B=\Cu(\phi)(\mymathbb{1}_{U_{1/r_n}})$. We know that $\K_1(I_{y_A})\simeq\K_1(I_{y_B})\simeq \{0\}$. As a consequence, the fiber diagram of $(\Cu(\phi),\Cu(\psi))$ at coordinates $(\mymathbb{1}_U,\mymathbb{1}_{U_{r_n}})$ trivially commutes, which ends the proof.
\end{proof}

\begin{thm}\label{thm:Cuiso} We have the following isomorphisms.

(i) $\Cu(A)\simeq \Cu(B)$.

(ii) $\Cu_{\K_1}(A)\simeq \Cu_{\K_1}(B)$. In particular, $A$ and $B$ agree at the level of $\K_0$ and $\K_1$.\\
Furthermore, any (scaled) $\Cu$-isomorphism $\alpha_0\colon \Cu(A)\simeq \Cu(B)$ maps $\mathfrak{p}_j\mapsto \mathfrak{q}_j$, for all $j\geq 1$.
\end{thm}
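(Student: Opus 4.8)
The plan is to apply the approximate intertwining theorem (\cite[Theorem 3.16]{C22} - \cite[Theorem 3.17]{CV24}) to the inductive sequences defining $\Cu(A),\Cu(B)$ and $\Cu_{\K_1}(A),\Cu_{\K_1}(B)$. First I would set up the diagonal intertwining: since the building blocks $A_n$ and $B_n$ coincide and the connecting maps $\phi_{nn+1}$ and $\psi_{nn+1}$ agree in all partial $^*$-homomorphisms except the $n$-th one (the map out of the circle block $M_{[n,n]}(C(\T))$), I would take $\alpha_i := \Cu(\id)$ on the components where the maps agree and, for the circle component, observe that it suffices to intertwine $\Cu(\phi^n_{nn+1})$ with $\Cu(\psi^n_{nn+1})$. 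Concretely, take $\alpha_n \colon \Cu(A_n)\to \Cu(B_n)$ and $\beta_n\colon \Cu(B_n)\to \Cu(A_{n+1})$ to be the natural identifications/connecting maps arranged so that $\beta_n\circ\alpha_n$ and $\sigma_{nn+1}=\Cu(\phi_{nn+1})$ differ only through the circle block, and similarly on the $B$-side; the previous lemma then gives $d_{\Cu}$ (resp. $d^*_{\Cu}$) estimates bounded by $1/r_n = 1/(p_n^{k_n}-1)$, which is summable and eventually below $1/2^n$ (since $k_n$ is strictly increasing with $k_1\geq 2$, so $r_n\to\infty$ rapidly). This yields (i) and (ii). For the ``in particular'' clause about $\K_0$ and $\K_1$: these are recovered from $\Cu_{\K_1}$ by a standard argument, since $\K_0(A)$ is recovered as the group of compact elements (or via the Murray-von Neumann data inside $\Cu(A)$) and $\K_1(A) = \K_1(I_{1_A})$ is read off directly from the top component of the $\Cu_{\K_1}$-construction.

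For the final assertion — that any scaled $\Cu$-isomorphism $\alpha_0\colon \Cu(A)\simeq\Cu(B)$ sends $\mathfrak p_j\mapsto\mathfrak q_j$ — I would argue via the lattice of ideals. A $\Cu$-isomorphism induces a lattice isomorphism $\Lat_f(\Cu(A))\simeq\Lat_f(\Cu(B))$, hence a bijection between the simple ideals $\{\mathfrak s_j\}$ of $A$ and $\{\mathfrak t_j\}$ of $B$ (Lemma \ref{lma:ideals}(i)), compatible with the order structure of the ideal lattice. The key point is that the simple ideals $\mathfrak s_j$ are distinguished from one another by their position in the lattice together with $\Cu$-theoretic data of the associated subquotients (e.g. the matrix sizes $[n,j]$ are recorded in the scaling/comparison data of $\Cu(\mathfrak s_j)$, so $\mathfrak s_j$ must map to the unique $\mathfrak t_{j'}$ with matching invariant, and one checks $j'=j$ using that the isomorphism is scaled, i.e. preserves the class of the unit, hence preserves the ``finite part'' ordering of the $\mathfrak s_j$'s). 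Since $\mathfrak p_j=\bigoplus_{i=1}^{j-1}\mathfrak s_i$ and $\mathfrak q_j=\bigoplus_{i=1}^{j-1}\mathfrak t_i$ by Lemma \ref{lma:ideals}(ii), and $\alpha_0$ respects finite suprema in the ideal lattice, it follows that $\alpha_0(\mathfrak p_j)=\mathfrak q_j$.

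The main obstacle I anticipate is the bookkeeping in the intertwining argument: the connecting maps $\phi_{nn+1}$ are not simply ``$\psi_{nn+1}$ plus a perturbation'' as single morphisms — they are built from many partial $^*$-homomorphisms, and the circle block of $A_n$ feeds both into an interval block of $A_{n+1}$ (via $\phi^n$, where the discrepancy lives) and into the circle block of $A_{n+1}$ (via $\phi^{n+1}$, where there is no discrepancy). So I must be careful to choose $\alpha_n,\beta_n$ that are genuine $\Cu$- (resp. $\Cu^*$-) morphisms making the two triangles in the intertwining diagram commute up to the required error only in the one relevant coordinate, and to verify the error in the $\Cu_{\K_1}$ (resp.\ fiber-diagram) metric is controlled — here the previous lemma's observation that $\K_1(I_{y_A})\simeq\K_1(I_{y_B})\simeq\{0\}$ for the enlarged sets is what kills the group discrepancy, so the $\Cu^*$-error equals the $\Cu$-error. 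A secondary subtlety is making the identification of $\K_0$ and $\K_1$ from $\Cu_{\K_1}$ precise; I would either cite the relevant recovery results from \cite{C23c} or spell out that $V(A)\subseteq\Cu(A)$ and the top-coordinate group recover the needed groups.
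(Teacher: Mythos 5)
For (i) and (ii) your proposal is the paper's proof: since $A_n=B_n$ and $\phi^i_{nn+1}=\psi^i_{nn+1}$ for all $i\neq n$, the approximate intertwining theorem is applied with identity maps, the only discrepancy being the circle block, whose error is the $1/r_n$ (resp.\ $d^*_{\Cu}$) estimate of the preceding lemma; your remark that the vanishing of $\K_1$ of the enlarged ideals kills the group part of the fiber diagrams is exactly how that lemma works, and reading $\K_0$ off the compact elements (stable rank one) and $\K_1$ off the $\Cu_{\K_1}$-data is also how the paper justifies the ``in particular'' clause.

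For the ``furthermore'' part your ending differs from the paper's and has one soft spot. The paper pins down the matching of simple ideals concretely: under stable rank one the compact elements of $\Cu(\mathfrak{s}_j)$ recover $\K_0(\mathfrak{s}_j)\simeq\Z[\tfrac{1}{p_j}]$, and since these groups are pairwise non-isomorphic the induced lattice isomorphism must send $\mathfrak{s}_j\mapsto\mathfrak{t}_j$; it then gets $\alpha_0([1_{\mathfrak{p}_j}])=[1_{\mathfrak{q}_j}]$ from $[1_A]=\sum_{k<j}[1_{\mathfrak{s}_k}]+[1_{\mathfrak{p}_j}]$, the scaled hypothesis and cancellation of compact elements. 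Your identification of $\mathfrak{s}_j$ with $\mathfrak{t}_j$ via ``matrix sizes recorded in the scaling/comparison data'' is the right idea but must be made precise (the invariant is exactly the $p_j$-divisibility, i.e.\ $\K_0(\mathfrak{s}_j)\simeq\Z[\tfrac{1}{p_j}]$), whereas the appeal to a ``finite part ordering'' preserved because the isomorphism is scaled does no work: the simple ideals are pairwise incomparable, and the scaled hypothesis enters the paper's argument only in the unit-class step, not in matching simple ideals. Your concluding step also leans on Lemma~\ref{lma:ideals}(ii) as literally printed; note that as printed it would force $\mathfrak{p}_1=0$ rather than $\mathfrak{p}_1=A$, so what is actually available is $\mathfrak{a}_j=\bigoplus_{i<j}\mathfrak{s}_i$ together with $A=\mathfrak{a}_j\oplus\mathfrak{p}_j$. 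Your lattice route still closes after this correction: the ideal lattice is distributive, so $\mathfrak{p}_j$ is the unique complement of $\mathfrak{a}_j$, and lattice isomorphisms preserve complements, giving $\mathfrak{p}_j\mapsto\mathfrak{q}_j$ without the paper's cancellation-of-compact-elements step. So your ending is a legitimate, slightly more lattice-theoretic alternative, provided you make the $\K_0(\mathfrak{s}_j)\simeq\Z[\tfrac{1}{p_j}]$ computation explicit.
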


\begin{proof}
To prove (i) and (ii), we note that $A_n=B_n$ and that $\phi_{nn+1}^i=\psi_{nn+1}^i$, for all $n\in\N$ and all $i\leq n+1$ such that $i\neq n$. Therefore we can apply the approximate intertwining theorem with identity maps between the sequences involved and the computation obtained in the previous lemma.

Now, consider any scaled $\Cu$-isomorphism $\alpha_0\colon \Cu(A)\simeq \Cu(B)$. 
Observe that $\Cu(I)\simeq \Cu(I_{\alpha_0})$. In particular, we have an isomorphism $\Cu(I)_c\simeq \Cu(I_{\alpha_0})_c$ between their monoids of compact elements. In the stable rank one context, this implies that $\K_0(I)\simeq \K_0(I_{\alpha_0})$. Furthermore, simple ideals of $A$ are mapped to simple ideals of $B$. Additionally, it is readily computed that, for all $j\geq 1$
\[
\K_0(\mathfrak{s}_j)\simeq \K_0(\mathfrak{t}_j)\simeq \Z[\frac{1}{p_j}].
\]
Therefore, $\theta_{\alpha_0}$ maps $\mathfrak{s}_j\mapsto \mathfrak{t}_j$ and $\alpha_0$ maps $[1_{\mathfrak{s}_j}]\mapsto [1_{\mathfrak{t}_j}]$. 

Finally, we observe that $[1_A]=\sum_{k=1}^{j-1}[1_{\mathfrak{s}_k}]+[1_{\mathfrak{p}_j}]$, for any $j\geq 1$. Similarly for $B$. Again, in the stable rank one context, it is well-known that $\Cu(A)$ and $\Cu(B)$ have cancellation of compact elements. (See \cite[Theorem 4.3]{RW10}.) As a result, we deduce that $\alpha_0([1_{\mathfrak{p}_j}])=[1_{\mathfrak{q}_j}]$, for any $j\geq 1$.
\end{proof}

\begin{thm}\label{thm:diago}
Let $j\geq 1$. Consider the canonical embeddings $\iota^A_j\colon \mathfrak{p}_j\lhook\joinrel\longrightarrow A$ and $\iota^B_j\colon \mathfrak{q}_j\lhook\joinrel\longrightarrow B$. 

(i) $\{\overline{\K}_1(\iota^A_j)\}_{j\geq 1}$ are simultaneously Nielsen-Thomsen diagonalisable.

(ii) $\{\overline{\K}_1(\iota^B_j)\}_{j\geq 1}$ are not simultaneously Nielsen-Thomsen diagonalisable.
\end{thm}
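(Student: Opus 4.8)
The plan is to understand $\overline{\K}_1(\mathfrak{p}_j)$, $\overline{\K}_1(\mathfrak{q}_j)$ and the images of the embeddings $\iota^A_j$, $\iota^B_j$ very explicitly, using the inductive-limit descriptions from \autoref{lma:ideals}. First I would note that $\mathfrak{p}_j = \bigoplus_{i<j}\mathfrak{s}_i$ (and $\mathfrak{q}_j = \bigoplus_{i<j}\mathfrak{t}_i$) are AT-algebras built entirely from \emph{interval and circle} blocks, so their $\K_1$-groups are free abelian of known rank and their $\HH$-groups are computable from the traces. The key point is the contrast at the level of the $(n)$-th partial maps: for $\phi^n_{nn+1}$ the circle block $M_{[n,n]}(C(\T))\to M_{[n+1,n]}(C([0,1]))$ uses $f\mapsto f\circ\exp$ on one summand, whereas for $\psi^n_{nn+1}$ one summand is $f\mapsto f\circ\exp^{l_n}$ with $l_n=4^n[n,n+1]\to\infty$. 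On the $\overline{\K}_1$-level, $\exp^{l_n}$ acts as multiplication by $l_n$ on $\K_1$ and, crucially, contributes a rotation term (an extended determinant) that grows with $n$. This is precisely the ``uniformly varied determinants'' phenomenon of \cite{GJL20}, now read through the rotation maps of Section~\ref{sec:Sec3}.

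For part (i), I would argue that in the $A$-system every connecting map restricted to $I_{n,j}$, read on the circle summand, is (up to the standard point-evaluation summands, which are homotopically trivial and contribute nothing to rotation) built from $f\mapsto f\circ\exp$ and $f\mapsto f\circ\exp^{-1}$, i.e. degree $\pm1$ maps. Choosing on each $A_n$ the \emph{canonical} Nielsen-Thomsen basis coming from the generating unitary $z$ of the circle block (the obvious analogue of $\mathcal{C}_0$ for $C(\T)$), one checks that the connecting maps send basis elements to basis elements up to elements of $\overline{DU}$, so the rotation maps $R_{\mathcal{C}_n\mathcal{C}_{n+1}}(\phi_{nn+1})$ vanish. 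Passing to the inductive limit, the compatible family of bases yields a Nielsen-Thomsen basis of $\mathfrak{p}_j$ (and of $A$) for which $\Mat(\overline{\K}_1(\iota^A_j))$ is diagonal by \autoref{prop:commutsquare}(iii)$\Leftrightarrow$(i); simultaneity for all $j$ is automatic because the bases are nested and a single coherent choice works for every $j$ at once.

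For part (ii), the plan is a proof by contradiction: suppose Nielsen-Thomsen bases $\mathcal{D}$ of $B$ and $\mathcal{E}_j$ of $\mathfrak{q}_j$ existed making all $\Mat(\overline{\K}_1(\iota^B_j))$ simultaneously diagonal. By \autoref{prop:commutsquare} this is equivalent to the triples $(\HH(\iota^B_j),\overline{\K}_1(\iota^B_j),\K_1(\iota^B_j))$ being split compatibly, i.e.\ to the existence of a section $s_B\colon\K_1(B)\to\overline{\K}_1(B)$ whose restriction to each $\K_1(\mathfrak{q}_j)\subseteq\K_1(B)$ lands in $i_{\mathfrak{q}_j}(\HH(\mathfrak{q}_j))+$(image of a section of $\mathfrak{q}_j$). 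I would then trace a fixed generator $g\in\K_1(B_1)=\K_1(C(\T))=\Z$ through the system: its image in $\overline{\K}_1(B)$ is the limit of the classes $\overline{\psi_{1n}(z)}$, and the rotation contribution accumulated through the first $n$ stages is governed by $\sum$ of the extended determinants $\overline{\Delta}_{\mathcal{C}}(\exp^{l_m})$, which by the de la Harpe-Skandalis formula equals (up to sign and the chosen basis correction) $[\,\ev_{h_m}\,]$ with $h_m$ the affine function $t\mapsto l_m t$ on the interval block, whose norm in $\HH$ is bounded below by a fixed positive constant (because $l_m t$ winds $l_m$ times and cannot be compensated inside $\overline{\K_0}$ of the interval summand, whose traces are too few). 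Hence no section of $\pi_B$ restricted to the relevant sub-$\K_1$ can kill the rotation, contradicting diagonalisability. The same bookkeeping shows the obstruction is genuinely present for \emph{all} $j$ simultaneously — one cannot re-choose the bases to fix even one $j$ without breaking another, because the offending winding numbers $l_n$ are unbounded.

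\textbf{Main obstacle.} The hard part will be the quantitative lower bound in part (ii): showing that the rotation map of $\iota^B_j$ is nonzero for \emph{every} choice of Nielsen-Thomsen bases, not just the obvious ones. This requires the basis-change analysis of \autoref{lma:key}(ii) — a change of basis alters the rotation map only by a difference $\widehat{\phi(h_k)}-\widehat{\psi(h_k)}$ of affine functions that is \emph{bounded} in terms of the bases of the source algebra $\mathfrak{q}_j$ alone, whereas the accumulated rotation through the $B$-system is \emph{unbounded} in $n$ (it scales like $l_n=4^n[n,n+1]$). So one must carefully separate the ``intrinsic'' unbounded part of the rotation from the basis-dependent bounded correction, and verify that the former survives in the inductive limit $\HH(B)$ — i.e.\ that the relevant affine functions are not absorbed into $\overline{\K_0(B)}$. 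Controlling $\overline{\K_0(B)}$ (equivalently, the tracial simplex of $B$ and the image of $\K_0$) on the interval summands is where the argument is most delicate, and is exactly the place where the ``system of splittings between corner algebras'' of \cite{GJL20} is being recovered in our language.
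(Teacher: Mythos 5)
Your part (ii) contains a genuine gap, and the ``main obstacle'' you identify is in fact a false target. You propose to show that the rotation map of $\iota^B_j$ is nonzero for \emph{every} choice of Nielsen-Thomsen bases; but since $\K_1(\iota^B_j)\colon\Z\to\Z$ is an isomorphism, each $\overline{\K}_1(\iota^B_j)$ is individually diagonalisable: pick any section $s_j$ of $\pi_{\mathfrak{q}_j}$, set $\overline{d_k}:=\overline{\K}_1(\iota^B_j)(s_j(k))$, and this defines a legitimate section of $\pi_B$ for which the rotation vanishes identically. So no quantitative lower bound on the rotation of a fixed $\iota^B_j$ can exist, and an argument built on it would fail. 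The only obstruction is to a \emph{single} section $s_1$ of $\pi_B$ working for all $j$ at once, and to exploit that you need a mechanism that plays the different $j$ against one fixed $s_1$. The paper's mechanism is the decomposition $B=\mathfrak{b}_j\oplus\mathfrak{q}_j$ of \autoref{lma:ideals}: the compositions $\mathfrak{q}_j\hookrightarrow B\twoheadrightarrow\mathfrak{b}_i$ are zero for $i\leq j$, so simultaneous diagonalisability forces $\overline{\K}_1(\pi_i)\bigl(s_1(1_\Z)\bigr)=0$ for \emph{all} $i$; this contradicts the computation of \cite[pp.~73--74]{GJL20} that for any section $s_1$ there is $m$ with $\Vert\overline{\K}_1(\pi_m)\circ s_1(1_\Z)\Vert\geq 3$ (this is where the unboundedness of $l_n$ enters, and the paper cites it rather than reproving it). Your sketch never articulates this reduction --- the complementary ideals $\mathfrak{b}_i$ and the quotient maps $\pi_i$ do not appear --- and the crucial quantitative claim (``bounded below by a fixed positive constant \dots\ cannot be compensated inside $\overline{\K_0}$'') is asserted rather than proved; that estimate is precisely the delicate content you would have to supply if you do not cite \cite{GJL20}.

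In part (i) your route is essentially the paper's (bases induced by the circle-block unitaries $u_j=\diag(\id_\T,1,\dots,1)$, then \autoref{prop:commutsquare}), but the intermediate claim that the connecting maps send basis elements to basis elements ``up to $\overline{DU}$'', so that the rotation maps of the $\phi_{nn+1}$ vanish at each finite stage, is too strong: the point-evaluation eigenvalues $f(t_n)$, $f(z_n)$, $f(\exp(i/r_n))$ contribute nonzero constant terms to the determinant, and at a finite stage $A_{n+1}$ these constants are generally not absorbed by $\overline{\K_0(A_{n+1})}$. The correct (and the paper's) statement is taken in the limit: one only needs $\overline{\Delta}\begin{psmallmatrix}\phi_{j\infty}(u_j) & 0\\ 0 & \phi^*_{1\infty}(u_1)\end{psmallmatrix}=0$ in $\HH(A)$, where the $\exp$/$\exp^{-1}$ contributions cancel and the remaining constants lie in $\overline{\K_0(A)}$. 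This part of your argument is fixable; part (ii) as planned is not, without the corner/projection bookkeeping described above.
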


\begin{proof}
(i) Let $j\geq 1$. We define $u_j:=\diag(\id_\T,\overset{[j,j]-1\text{-times}}{\overbrace{1,\dots,1}})\in M_{[j,j]}(C(\T))$, and we denote $\mathcal{C}_j $ to be the Nielsen-Thomsen basis of $\mathfrak{p}_j$ induced by the section $s_j\colon 1_\Z\mapsto \phi_{j\infty}(u_j)$.
Let us check that $\overline{\K}_1(\iota^A_j)$ is diagonalisable in the bases $\mathcal{C}_j,\mathcal{C}_1$. 
By \autoref{prop:commutsquare}, it is enough to have that \[\overline{\Delta}_{}
\begin{psmallmatrix}\phi_{j\infty}(u_j) & 0 \\
0 & \phi^*_{1\infty}(u_1)
\end{psmallmatrix}=0 \text{ in } \HH(A).\] This is readily deduced from the partial *-homomorphisms defining the connecting maps of $A$.

(ii) Let $j\geq 1$. Observe that $B=\mathfrak{q}_1\supseteq\mathfrak{q}_j\supseteq \mathfrak{q}_{j+1}$, and that $0=\mathfrak{b}_1\subseteq \mathfrak{b}_j\subseteq \mathfrak{b}_{j+1}$. Combined with \autoref{lma:ideals} (ii), we deduce that \[\theta^i_{j}\colon\mathfrak{q}_{j}\underset{\iota^B_{j}}{\lhook\joinrel\longrightarrow B}\overset{\pi_{i}}{\relbar\joinrel\twoheadrightarrow}\mathfrak{b}_{i} \text{ is the zero morphism, for any } 1\leq i\leq j.\] 
Now, assume that $\overline{\K}_1(\iota^B_j)$ is diagonalisable. By \autoref{prop:commutsquare}, there exist sections $s_j,s_1$ of $\overline{\K}_1(\mathfrak{q}_j)\twoheadrightarrow \K_1(\mathfrak{q}_j), \overline{\K}_1(B)\twoheadrightarrow \K_1(B)$ respectively, such that, for any $1\leq i\leq j$, the following diagram commutes.
\[
\xymatrix@!R@!C@R=26pt@C=30pt{
\overline{\K}_1(\mathfrak{q}_j)\ar@/^{-4pc}/[dd]_{0}\ar[r]^{}\ar[d]_{\overline{\K}_1(\iota^B_j)}& \K_1(\mathfrak{q}_j)\simeq \Z\ar[d]^{\K_1(\iota^B_j))=\id_\Z}\ar@/^{-1,2pc}/[l]_{s_j}\\
 \overline{\K}_1(B)\ar[d]_{\overline{\K}_1(\pi_i)}\ar[r]_{}& \K_1(B)\ar@/_{-1,2pc}/[l]^{s_1}\simeq \Z\\
 \overline{\K}_1(\mathfrak{b}_i)
}
\]

Assume that  $\{\overline{\K}_1(\iota^B_j)\}_{j\geq 1}$ are simultaneously diagonalisable in the bases $\{\mathcal{C}_j\}_{j\geq 1}$. The above implies that $\overline{\K}_1(\pi_i)\circ s_1$ is trivial, for all $i\geq 1$, where $s_1$ is the section induced by $\mathcal{C}_1$. However, it has been shown in \cite[pp. 73-74]{GJL20}, that for any section $s_1$ and any representative $u\in B$ of $s_1(1_\Z)$, there exists $m\in\N$ large enough, such that  $\Vert \overline{\K}_1(\pi_m)\circ s_1(1_\Z)\Vert\geq 3$, leading to a contradiction.
\end{proof}

\begin{cor}
There is no $\Cu^*$-isomorphism between $\Cu_{\overline{\K}_1}(A)$ and $\Cu_{\overline{\K}_1}(B)$. 
\end{cor}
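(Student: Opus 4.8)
The plan is to argue by contradiction via functoriality, using the ideal-wise information that the $\Cu_{\overline{\K}_1}$-invariant carries. Suppose there were a $\Cu^*$-isomorphism $\Phi\colon \Cu_{\overline{\K}_1}(A)\xrightarrow{\ \simeq\ }\Cu_{\overline{\K}_1}(B)$. Its first component is a (scaled) $\Cu$-isomorphism $\alpha_0\colon \Cu(A)\simeq\Cu(B)$, so by \autoref{thm:Cuiso} we have $\alpha_0(\mathfrak p_j)=\mathfrak q_j$ for every $j\geq 1$, hence the ideals $\mathfrak p_j$ of $A$ are matched bijectively and order-compatibly with the ideals $\mathfrak q_j$ of $B$. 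The point is that a $\Cu^*$-isomorphism of the merged semigroups restricts, over each ideal, to an isomorphism of the $\overline{\K}_1$-data compatible with the lattice structure. So $\Phi$ restricts to isomorphisms $\overline{\K}_1(\mathfrak p_j)\simeq\overline{\K}_1(\mathfrak q_j)$ and, crucially, these fit into a commuting diagram with the maps induced by the inclusions $\mathfrak p_j\subseteq\mathfrak p_1=A$ and $\mathfrak q_j\subseteq\mathfrak q_1=B$, i.e. with $\overline{\K}_1(\iota^A_j)$ and $\overline{\K}_1(\iota^B_j)$.

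First I would make precise the statement ``a $\Cu^*$-isomorphism of $\Cu_{\overline{\K}_1}$-constructions induces a lattice-compatible family of $\overline{\K}_1$-isomorphisms''. This is essentially the functoriality encoded in \autoref{thm:exactCUK}: the diagram $D(I,\phi,J)$ exhibits $\Cu_{\overline{\K}_1}(\phi)$ as compatible with the split-exact rows $0\to\Cu(I)\to\Cu_{\overline{\K}_1}(I)\to\overline{\K}_1(I)\to0$, and for an isomorphism $\Phi$ the induced maps on the $\overline{\K}_1$-quotients are isomorphisms $\overline{\K}_1(I)\simeq\overline{\K}_1(I_{\Phi})$ for each $I\in\Lat_f(A)$, naturally in inclusions of ideals. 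Applying this to $I=\mathfrak p_j\subseteq J=A$ and using $\mathfrak (p_j)_\Phi=\mathfrak q_j$ gives a commuting square relating $\overline{\K}_1(\iota^A_j)$ to $\overline{\K}_1(\iota^B_j)$ through the isomorphisms $\overline{\K}_1(\mathfrak p_j)\simeq\overline{\K}_1(\mathfrak q_j)$ and $\overline{\K}_1(A)\simeq\overline{\K}_1(B)$.

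Next I would translate this into a statement about diagonalisability. By \autoref{thm:diago}(i), the family $\{\overline{\K}_1(\iota^A_j)\}_{j\geq1}$ is simultaneously Nielsen-Thomsen diagonalisable, say in bases $\{\mathcal C_j\}$ with $\mathcal C_1$ a basis of $A$. Transporting these bases through the isomorphisms supplied by $\Phi$ yields Nielsen-Thomsen bases $\{\mathcal D_j\}$ of the $\mathfrak q_j$ (with $\mathcal D_1$ a basis of $B$) in which all the $\overline{\K}_1(\iota^B_j)$ are simultaneously diagonal — because Nielsen-Thomsen diagonalisability is a property preserved under pushing forward the section/retract data along isomorphisms of the sequences, and the squares from the previous step precisely identify the two diagonalisation problems. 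But this contradicts \autoref{thm:diago}(ii), which says the family $\{\overline{\K}_1(\iota^B_j)\}_{j\geq1}$ is \emph{not} simultaneously diagonalisable. Hence no such $\Phi$ exists.

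The main obstacle I anticipate is the bookkeeping in the step that turns ``$\Phi$ is an isomorphism of merged semigroups'' into ``the diagonalisation data transports'': one must check not merely that each $\overline{\K}_1(\mathfrak p_j)\simeq\overline{\K}_1(\mathfrak q_j)$ but that the whole commuting ladder (all inclusions $\mathfrak p_j\subseteq\mathfrak p_i$ for $i\le j$, and $\mathfrak p_j\subseteq A$, together with the complementary splittings $A=\mathfrak a_j\oplus\mathfrak p_j$ of \autoref{lma:ideals}(iii)) is respected, so that ``simultaneous'' diagonalisability genuinely transfers. A secondary point is the scaling/unitality: $\Phi$ is a scaled $\Cu^*$-isomorphism, so one should confirm it sends $[1_A]$-data to $[1_B]$-data as in the proof of \autoref{thm:Cuiso}, which is what pins down $\alpha_0(\mathfrak p_j)=\mathfrak q_j$ rather than merely $\alpha_0$ permuting the $\mathfrak q_j$'s; once the simple ideals are matched by their $\K_0$-invariants $\Z[\tfrac1{p_j}]$ this is routine, but it must be invoked.
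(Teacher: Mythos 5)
Your proposal is correct and follows essentially the same route as the paper: extract the $\Cu$-component $\alpha_0$, use \autoref{thm:Cuiso} to match $\mathfrak{p}_j$ with $\mathfrak{q}_j$, invoke the exactness/naturality of the $\Cu_{\K}$-construction (\autoref{thm:exactCUK}) to get $\overline{\K}_1$-isomorphisms fitting into commuting squares with $\overline{\K}_1(\iota^A_j)$ and $\overline{\K}_1(\iota^B_j)$, and then contradict \autoref{thm:diago} by transporting simultaneous Nielsen--Thomsen diagonalisability. The transport-of-bases step you spell out is exactly the (implicit) final step in the paper's proof, so no further changes are needed.
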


\begin{proof} 
Assume that there exists a $\Cu^*$-isomorphism $\alpha\colon\Cu_{\overline{\K}_1}(A)\longrightarrow \Cu_{\overline{\K}_1}(A)$. We know that $\alpha$ induces a $\Cu$-isomorphism $\alpha_0\colon \Cu(A)\longrightarrow \Cu(B)$, which in turn induces a lattice isomorphism $\theta_{\alpha_0}\colon\Lat(A)\simeq\Lat(B)$. Furthermore, by \autoref{thm:Cuiso}, we know that $\theta_{\alpha_0}(\mathfrak{p}_j)= \mathfrak{q}_j$, for any $j\geq 1$.

Now, by the properties of the $\Cu_\K$-constructions recalled in \autoref{thm:exactCUK} (ii)/(iii) -see also \cite[Theorem 4.5]{C23b}-, we know that the following diagram is commutative with exact rows
\[
\xymatrix{
0\ar[r]^{} & \Cu(\mathfrak{p}_j)\ar[d]_{{\alpha_0}_{|}}\ar[r]^{} & \Cu_{\overline{\K}_1}(\mathfrak{p}_j) \ar[d]_{{\alpha}_{|}}\ar[r]^{} & \overline{\K}_1(\mathfrak{p}_j) \ar[d]^{{\alpha_{\mathrm{max}}}_{|}}\ar[r]^{} & 0
\\
0\ar[r]^{} & \Cu(\mathfrak{q}_j)\ar[r]_{} & \Cu_{\overline{\K}_1}(\mathfrak{q}_j)\ar[r]_{} & \overline{\K}_1(\mathfrak{q}_j)\ar[r]^{} & 0
} 
\] 
where the vertical arrows are isomorphisms in their respective categories obtained via restriction. 

In particular, we obtain that 
\[
\overline{\K}_1(\mathfrak{p}_j)\simeq \overline{\K}_1(\mathfrak{q}_j), \text{ for any } j\geq 1.
\]
As a result, the next diagram commutes.
\[
\xymatrix{
\overline{\K}_1(\mathfrak{p}_j)\ar[r]^{\simeq}\ar[d]_{\overline{\K}_1(\iota_j^A)}& \overline{\K}_1(\mathfrak{q}_j)\ar[d]^{\overline{\K}_1(\iota_j^B)}\\
 \overline{\K}_1(A)\ar[r]^{\simeq}& \overline{\K}_1(B)
 }
\]

Nevertheless, \autoref{thm:diago} tells us that $\{\overline{\K}_1(\iota_j^A)\}_j$ are simultaneously Nielsen-Thomsen diagonalisable, while $\{\overline{\K}_1(\iota_j^B)\}_j$ are not, which leads to a contradiction.
\end{proof}

\subsection{The Robert example(s) revisited} In \cite{C25}, the author has exposed a pair of non-unitarily equivalent *-homomorphisms from $C(\T)$ into $C[0,1]\otimes M_{2^\infty}$. (These were based on private communication with L. Robert.) 
The original distinction relied on the information given by de la Harpe-Skandalis determinant of the identity map. We intend to generalize these constructions and (re)state similar results via our methods. We also obtain additional information on this family, by measuring how far these *-homomorphisms are from one another, with respect to the metric $\mathfrak{d}$.

We start by recalling the construction of the two *-homomorphisms exposed in \cite[Section 4.A]{C25}. 

$\bullet$\,\,\textbf{Construction of the unitary elements of $C[0,1]\otimes M_{2^\infty}$.} 
Recall that $M_{2^\infty}$ can be written as the inductive limit of $(M_{2^n},\phi_{nm})_n$ where $\phi_{nn+1}:M_{2^n}\longrightarrow M_{2^{n+1}}$ sends $a\longmapsto \begin{psmallmatrix}a \\&a \end{psmallmatrix}$. 

For any $n\in \N$ we consider the following unitary element of $M_{2^n}$ 
\[w_n:= \begin{psmallmatrix}1 \\&e^{2i\pi/2^n} \\&&\ddots\\&&&e^{2i\pi(2^n-1)/2^n}\end{psmallmatrix}.\]  
It can be argued that the sequence $(\phi_{n\infty}(w_n))_n$ converges towards a unitary $w\in M_{2^\infty}$ with full spectrum. (See \cite[4.A]{C25}.) Finally, we define unitary elements of $C[0,1]\otimes M_{2^\infty}$ for any $k\in\N$ as follows \[u_0:=1_{[0,1]}\otimes w \quad\quad\text{and}\quad\quad u_k:=e^{2i\pi k\id_{[0,1]}}\otimes w.\] 
 We also consider the induced *-homomorphisms $\varphi_{u_k}\colon C(\T)\longrightarrow C[0,1]\otimes M_{2^\infty}$, mapping $\id_\T\mapsto u_k$.
\begin{thm} 
We compute that \[
d^*_{\Cu}(\Cu_{\K_1}(\varphi_{u_k}),\Cu_{\K_1}(\varphi_{u_l}))=0\quad \text{ and }\quad \mathfrak{d}(\overline{\K}_1(\varphi_{u_k}),\overline{\K}_1(\varphi_{u_l}))=\frac{\vert k-l\vert}{2}.
\]
\end{thm}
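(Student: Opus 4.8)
The plan is to evaluate the functors $\Cu$, $\Cu_{\K_1}$, $\HH$ and $\overline{\K}_1$ on the morphisms $\varphi_{u_k}$, exploiting two features of $w$: it has full spectrum $\T$, and its spectral measure relative to the unique trace $\tau$ of $M_{2^\infty}$ is the normalised Haar measure $\mu$ on $\T$. The latter holds because $w=\lim_n\phi_{n\infty}(w_n)$ in norm while the eigenvalue distribution of $w_n$ (with respect to the normalised trace of $M_{2^n}$) is the uniform measure on the $2^n$-th roots of unity, which converges weak$^*$ to $\mu$, and norm convergence of unitaries forces weak$^*$ convergence of the associated spectral measures. Translation invariance of $\mu$ is then the engine of the whole computation.

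\emph{Step 1: $d^*_{\Cu}(\Cu_{\K_1}(\varphi_{u_k}),\Cu_{\K_1}(\varphi_{u_l}))=0$.} Write $B:=C([0,1])\otimes M_{2^\infty}$. For $g\in C(\T)_+$ one has $\varphi_{u_k}(g)=g(u_k)\in B_+$ with $(g(u_k))(t)=g(e^{2i\pi kt}w)$; by translation invariance of $\mu$, $d_\tau\big((g(u_k))(t)\big)=\mu(\supp g)$ for every $t\in[0,1]$ and every $k$, and $0$ is isolated in the spectrum of $(g(u_k))(t)$ only if $g$ is strictly positive. Since $B$ is a stable-rank-one $C([0,1])$-algebra whose non-compact Cuntz classes are classified by their rank functions, $[\varphi_{u_k}(g)]\in\Cu(B)$ depends only on $\mu(\supp g)$ (and equals $[1_B]$ for $g$ strictly positive), hence is independent of $k$. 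As the classes $[g]$, $g\in C(\T)_+$, generate $\Cu(C(\T))$, this gives $\Cu(\varphi_{u_k})=\Cu(\varphi_{u_l})$, i.e. $\epsilon_0=0$. For the refined metric, note that for every nonempty open $U\subseteq\T$ the set $\varphi_{u_\ast}(C_0(U))$ generates $B$ as an ideal (again by fullness of the spectrum of $u_\ast$ over $[0,1]$ and simplicity of $M_{2^\infty}$), so all vertices of the fiber diagrams $\mathcal F(\mymathbb{1}_U,\mymathbb{1}_{U_r})$ attached to ideals of $B$ vanish, because $\K_1(B)=0$; thus every such diagram commutes and \autoref{prop:lwrbnd}(ii) yields $d^*_{\Cu}(\Cu_{\K_1}(\varphi_{u_k}),\Cu_{\K_1}(\varphi_{u_l}))=0$.

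\emph{Step 2: $\mathfrak{d}(\overline{\K}_1(\varphi_{u_k}),\overline{\K}_1(\varphi_{u_l}))=\tfrac{|k-l|}{2}$.} I would take the canonical Nielsen-Thomsen basis $\mathcal{C}_0$ of $C(\T)$ and the (unique, trivial) basis $\mathcal{D}_0$ of $B$, and compute the three summands of \autoref{dfn:adequatemetric}. (a) $\K_1(B)=0$, so $\K_1(\varphi_{u_k})=\K_1(\varphi_{u_l})=0$ and the $d_{\mathrm{triv}}$-term vanishes. (b) $\tau_t(\varphi_{u_k}(f))=\tau\big(f(e^{2i\pi kt}w)\big)=\int_\T f(e^{2i\pi kt}z)\,d\mu(z)=\int_\T f\,d\mu$ by translation invariance, so $\Aff T_1(\varphi_{u_k})$ sends $f$ to the constant function $\int f\,d\mu$, independently of $k$; since constants lie in $\overline{\K_0(B)}$ this forces $\HH(\varphi_{u_k})=0$, and the $d(\HH(\cdot),\HH(\cdot))$-term vanishes. (c) As $\K_1(\varphi_{u_k})=\K_1(\varphi_{u_l})$, \autoref{lma:key}(i) shows the relative rotation map sends the generator $1_\Z$ of $\K_1(C(\T))\simeq\Z$ to $\overline{\Delta}(\diag(u_k,u_l^*))=\overline{\Delta}(u_k)+\overline{\Delta}(u_l^*)=\overline{\Delta}(u_ku_l^*)$, using additivity and stabilisation invariance of $\overline{\Delta}$ on $U_0^\infty(B)$ (both $u_k,u_l$ lie in $U_0(B)$ since $\K_1(B)=0$ and $B$ has stable rank one). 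Now $u_ku_l^*=e^{2i\pi(k-l)\id_{[0,1]}}\otimes 1=e^{2i\pi h}$ with $h=(k-l)\,\id_{[0,1]}\otimes 1\in B_{\mathrm{sa}}$, so this value is $[\widehat{h}]\in\HH(B)$. Identifying $\Aff T_1(B)\simeq C_\R([0,1])$ and $\overline{\K_0(B)}\simeq\R\cdot 1$ (constants, since $\K_0(B)\simeq\Z[\tfrac12]$), hence $\HH(B)\simeq C_\R([0,1])/\R$, the function $\widehat{h}$ is $t\mapsto(k-l)t$, whose quotient norm is $\tfrac12(\max-\min)=\tfrac{|k-l|}{2}$. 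Thus the rotation contribution — which determines $d_{R,\mathcal{C}_0\mathcal{D}_0}(\varphi_{u_k},\varphi_{u_l})$, the rotation map being a homomorphism on $\Z$ pinned down by its value on $1_\Z$ — equals $\tfrac{|k-l|}{2}$, and summing the three terms gives the claim.

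The step I expect to be the main obstacle is the $\Cu$-level computation of Step 1: one must deal with the fact that spectral projections of $e^{2i\pi kt}w$ need not vary norm-continuously in $t$, so one works only with continuous $g$ supported in $U$ and appeals to the description of the non-compact part of $\Cu(C([0,1])\otimes M_{2^\infty})$ by rank functions, and one must secure the precise identification of the spectral measure of $w$ with Haar measure from its inductive description. Once those are in place, Step 2 follows almost immediately from the de la Harpe--Skandalis calculus recorded in \autoref{lma:key} together with translation invariance of $\mu$, the only remaining routine checks being the standard identifications of $\Aff T_1$ and $\overline{\K_0}$ for $C(\T)$ and for $C([0,1])\otimes M_{2^\infty}$.
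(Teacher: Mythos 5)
Your proposal is correct, but it reaches both computations by a route that differs from the paper's in instructive ways. For the first equality the paper never works in the limit algebra directly: it estimates $d_{\Cu}(\Cu(\varphi_{u_{k,n}}),\Cu(\varphi_{u_{l,n}}))\leq 1/2^n$ at the finite stages $u_{k,n}=e^{2i\pi k\id}\otimes w_n$, observes that the relevant image ideals are all of $A$ so the fiber diagrams commute trivially ($\K_1$ vanishing), and then passes to the limit via $d_U((\id\otimes\phi_{n\infty})(u_{k,n}),u_k)\to 0$; you instead compute exactly in the limit, using that the spectral measure of $w$ is Haar measure and its translation invariance to get $\Cu(\varphi_{u_k})=\Cu(\varphi_{u_l})$ on the nose, and then run the same fullness-plus-$\K_1(B)=0$ argument for the fiber diagrams. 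Your route buys an exact statement with no intertwining-type limit argument, but the one place to be careful is your appeal to ``non-compact Cuntz classes are classified by their rank functions'': for $C([0,1])\otimes M_{2^\infty}$ one really needs the pointwise compact/soft distinction (e.g.\ via $\Cu(C([0,1],M_{2^\infty}))\simeq\Lsc([0,1],\Cu(M_{2^\infty}))$ or strict comparison for purely noncompact elements); this is harmless here because for non-invertible $g(u_k)$ every fibre has spectrum $g(\T)=[0,\max g]$, hence is soft, but the blanket statement as written is not literally true and should be replaced by this fibrewise argument. For the second equality your decomposition into the three summands of \autoref{dfn:adequatemetric} and the use of \autoref{lma:key}(i) with the canonical basis $\mathcal{C}_0$ is exactly the paper's scheme (the paper gets the $\HH$-term from \autoref{thm:Cufinernose} rather than your direct trace computation -- both work); the genuinely nicer point in your argument is the cancellation $u_ku_l^*=e^{2i\pi(k-l)\id_{[0,1]}}\otimes 1$, which gives $\overline{\Delta}(\diag(u_k,u_l^*))=[t\mapsto(k-l)t]$ without ever computing $\overline{\Delta}(u_k)=[t\mapsto C+kt]$ individually, whereas the paper imports that computation (and the constant $C=\tau_M(h_w)$) from \cite{C23b}. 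Finally, note that your closing claim that the rotation term is ``pinned down by its value on $1_\Z$'' uses the same implicit convention as the paper's own proof (evaluating $d_{R,\mathcal{C}_0\mathcal{D}}$ on the generator of $\K_1(C(\T))\simeq\Z$ rather than taking the literal supremum over all of $\Z$), so it is consistent with the intended reading, though worth making explicit.
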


\begin{proof}
Let us fix $A:=C[0,1]\otimes M_{2^\infty}$. Let $k,l\in \N$ and $n\in\N$. We consider $u_{k,n}:=e^{2i\pi k\id_{[0,1]}}\otimes w_n$ and $u_{l,n}:=e^{2i\pi l\id_{[0,1]}}\otimes w_n$. We proceed similarly as in the proof of \autoref{lma:computeCu}, to compute that \[d_{\Cu}(\Cu(\varphi_{u_{k,n}}),\Cu(\varphi_{u_{l,n}}))\leq 1/2^n.\]
(We refer the reader to \cite[4.A]{C25} for an explicit computation.)
Now, let $U\subseteq \T$ be an open set.  It is readily observed that the ideals generated by $\Cu(\varphi_{u_{k,n}})(\mymathbb{1}_{U_{\frac{1}{2^n}}})$ and $\Cu(\varphi_{u_{l,n}})(\mymathbb{1}_{U_{\frac{1}{2^n}}})$ are in fact . Combined with the fact that $A$ has trivial $\K_1$-group, we deduce that the fiber diagram of $(\Cu(\varphi_{u_{k,n}}),\Cu(\varphi_{u_{l,n}}))$ at coordinates $(\mymathbb{1}_U,\mymathbb{1}_{U_{\frac{1}{2^n}}})$ trivially commutes. This yields
\[
d^*_{\Cu}(\Cu_{\K_1}(\varphi_{u_{k,n}}),\Cu_{\K_1}(\varphi_{u_{l,n}}))\leq 1/2^n.
\]
We remark that for any $k\in\N$, $d_U((\id\otimes \phi_{n\infty})(u_{k,n}),u_k)\underset{n\rightarrow \infty}{\longrightarrow}0$. We now deduce the first computation by a standard argument.

Next, let us compute the distance $\mathfrak{d}$ between $\overline{\K}_1(\varphi_{u_k})$ and $\overline{\K}_1(\varphi_{u_l})$. It is immediate that $\K_1(\varphi_{u_k})=\K_1(\varphi_{u_l})$ is the trivial morphism. Also, we know that $d_{\Cu}(\Cu(\varphi_{u_k}),\Cu(\varphi_{u_l}))=0$, which implies that $d(\HH(\varphi_{u_k}),\HH(\varphi_{u_l}))=0$, by \autoref{thm:Cufinernose}. Let $\mathcal{C}_0$ be the canonical Nielsen-Thomsen basis of $C(\T)$ and let $\mathcal{D}$ be any Nielsen-Thomsen basis of $A$. 

From \autoref{lma:key}, we get that $R_{\mathcal{C}_0\mathcal{D}}(\varphi_{u_k})-R_{\mathcal{C}_0\mathcal{D}}(\varphi_{u_l})\colon \Z\longrightarrow \HH(A)$ sends $1_\Z\longmapsto \overline{\Delta}\begin{psmallmatrix} u_k & 0 \\
0 & u_l
\end{psmallmatrix}=\overline{\Delta}(u_k)-\overline{\Delta}(u_l)$. Furthermore, it is well-known that $\Aff T_1(A)\simeq C([0,1],\R)$, and that $\K_0(A)\simeq \N[\frac{1}{2}]$. We deduce that \[\HH(A)\simeq C([0,1],\R)/\{\text{constant functions}\}.\] 

Finally, standard arguments show that the quotient-norm in $C([0,1],\R)/\{\text{constant functions}\}$, is given by $\Vert [f]\Vert=\frac{1}{2}(\max f(t)-\min f(t))$, for any $f\in C([0,1],\R)$. 

Now, let $C:=\tau_M(h_w)\in \R$, where $\tau_M$ is the unique trace on $M_{2^\infty}$ and $h_w\in M_{2^\infty}$ is any self-adjoint element such that $e^{2i\pi h_w}=w$. The computations done in \cite[4.A]{C25} give us that 
\[
\overline{\Delta}(u_k)=[t\longmapsto C+kt]_{\HH(A)}
\]
for any $k\in\N$. The second computation readily follows.
\end{proof}

\begin{cor}
The *-homomorphisms $\{\varphi_{u_k}\colon C(\T)\rightarrow C[0,1]\otimes M_{2^\infty}\}_k$ all agree on the Cuntz semigroup and the unitary Cuntz semigroup. 

Yet, they are pairwise not approximately unitarily equivalent. More particularly, for any distinct $k,l\in \N$, $\varphi_{u_k}$ and $\varphi_{u_l}$ are distinguished by the Hausdorffized algebraic $\K_1$-group. 

(A fortiori, by the Hausdorffized  unitary Cuntz semigroup.)
\end{cor}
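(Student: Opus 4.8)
The plan is to read off all three assertions from the two equalities established in the preceding theorem, namely
\[
d^*_{\Cu}\bigl(\Cu_{\K_1}(\varphi_{u_k}),\Cu_{\K_1}(\varphi_{u_l})\bigr)=0
\qquad\text{and}\qquad
\mathfrak{d}\bigl(\overline{\K}_1(\varphi_{u_k}),\overline{\K}_1(\varphi_{u_l})\bigr)=\frac{\vert k-l\vert}{2}.
\]
Since $d^*_{\Cu}$ is an honest metric on $\Hom_{\Cu_{\mathfrak{w}}}(\Cu_{\K_1}(C(\T)),\Cu_{\K_1}(A))$, the vanishing on the left forces $\Cu_{\K_1}(\varphi_{u_k})=\Cu_{\K_1}(\varphi_{u_l})$ for all $k,l\in\N$; reading off the first (that is, the $\Cu$-) component of a $\Cu_{\K_1}$-morphism — which by definition is the associated $\Cu$-morphism — also yields $\Cu(\varphi_{u_k})=\Cu(\varphi_{u_l})$. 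Hence all of the $\varphi_{u_k}$ agree on the Cuntz semigroup and on the unitary Cuntz semigroup.

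Next fix distinct $k,l\in\N$. As $\mathfrak{d}$ is a metric on the set of group homomorphisms $\overline{\K}_1(C(\T))\to\overline{\K}_1(A)$, the value $\mathfrak{d}(\overline{\K}_1(\varphi_{u_k}),\overline{\K}_1(\varphi_{u_l}))=\frac{\vert k-l\vert}{2}>0$ already gives $\overline{\K}_1(\varphi_{u_k})\neq\overline{\K}_1(\varphi_{u_l})$, i.e.\ the asserted separation at the level of the Hausdorffized algebraic $\K_1$-group. To pass to the Hausdorffized unitary Cuntz semigroup, observe that each $\varphi_{u_k}$ is unital, so the smallest ideal of $A$ containing $\varphi_{u_k}(C(\T))$ is $A$ itself; hence the data $\bigl(\Cu(\varphi_{u_k}),\{\overline{\K}_1(I\to I_{\varphi_{u_k}})\}_I\bigr)$ determining $\Cu_{\overline{\K}_1}(\varphi_{u_k})$ contains, in the case $I=C(\T)$, the component $\overline{\K}_1(\varphi_{u_k})$ itself. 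Thus $\overline{\K}_1(\varphi_{u_k})\neq\overline{\K}_1(\varphi_{u_l})$ forces $\Cu_{\overline{\K}_1}(\varphi_{u_k})\neq\Cu_{\overline{\K}_1}(\varphi_{u_l})$, which is the ``a fortiori'' statement.

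It remains to rule out approximate unitary equivalence, and the plan is to use that $\overline{\K}_1$ is invariant under approximate unitary equivalence of $*$-homomorphisms: were $\varphi_{u_k}\sim_{\mathrm{aue}}\varphi_{u_l}$, we would get $\overline{\K}_1(\varphi_{u_k})=\overline{\K}_1(\varphi_{u_l})$, contradicting $\mathfrak{d}=\frac{\vert k-l\vert}{2}>0$. To justify this invariance, suppose $\varphi_{u_l}(f)=\lim_n \widetilde u_n\,\varphi_{u_k}(f)\,\widetilde u_n^{*}$ in norm for every $f\in C(\T)$, with $\widetilde u_n\in U(A)$. For any $v\in U^\infty(C(\T))$ the element $\widetilde u_n\,\varphi_{u_k}(v)\,\widetilde u_n^{*}\,\varphi_{u_k}(v)^{*}$ (formed after the appropriate matrix amplification) is a multiplicative commutator of unitaries, hence lies in $DU^\infty(A)$, so $\overline{\widetilde u_n\,\varphi_{u_k}(v)\,\widetilde u_n^{*}}=\overline{\varphi_{u_k}(v)}$ in $\overline{\K}_1(A)$ for every $n$; since the quotient map $U^\infty(A)\twoheadrightarrow U^\infty(A)/\overline{DU^\infty(A)}=\overline{\K}_1(A)$ is continuous with Hausdorff target, letting $n\to\infty$ gives $\overline{\varphi_{u_l}(v)}=\overline{\varphi_{u_k}(v)}$, that is, $\overline{\K}_1(\varphi_{u_k})=\overline{\K}_1(\varphi_{u_l})$. (Alternatively, one chains the inequalities recorded at the end of \autoref{sec:Sec4} — in particular $\mathfrak{d}^*_{\Cu}(\Cu_{\overline{\K}_1}(\varphi_{u_k}),\Cu_{\overline{\K}_1}(\varphi_{u_l}))\le d_U(\varphi_{u_k},\varphi_{u_l})$ — and evaluates the fiber-diagram metric at the unit ideal of $C(\T)$ to bound $d_U(\varphi_{u_k},\varphi_{u_l})$ below by a positive multiple of $\vert k-l\vert$.) The only mildly delicate point here is this continuity/invariance argument; everything else is a formal consequence of the metric framework of \autoref{sec:Sec3} and \autoref{sec:Sec4}, so I do not foresee a genuine obstacle.
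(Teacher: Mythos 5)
Your proposal is correct and takes essentially the paper's route: the paper states this corollary as an immediate consequence of the preceding theorem's two metric computations, and your write-up simply fills in those formal deductions (vanishing of $d^*_{\Cu}$ gives agreement on $\Cu$ and $\Cu_{\K_1}$, positivity of $\mathfrak{d}$ separates the $\overline{\K}_1$-morphisms and hence the $\Cu_{\overline{\K}_1}$-morphisms via the component at the unit ideal) together with the standard commutator/Hausdorff-quotient argument showing $\overline{\K}_1$ is invariant under approximate unitary equivalence, which is exactly the fact underlying the paper's chain of inequalities ending in $d_U$. The only imprecise point is the parenthetical alternative: a lower bound on $d_U(\varphi_{u_k},\varphi_{u_l})$ by itself does not rule out approximate unitary equivalence (which allows conjugating by unitaries before measuring $d_U$); one must additionally use that unitary conjugation acts trivially on these invariants, which is precisely the commutator observation your main argument already supplies.
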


\subsection{A novel example}  We end the manuscript with a novel example that illustrates the necessity of the Hausdorffized unitary Cuntz semigroup in order to classify *-homomorphisms from $C(\T)$. We exhibit a pair of unitary elements $u,v$ of an $\AI$-algebra $A$, whose induced *-homomorphisms $\varphi_u,\varphi_v$ are shown to agree on both the unitary Cuntz semigroup and the Hausdorffized algebraic $\K_1$-group. (This, in turn, implies that they coincide on the Cuntz semigroup and the $\K_1$-group.)  Nevertheless, $\varphi_u$ and $\varphi_v$ are not approximately unitarily equivalent, as they are distinguished by the Hausdorffized unitary Cuntz semigroup and thereby, showcases its ability to distinguish *-homomorphisms that cannot be separated by any of the other invariants considered in this study.

$\bullet$\,\,\textbf{Construction of $A$ and its unitary elements.} Let us consider $A$ to be the unital $\AI$-algebra obtained as the inductive limit of $(C[0,1]\otimes M_{2^n},\phi_{nn+1})_{n\in\N}$, where $\phi_{nn+1}\colon C[0,1]\otimes M_{2^n}\longrightarrow C[0,1]\otimes M_{2^{n+1}}$ sends $f\mapsto \begin{psmallmatrix} f \\& f(0) \end{psmallmatrix}$.

 Let us consider two piecewise-linear functions $f,g\colon [0,1]\longrightarrow \R$, as follows. 

(i) Set $f(0)=0$, $f(\frac{1}{2})=\frac{1}{4}$, $f(1)=0$, and define $f$ to be linear between these points.

(ii) Set $g(0)=0$, $g(\frac{1}{2})=0$, $g(1)=0$, $g(\frac{1}{4})=\frac{1}{4}$, $g(\frac{3}{4})=\frac{1}{4}$, and define $g$ to be linear between these points.

For any $n\in\N$, we consider the following diagonal unitary elements of $C[0,1]\otimes M_{2^n}$.
\[
u_n:= \diag(
e^{2i\pi f}, \lambda_{1,n}, \dots, \lambda_{2^{n-1}-1,n}, e^{i\pi}e^{2i\pi g}, e^{i\pi}\lambda_1, \dots, e^{i\pi}\lambda_{2^{n-1}-1,n} )\quad\quad \text{and} \quad\quad v_n:=e^{i\pi}u_n
\]  
where $\lambda_{k,n}:=(e^{2i\pi k/2^{n-1}})^{\frac{1}{4}}$, for any $1\leq k\leq 2^{n-1}-1$.

As in the previous example, it can be argued that $d_U(\phi_{nm}(u_n),u_m)\leq 1/2^{n-1}-1/2^{m-1}$. (See also the original argument in \cite[4.A]{C25}. Roughly speaking, both $\phi_{nm}(u_n)$ and $u_m$ have the same two \textquoteleft moving\textquoteright\ eigenvalues, and their \textquoteleft fixed\textquoteright\ eigenvalues can be paired to be at distance at most $1/2^{n-1}-1/2^{m-1}$.) 
Consequently, the sequence $(\phi_{n\infty}(u_n))_n$ is Cauchy for $d_U$ and hence, converges towards a unitary element $u$ in $A$. 
Similarly, the sequence $(\phi_{n\infty}(v_n))_n$ converges towards a unitary element $v$ in $A$. We observe that $\spectrum(u)=\spectrum(v)=[1,e^{2i\pi/4}] \cup [e^{i\pi},e^{i\pi}e^{2i\pi/4}]\subseteq \T$. 

\begin{thm} 
We compute that \[
\left\{\begin{array}{ll}
d^*_{\Cu}(\Cu_{\K_1}(\varphi_{u}),\Cu_{\K_1}(\varphi_{v}))=0\\
\mathfrak{d}(\overline{\K}_1(\varphi_{u}),\overline{\K}_1(\varphi_{v}))=0\\
\mathfrak{d}^*_{\Cu}(\Cu_{\overline{\K}_1}(\varphi_{u}),\Cu_{\overline{\K}_1}(\varphi_{v}))\geq 1/8
\end{array}
\right.
\]

As a consequence, $\varphi_u$ and $\varphi_v$ are not approximately unitarily equivalent even though they agree on the unitary Cuntz semigroup and the Hausdorffized algebraic $\K_1$-group.
\end{thm}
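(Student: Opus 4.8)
The plan is to reduce (i) and (ii) to finite-stage estimates on the building blocks $C([0,1])\otimes M_{2^n}$, and to handle (iii) through the fiber-diagram description of $\mathfrak{d}^*_{\Cu}$ (\autoref{dfn:metricK}). Write $\varphi_{u_n},\varphi_{v_n}\colon C(\T)\to C([0,1])\otimes M_{2^n}$ for the $*$-homomorphisms induced by $u_n$ and $v_n=e^{i\pi}u_n$. The key observation is that $u_n$ and $v_n$ share the same multiset of scalar eigenvalues $\{\lambda_{k,n}\}_k\uplus\{e^{i\pi}\lambda_{k,n}\}_k$, and differ only in their two ``moving'' eigenvalues, which stay inside the arcs $e^{2i\pi[0,1/4]}$ and $e^{i\pi}e^{2i\pi[0,1/4]}$; since the $\lambda_{k,n}$ become $1/2^{n+1}$-dense in these arcs, the moving discrepancy of $\varphi_{u_n}$ and $\varphi_{v_n}$ on an open set $U$ is absorbed after inflating $U$ by $r_n\to 0$, exactly as in \cite[4.A]{C23b}; this gives $d_{\Cu}(\Cu(\varphi_{u_n}),\Cu(\varphi_{v_n}))\to 0$. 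As the interval algebras have trivial $\K_1$, the fiber diagrams of \autoref{dfn:metricK} attached to these pairs commute, so $d^*_{\Cu}(\Cu_{\K_1}(\varphi_{u_n}),\Cu_{\K_1}(\varphi_{v_n}))\to 0$; since $d_U((\id\otimes\phi_{n\infty})(u_n),u)\to 0$ (and likewise for $v$), a standard passage to the limit yields $d^*_{\Cu}(\Cu_{\K_1}(\varphi_u),\Cu_{\K_1}(\varphi_v))=0$, and in particular $\Cu(\varphi_u)=\Cu(\varphi_v)$.

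For (ii), $\Cu(\varphi_u)=\Cu(\varphi_v)$ forces $\HH(\varphi_u)=\HH(\varphi_v)$ by \autoref{thm:Cufinernose}, and $\K_1(\varphi_u)=\K_1(\varphi_v)=0$ since $\K_1(A)=0$. Taking $\mathcal{C}_0$ to be the canonical Nielsen--Thomsen basis of $C(\T)$ and $\mathcal{D}$ any basis of $A$, \autoref{lma:key}(i) identifies the relative rotation map by $1_\Z\mapsto\overline{\Delta}(u)-\overline{\Delta}(v)$; since $v=e^{i\pi}u=(-1_A)u$ and $\overline{\Delta}$ is a group homomorphism, this equals $-\overline{\Delta}(-1_A)=-[\widehat{h}]$ with $h=\tfrac12 1_A$, the class of a constant affine function, hence $0$ in $\HH(A)$ because multiples of $[1_A]$ lie in $\overline{\K_0(A)}$. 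Thus $d_{R,\mathcal{C}_0\mathcal{D}}(\varphi_u,\varphi_v)=0$ and $\mathfrak{d}(\overline{\K}_1(\varphi_u),\overline{\K}_1(\varphi_v))=0$.

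The third statement is the hard one, and it must exploit that $\Cu_{\overline{\K}_1}$ records the whole family $\{\overline{\K}_1(I\xrightarrow{\varphi}I_\varphi)\}_{I\in\Lat_f(C(\T))}$ and not just $\overline{\K}_1(\varphi)$: the obstruction separating $\varphi_u$ and $\varphi_v$ is invisible on $\overline{\K}_1(A)$ itself (which is exactly why (ii) holds) and lives only fibered over the ideal lattice of $C(\T)$. Since $\Cu(\varphi_u)=\Cu(\varphi_v)$ we have $\epsilon_0=d_{\Cu}(\alpha_0,\beta_0)=0$, so by the defining infimum in \autoref{dfn:metricK} it suffices to produce, for every $r<1/8$, an open arc $U\subseteq\T$ for which $\Vert\mathcal{F}(\mymathbb{1}_U,\mymathbb{1}_{U_r})\Vert_{\mathfrak{d}}>4r$, i.e.\ $>1/2$. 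I would take $U$ a neighbourhood of the arc $e^{2i\pi[0,1/4]}$ and compute the two legs of the fiber diagram stage by stage: because $v=-u$, a unitary $w$ over $C_0(U)$ representing the relevant generator is sent by $\varphi_u$ into the ``first-half'' block, where it carries the de la Harpe--Skandalis data of $f$, and by $\varphi_v$ into the ``second-half'' block, where it carries that of $g$; these are not identified in the pertinent $\HH$-group, and the explicit piecewise-linear $f$ and $g$ (each of height $1/4$, placed so that the oscillation controlling $\Vert\mathcal{F}\Vert_{\mathfrak{d}}$ is $1/2$) force $\Vert\mathcal{F}(\mymathbb{1}_U,\mymathbb{1}_{U_r})\Vert_{\mathfrak{d}}\geq 1/2$ uniformly in $r$. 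I expect the genuine work to be precisely this bookkeeping: isolating which sub-object of $\Cu_{\overline{\K}_1}(A)$ carries the obstruction, evaluating the determinant there, and checking that the factor $4$ in \autoref{dfn:metricK} does not erode the bound below $1/8$. Granting $\Vert\mathcal{F}\Vert_{\mathfrak{d}}\geq 1/2$, \autoref{dfn:metricK} gives $\mathfrak{d}^*_{\Cu}(\Cu_{\overline{\K}_1}(\varphi_u),\Cu_{\overline{\K}_1}(\varphi_v))\geq 1/8$; together with (i) and (ii) this shows that $\varphi_u$ and $\varphi_v$ agree on the unitary Cuntz semigroup and on $\overline{\K}_1$ while being separated by the Hausdorffized unitary Cuntz semigroup, so they are not approximately unitarily equivalent.
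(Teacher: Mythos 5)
Your computations (i) and (ii) are correct and follow the same route as the paper: finite-stage eigenvalue matching as in \cite{C23b} gives $d_{\Cu}(\Cu(\varphi_{u_n}),\Cu(\varphi_{v_n}))\leq 1/2^{n-1}$, the fiber diagrams for $\Cu_{\K_1}$ commute because the relevant target ideals all have trivial $\K_1$, and a limit argument yields the first identity; for the second, $v=e^{i\pi}u$ together with $\K_1(\varphi_u)=\K_1(\varphi_v)=0$ and \autoref{thm:Cufinernose} gives $\mathfrak{d}=0$. (One small slip: the relative rotation class is the constant $\tfrac12$, which is not an integer multiple of $\widehat{1_A}$; it vanishes in $\HH(A)$ because $\K_0(A)\simeq\Z[\tfrac12]$, so the dyadic constants lie in the image of $\underline{\Tr}$ --- easily repaired.)

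The genuine gap is in (iii), which is the heart of the theorem. You identify the correct obstruction --- the ideal $I$ of $C(\T)$ supported on the arc $e^{2i\pi[0,1/4]}$, over which $\varphi_u$ carries the determinant data of $f$ while $\varphi_v$ carries that of $g$ --- and your reduction (exhibit, for every $r<1/8$, an open $U$ with $\Vert\mathcal{F}(\mymathbb{1}_U,\mymathbb{1}_{U_r})\Vert_{\mathfrak{d}}>4r$) is sound. But the decisive estimate $\Vert\mathcal{F}\Vert_{\mathfrak{d}}\geq 1/2$ is exactly what you leave as ``granting'', and it does not follow from the mechanism you describe without two further inputs, both of which the paper supplies. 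First, one must check that $I_{\varphi_u}=I_{\varphi_v}=A$ for this ideal (the constant eigenvalues $\lambda_{k,n}$ lie in the arc), so that both legs $\alpha_I,\beta_I$ land in the single group $\overline{\K}_1(A)$, the fiber diagram collapses to a direct comparison $\mathfrak{d}(\alpha_I,\beta_I)$, and \autoref{prop:lwrbnd} converts a lower bound $m$ for it into $\mathfrak{d}^*_{\Cu}\geq m/4$. Second, since $u$ and $v$ are only limits of the $u_n,v_n$, the determinant data of the restrictions cannot be read off directly: the paper first establishes the approximation inequality $\mathfrak{d}((\alpha_n)_I,(\beta_n)_I)\leq 2\epsilon+\mathfrak{d}(\alpha_I,\beta_I)$, so that a finite-stage lower bound suffices, and then actually evaluates the de la Harpe--Skandalis classes, $\overline{\Delta}((u_n)_I)=[t\mapsto C+4f]$ and $\overline{\Delta}((v_n)_I)=[t\mapsto C'+4g]$, invoking the half-oscillation description of the quotient norm in $\HH$ to obtain the bound $1/2$. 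Without the fullness observation, the stage-wise approximation inequality, and this explicit determinant computation, your argument asserts the conclusion of the hardest step rather than proving it; the rest of your outline would then indeed deliver $1/8$ exactly as in the paper.
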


\begin{proof}
We proceed similarly as in the previous example and the proof of \cite[4.A]{C25} to compute that \[d_{\Cu}(\Cu(\varphi_{u_{n}}),\Cu(\varphi_{v_{n}}))\leq 1/2^{n-1}.\]

Let $U\subseteq \T$ be an open set. We write $V:=U_{\frac{1}{2^{n-1}}}$. The ideals generated by $\Cu(\varphi_{u_{n}})(\mymathbb{1}_V)$ and $\Cu(\varphi_{v_{n}})(\mymathbb{1}_V)$ are either trivial or equal to $A$, and hence have trivial $\K_1$-groups. This implies that 
\[
d^*_{\Cu}(\Cu_{\K_1}(\varphi_{u_{n}}),\Cu_{\K_1}(\varphi_{v_{n}}))\leq 1/2^{n-1}.
\]
The first computation now follows from a standard argument. 

Secondly, observe that $u=e^{i\pi}v$ and hence, $\overline{\Delta}(u)=\overline{\Delta}(v)$. Combined with the fact that $\K_1(\varphi_u)$ and $\K_1(\varphi_v)$ are trivial and that $d(\HH(\varphi_u),\HH(\varphi_v))=0$ (since $d_{\Cu}(\Cu(\varphi_u),\Cu(\varphi_v))=0$, see \autoref{thm:Cufinernose}), we get that 
\[
\mathfrak{d}(\overline{\K}_1(\varphi_{u}),\overline{\K}_1(\varphi_{v}))=0.
\]

Lastly, let us write $\alpha:=\Cu_{\overline{\K}_1}(\varphi_{u})$ and $\beta:=\Cu_{\overline{\K}_1}(\varphi_{v})$. Similarly, we write $\alpha_n:=\Cu_{\overline{\K}_1}(\varphi_{u_n})$ and $\beta_n:=\Cu_{\overline{\K}_1}(\varphi_{v_n})$. Recall that $\alpha=(\alpha_0,\{\alpha_I\}_{I\in\Lat_f(A)})$, where $\alpha_0:=\Cu(\varphi_u)$ and $\alpha_I:=\overline{\K}_1(I\overset{\varphi_u}{\longrightarrow} I_{\varphi_u})$. Similarly $\beta=((\beta_0,\{\beta_I\}_{I\in\Lat_f(A)}))$ and $\alpha_n,\beta_n$ are also of this form. 

We denote $I$ to be the ideal of $C(\T)$ generated by $x:=\mymathbb{1}_{(1,e^{2i\pi/4})}\in \Lsc(\T,\overline{\N})$. Observe that $I_{\varphi_u}=I_{\varphi_v}=I_{\varphi_{u_n}}=I_{\varphi_{v_n}}=A$. Therefore, for any $y\in \Lsc(\T,\overline{\N})$ such that $x\ll y$, the fiber diagram of $(\alpha,\beta)$ at coordinates $(x,y)$ falls down to 
\[
\xymatrix{
\overline{\K}_1(I)\ar@<0,5ex>[r]^{\alpha_{I}}\ar@<-0,5ex>[r]_{\beta_I} &\overline{\K}_1(A).
}
\]
Similarly for the fiber diagram of $(\alpha_n,\beta_n)$ at coordinates $(x,y)$. 

We aim to compute a (non-zero) lower-bound $m>0$ for $\mathfrak{d}(\alpha_{I},\beta_{I})$. 
First, observe that for any $\epsilon>0$, there is $n\in \N$ big enough such that $d_U(\varphi_{u_n},\varphi_u),d_U(\varphi_{v_n},\varphi_v)<\epsilon$. Additionally, we have
\begin{align*}
\mathfrak{d}((\alpha_n)_I,(\beta_n)_I)&\leq \mathfrak{d}(\overline{\K}_1(\phi_{n\infty})\circ(\alpha_n)_I, \overline{\K}_1(\phi_{n\infty})\circ(\beta_n)_I)\\
&\leq \mathfrak{d}(\alpha_I,\overline{\K}_1(\phi_{n\infty})\circ(\alpha_n)_I) +\mathfrak{d}(\alpha_I,\beta_I)+ \mathfrak{d}(\beta_I,\overline{\K}_1(\phi_{n\infty})\circ(\beta_n)_I)\\
&\leq 2\epsilon+\mathfrak{d}(\alpha_I,\beta_I).
\end{align*}

Therefore, it is enough to find a uniform lower-bound for $\mathfrak{d}((\alpha_n)_I,(\beta_n)_I)$, for any $n\in\N$. We proceed similarly as in the previous example, to compute that $\overline{\Delta}((u_n)_I)=[t\mapsto C+ 4f]_{\HH(A)}$ and $\overline{\Delta}((v_n)_I)=[t\mapsto C'+ 4g]_{\HH(A)}$, for some constants $C,C'\in \R$. We finally deduce that $\mathfrak{d}((\alpha_n)_I,(\beta_n)_I)\geq \frac{1}{2}(\max 4(f-g)(t)-\min 4(f-g)(t))= 1/2$, and hence, that $\mathfrak{d}(\alpha_I,\beta_I)\leq 1/2$. By \autoref{prop:lwrbnd}, we conclude that $d^*_{\Cu,\mathfrak{d}}(\Cu_{\overline{\K}_1}(\varphi_{u}),\Cu_{\overline{\K}_1}(\varphi_{v}))\geq 1/8$. 
\end{proof}

\begin{qst} At the end of \cite{C25}, it is conjectured that the Hausdorffized unitary Cuntz semigroup could classify *-homomorphisms from $\C(\T)$ to a large class of $\CatCa$-algebras, containing $\AI$ and $\A\!\T$-algebras. 

As a first step towards this conjecture, we ask whether the above techniques and computations could yield a classification of *-homomorphisms from $C(\T)$ to $C([0,1])\otimes A$, where $A$ is any $\UHF$ algebra, by means of the Hausdorffized unitary Cuntz semigroup? 
\end{qst}
%=============================================================
%=============================================================
%=============================================================

\end{document}